\theoremstyle{plain}
\newtheorem{thm}{Theorem}[section]
\newtheorem{exp-thm}[thm]{Expected Theorem}
\newtheorem{exp-lem}[thm]{Expected Lemma}
\newtheorem{exp-cor}[thm]{Expected Corollary}
\newtheorem{lem}[thm]{Lemma}
\newtheorem{prop}[thm]{Proposition}    
\newtheorem{cor}[thm]{Corollary}
\newtheorem{defin}[thm]{Definition}
\theoremstyle{remark}                 
\newtheorem{remark}[thm]{Remark}
\newcommand{\R}{\mathbb{R}}
\newcommand{\N}{\mathbb{N}}
\newcommand{\G}{\mathbb{G}}
\newcommand{\scal}[2]{\langle {#1} , {#2}\rangle}
\newcommand{\Hnabla}{\nabla_{{\G}}}
\newcommand{\divg}{\mathrm{div}_{{\G}}}
\newcommand{\bq}{\mathbf{q}}
\newcommand{\bxi}{\boldsymbol{\xi}}
\newcommand{\bzeta}{\boldsymbol{\zeta}}
\newcommand{\e}{\varepsilon}
\newcommand{\osc}{\mathrm{osc}}
\renewcommand{\leq}{\leqslant}
\renewcommand{\le}{\leqslant}
\renewcommand{\geq}{\geqslant}
\renewcommand{\ge}{\geqslant}
\DeclareMathOperator*{\dist}{dist}
\def\cleardoublepage{\clearpage\if@twoside \ifodd\c@page\else
	\hbox{}
	\thispagestyle{empty}
	\newpage
	\if@twocolumn\hbox{}\newpage\fi\fi\fi}
\numberwithin{equation}{section}
\begin{document}
	
	\title{Lipschitz regularity for almost minimizers of \\ a one-phase $p$-Bernoulli-type functional \\ in Carnot Groups of step two}
	
	\author{Fausto Ferrari}
	\address{Fausto Ferrari: Dipartimento di Matematica Universit\`a di Bologna Piazza di Porta S.Donato 5  40126, Bologna-Italy}
	\email{fausto.ferrari@unibo.it }

	\author{Enzo Maria Merlino}
	\address{Enzo Maria Merlino: Dipartimento di Matematica Universit\`a di Bologna Piazza di Porta S.Donato 5 40126, Bologna-Italy}
	\email{enzomaria.merlino2@unibo.it }

	\date{\today}
        \thanks{The authors are members of Gruppo Nazionale per l'Analisi Matematica, la Probabilit\'a e le loro Applicazioni (GNAMPA) of the Istituto Nazionale di Alta Matematica (INdAM): F.F. has been partially supported by the INDAM-GNAMPA research project 2023 "Equazioni completamente non lineari locali e non local" and E.M.M. is partially supported by the GNAMPA research project 2023 "Equazioni nonlocali di tipo misto e geometrico". The authors are also supported by INDAM-GNAMPA project 2024 "Free boundary problems in noncommutative structures and degenerate operators" and PRIN 2022 7HX33Z - CUP J53D23003610006, "Pattern formation in nonlinear phenomena".}
	\subjclass[2020]{Primary 35R35, 35R03; Secondary 35J92, 35R65.}
	\keywords {Almost minimizers, Carnot groups, One-phase free boundary problem.}
	
	\begin{abstract} 
            In this paper, in a Carnot group $\G$ of step $2$ and homogeneous dimension $Q$, we prove that almost minimizers of the (horizontal) one-phase $p$-Bernoulli-type functional 
            $$ J_p(u,\Omega):=\int_{\Omega}\Big( |\nabla_{\mathbb{G}} u(x)|^p+\chi_{\{u>0\}}(x)\Big)\,dx$$
            whenever $p>p^\#:=\frac{2Q}{Q+2}$, are locally Lipschitz continuous with respect Carnot-Carath\'eodory distance on $\G$. This implies an H\"older continuous regularity from an  Euclidean point of view.        
        \end{abstract}
\maketitle	\vspace{-2.4em}
\tableofcontents
\section{Introduction}
In this paper, we study the regularity of the almost minimizers of a one-phase (horizontal) $p$-Bernoulli-type functional in Carnot groups of step two. We are interested in the regularity properties of almost minimizers associated with degenerate functionals in noncommutative structures, endowed by a rich geometry, as in the case of the Carnot groups of step $2$, started in \cite{FFM} when $p=2$.

The classical motivation to study this type of free boundary problem arises from flame propagation and jet flows models \cites{Ack77,Beu57,BCN}, and it is also related to shape optimization problems, see \cite{BuVel}. 

In the Euclidean setting, regularity issues concerning minimizers of such functionals were introduced in the pioneering work of Alt and Caffarelli, see \cite{AC}. Further contributions to this topic appeared in \cites{CJK04,DeJe09,JeSa15}  and we refer to \cites{CSbook,Vel} and the references within for a comprehensive overview of the subject. More recently, regularity of almost minimizes and their free boundaries were also studied in \cites{DT,DET}. A different approach is proposed in \cite{DS}, based on nonvariational techniques.

The research about the nonlinear Euclidean setting of $p$-Bernoulli-type functional, ruled by $p$-Laplace operator, appeared in \cite{DFFV}, where the authors generalize the contribution introduced in \cite{DS} without applying mean value properties to $p$-harmonic functions. However, the tools introduced in \cite{DFFV} have been useful in \cite{FFM}, facing the linear framework given by (horizontal) Bernoulli-type functional in Carnot groups of step 2, where smooth vector fields, generating the first algebra stratum, are involved. In fact, in this case, the norm of the intrinsic gradient of an (intrinsic) harmonic function is not an (intrinsic) subharmonic function, contrary to what happens in the Euclidean case. 

Hence, in this paper, combing the approach of \cite{FFM} and the strategy of \cite{DFFV}, we manage to prove the (intrinsic) Lipschitz continuous behaviour of almost minimizers of  $p$-Bernoulli-type functionals in Carnot groups of step 2. Dealing with the nonlinear framework, we pay particular attention to the noncommutative setting we are considering, since due to the linearization argument employed, we need more general regularity estimates with respect to the ones applied in \cite{FFM}.

The mathematical setting we need will be presented later on. We refer to Section \ref{sec:preliminary} for the main notation and definitions. Nevertheless, in order to introduce the main result of the paper, we recall here the definition of {\it almost minimizer}, we deal with.

Let $\G$ be a Carnot Group of step 2. Assume that $\Omega\subset\G$ is a measurable domain and $p>1$ is fixed.  We define the following energy functional
	\begin{equation}\label{def-J-p}
		J_p(u,\Omega):=\int_{\Omega}\Big( |\Hnabla u(x)|^p+\chi_{\{u>0\}}(x)\Big)\,dx
	\end{equation}
for all $u\in HW^{1,p}(\Omega)$, $u\ge0$ a.e. in $\Omega$, where $\Hnabla u$ is the so-called \emph{horizontal gradient} of $u$. 

The condition $u\geq 0$ a.e. corresponds, in the context of free boundary problems, to consider \emph{one-phase} problems (solutions which may change sign are related to \emph{two-phase} problems).

In this paper, we focus on regularity properties for almost minimizers of $J_p$ defined in \eqref{def-J-p}. The notion of almost minimizers is well-known in the Calculus of Variations. In particular, the notion of almost minimizers associated with energy functionals was introduced by Anzellotti in \cite{Anz83}. 
Concerning Bernoulli-type energy functionals, almost minimizers have been studied in \cites{DT,DEST,DEST23,DS} in the Euclidean setting and in \cite{FFM} within Carnot groups. In the setting of this work, the precise notion of almost minimizers for $J_p$ we are dealing with is the following one.
\begin{defin}\label{def-AM} Let $\kappa\ge0$, $\beta>0$ be constants and let  $\Omega\subseteq\G$ be an open set. We say that $u\in HW^{1,p}(\Omega)$ is an almost minimizer for $J_p$ in $\Omega$, with constant $\kappa$ and exponent $\beta$, if $u\geq 0$ $\mathcal{L}^n$-almost everywhere in $\Omega$ and
	\begin{equation}\label{inequality-of-J-p-u-alm-min}
			J_p(u,B_\varrho(x))\leq(1+\kappa \varrho^\beta)J_p(v,B_\varrho(x)),
	\end{equation}
		for every (intrinsic) ball $B_\varrho(x)$ such that $\overline{B_\varrho(x)}\subset \Omega$ and for every $v\in HW^{1,p}(B_\varrho(x))$ such that $u-v\in HW^{1,p}_0(B_\varrho(x))$.
\end{defin}
As well as in \cite{FFM}, the rough idea of the definition of almost minimizes is that, locally, the energy is not necessarily minimal among all competitors but \emph{almost minimal}, in the sense that it cannot decrease by a factor more than $1+\kappa \varrho^\beta$, where $\varrho$ is the scale of the localization. Thus, almost minimizers can be considered as perturbations of minimizers and such perturbations have a smaller contribution at small scales. In general, almost minimizers only satisfy a variational inequality but they are not solutions to some partial differential equations. Thus, the main problem in facing their regularity properties is the lack of monotonicity formulas, contrary to what usually happens to minimizers.

Now, we are in position to state our main regularity result. Here the integer $Q$ is the \emph{homogeneous dimension} of $\G$, see Section \ref{sec:preliminary}.
\begin{thm}\label{theor-Lipsch-contin-alm-minim}
    Suppose that $p>p^\#:=\frac{2Q}{Q+2}$. Let $u$ be an almost minimizer for  $J_p$ in  $B_1$ with constant  $\kappa$ and exponent  $\beta$. Then,
		\[\left\|\Hnabla u\right\|_{L^{\infty}(B_{1/2})}\le C\Big(\left\|\Hnabla u\right\|_{L^p(B_1)}+1\Big),\]
    where  $C>0$ is a constant depending on $\kappa$, $\beta$, $Q$ and $p$.
		
    In addition, $u$ is uniformly Lipschitz continuous in a neighborhood of  $\left\{u=0\right\}$, namely if  $u(0)=0$ then
		\[\left|\Hnabla u\right|\le C\quad \mbox{in }B_{r_0},\]
    for some  $C>0$, depending only on  $Q$, $p$, $\kappa$ and  $\beta$, and  $r_0\in(0,1)$, depending on  $Q$, $p$, $\kappa$, $\beta$ and  $\|\Hnabla u\|_{L^p(B_1)}$.
\end{thm}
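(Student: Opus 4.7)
The plan is to follow a dyadic comparison-and-iteration scheme: approximate $u$ on small CC-balls by a $\G$-$p$-harmonic replacement, exploit interior regularity of the latter to obtain a one-scale decay estimate for $\fint|\Hnabla u|^p$, and then iterate. This is the nonlinear Carnot-group analogue of the strategy of \cite{DFFV} (Euclidean, nonlinear) combined with the intrinsic techniques of \cite{FFM} (Carnot, linear $p=2$). The almost-minimality inequality \eqref{inequality-of-J-p-u-alm-min} is the only variational information available, so the whole argument must be built by competitor comparisons rather than by a PDE.

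Step 1 (comparison). For a fixed ball $B_r(x_0)\Subset B_1$ I would take $h\in HW^{1,p}(B_r(x_0))$ solving
\[\divg\bigl(|\Hnabla h|^{p-2}\Hnabla h\bigr)=0,\qquad h-u\in HW^{1,p}_0(B_r(x_0)).\]
Since $h$ is admissible in \eqref{inequality-of-J-p-u-alm-min} and minimises the pure $p$-Dirichlet energy among such competitors, a rearrangement produces
\[\int_{B_r(x_0)}|\Hnabla u|^p\,dx\leq\int_{B_r(x_0)}|\Hnabla h|^p\,dx+\kappa r^\beta\int_{B_r(x_0)}|\Hnabla h|^p\,dx+Cr^Q.\]
Coupled with the monotonicity inequality for the $p$-Laplacian (for $p\geq 2$) or its weighted, degenerate version (for $1<p<2$), this yields the closeness estimate
\[\int_{B_r(x_0)}|\Hnabla(u-h)|^p\,dx\leq C\kappa r^\beta\int_{B_r(x_0)}|\Hnabla u|^p\,dx+Cr^Q.\]

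Step 2 (decay and iteration). Using the interior Lipschitz-type bound $\sup_{B_{\tau r}}|\Hnabla h|^p\leq C\fint_{B_r}|\Hnabla h|^p$ available for $\G$-$p$-harmonic functions in step-$2$ Carnot groups, together with Step 1, one obtains, for every $\tau\in(0,1)$ small and every $r$ small,
\[\fint_{B_{\tau r}(x_0)}|\Hnabla u|^p\,dx\leq C\fint_{B_r(x_0)}|\Hnabla u|^p\,dx+C\tau^{-Q}\Bigl(\kappa r^\beta\fint_{B_r(x_0)}|\Hnabla u|^p\,dx+1\Bigr).\]
Iterating along the geometric scales $r_k=\tau^k$ and using $\sum_k\kappa r_k^\beta<\infty$, a standard telescoping argument produces
\[\fint_{B_r(x_0)}|\Hnabla u|^p\,dx\leq C\bigl(\|\Hnabla u\|_{L^p(B_1)}^p+1\bigr)\]
uniformly for $x_0\in B_{1/2}$ and $r$ small; Lebesgue differentiation on the doubling metric measure space $(\G,d_{\mathrm{CC}})$ then upgrades this to the claimed $L^\infty$ bound. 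For the second assertion, the iteration is restarted at a free-boundary point $x_0$ with $u(x_0)=0$: thanks to Poincar\'e's inequality one controls $\fint|u|^p$ by $\fint|\Hnabla u|^p$, and combining with the $\chi_{\{u>0\}}$-term in the energy one closes the estimate with a universal constant, provided the starting scale $r_0$ is chosen depending on $\|\Hnabla u\|_{L^p(B_1)}$.

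The main obstacle is the interaction between the noncommutative horizontal geometry of $\G$ and the degenerate/singular character of the $p$-Laplacian. Lipschitz regularity for $\G$-$p$-harmonic functions in step-$2$ Carnot groups is weaker than in the Euclidean or linear ($p=2$) setting, so a more robust use of the available $C^{1,\alpha}$-type estimates (sharper than what sufficed in \cite{FFM}) is needed, and this is the technical core of the work. A parallel difficulty arises in the sublinear range $1<p<2$, where the convexity inequality employed in Step 1 must be replaced by its weighted degenerate form; it is precisely the requirement that the resulting remainder terms scale correctly during the iteration which forces the Sobolev-type threshold $p>p^\#=\tfrac{2Q}{Q+2}$, corresponding to the relation $(p^\#)^\ast=2$ for the Carnot Sobolev conjugate.
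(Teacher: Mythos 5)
Your proposal has the right ingredients (the $(\G,p)$-harmonic replacement, comparison of energies using almost-minimality, interior regularity of $\G$-$p$-harmonic functions, the Sobolev threshold $p^\#$), but the logical core of the iteration is missing, and what you have written cannot close. The problem is Step 2: the interior estimate $\sup_{B_{\tau r}}|\Hnabla h|^p\le C\fint_{B_r}|\Hnabla h|^p$ comes with a fixed constant $C\ge 1$, so your one-scale inequality reads
\[
a_{k+1}\leq C\,a_k+C\tau^{-Q}\bigl(\kappa r_k^\beta a_k+1\bigr),\qquad a_k:=\fint_{B_{\tau^k}}|\Hnabla u|^p,
\]
with a leading coefficient $C\ge 1$. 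A telescoping argument needs a contraction $a_{k+1}\le \mu a_k+D_k$ with $\mu<1$, and you do not have it; as written, the iteration would let the averages grow geometrically and produce no bound at all. This is not a cosmetic issue: it is exactly the obstruction that forces the architecture of the paper's proof.

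The paper handles this with a \emph{dichotomy} (Proposition \ref{dic}): at each scale either the $L^p$-average of the horizontal gradient drops by a fixed factor (which is what a telescoping argument actually needs), or the gradient is $\varepsilon$-close to a nontrivial constant horizontal section $\mathbf q$ with $|\mathbf q|\approx a$. In the second alternative one cannot gain from the comparison with the replacement directly; instead the paper linearises: writing $A(x)=\int_0^1 D_zF\bigl(t\Hnabla v+(1-t)\mathbf q\bigr)\,dt$ with $F(z)=|z|^{p-2}z$, the replacement $v$ shifted by the $\G$-affine function of slope $\mathbf q$ solves a uniformly elliptic divergence-form subelliptic equation with H\"older coefficients (Lemma \ref{lemma:unifell}, eq.\ \eqref{unif-ellip-equat-lemma-dich-improved}), and the $C^{1,\gamma}$ regularity theory developed in Section \ref{sec:reg} (Theorem \ref{thm:c1au}) then gives decay of the flatness $\fint|\Hnabla u-\mathbf q|^p$ at smaller scales (Lemma \ref{lemma-second-alternative-dichotomy-improved}). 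Iterating that yields a genuine $C^{1,\gamma}$ estimate (Corollary \ref{corollary-lemma-second-alternative-dichotomy-improved}), and the final proof in Section \ref{sec:lip1} combines the two alternatives via a careful bookkeeping of the scales in which each one holds. Your proposal omits the dichotomy and the linearisation entirely, which are precisely the technical core of the argument; without them the comparison estimate of Step 1, while correct, does not lead anywhere. Finally, your attribution of the threshold $p>p^\#$ to $(p^\#)^*=2$ is numerically correct but not quite the mechanism: in the paper the threshold enters in the improvement lemma for $1<p<2$ as the condition $\frac{Qp^2}{2(Q-p)}>p$ needed to reabsorb the term $\varepsilon^{(p+\delta)p/2}a^p$ (with $\delta=p^2/(Q-p)$ from the Poincar\'e--Sobolev measure estimate of $\{u=0\}$) into $\varepsilon^p a^p$; see the discussion around \eqref{condizione-su-p}.
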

Hence, the main contribution of our paper concerns the intrinsic Lipschitz regularity of almost minimizers to \eqref{def-J-p}. Such a result extends the regularity result proved in \cite{FFM}, only obtained in the case of $p=2$.

In particular, to prove the intrinsic Lipschitz regularity of the almost minimizer stated in Theorem \ref{theor-Lipsch-contin-alm-minim}, we have to face a double difficulty: the noncommutative structure of the geometry and the nonlinear form of the studied functional. More precisely, the vector fields generating the first stratum of the Lie algebra $X_1,\ldots, X_{m_1}$, in general, have no trivial commutator, i.e.
$$
\left[X_i,X_j\right]\not=0 \quad \Leftrightarrow \quad X_iX_j\not=X_jX_i, \quad\text{for every } i,j\in \{1,\ldots,m_1\}, \text{ if } i\not=j.
$$
As a consequence, denoting by $\mathcal{L}$ the positive sub-Laplacian on $\G$, it results
\begin{equation}\label{noncommuta}
X_i\mathcal{L}\not=\mathcal{L}X_i,
\end{equation} for all $i=1,\ldots,m_1$. The lack of commutativity, encoded in \eqref{noncommuta}, already faced in the linear case discussed in \cite{FFM}, translates into a lack of $\G$-harmonicity of functions $X_iu$, for $i\in \{1,\ldots,m_1\}$. Consequently, the approach adopted in \cite{DS} cannot be straightforwardly generalized. To overcome this problem, we apply some regularity estimates proved in \cite{CM22}. In addition, in this paper, we have to deal also with the nonlinear behaviour of the $p$-sub-Laplacian associated with $p$-Bernoulli functional \eqref{def-J-p}. In particular, regularity estimates of \cite{CM22}, are no longer sufficient. To apply the linearization argument performed in \cite{DFFV}, regularity estimates for subelliptic equations with variable coefficients, as ones studied in \cite{SS} in the case of the Heisenberg group,  are needed. Therefore, to generalize the results of \cite{DFFV} to the framework of the Carnot groups of step two, such regularity estimates represent a key point and their treatment appears in Section \ref{sec:reg}. We apply techniques, introduced in \cites{min,DM} in the Euclidean setting, and then employed in \cite{SS} in the case of the Heisenberg group, based on comparison estimates of corresponding equations with frozen coefficient. However, as well as in \cite{SS}, in our subelliptic setting, extra terms coming from nonvanishing commutators of the vector fields generating the first algebra stratum appear. Therefore, estimates are not always as strong as those in the Euclidean setting but they are still enough for our purposes.

As we already point out in Theorem \ref{theor-Lipsch-contin-alm-minim}, we deduce that almost minimizes $u$ for $J_p$, in an open domain $\Omega\subset\G$, are  locally intrinsic Lipschitz functions, i.e. there exists a constant $C>0$ such that for every $x,y\in B \subset\subset \Omega$
$$
|u(x)-u(y)|\leq Cd_{c}(x,y),
$$
where $d_{c}$ is the Carnot-Charath\'eodory distance, see Definition \ref{def_dcc} in the Section 2. However, this Lipschitz intrinsic regularity only implies H\"older continuous regularity, from the Euclidean point of view, since the Carnot-Charath\'eodory distance is not equivalent to the Euclidean one, see for instance \cite{BLU}*{Proposition 5.15.1.}. More precisely, denoting by $d_E$ the Euclidean distance on $\G\equiv\R^n$, for every compact set $K\subset\G$, there exists a constant $c_K>0$ such that
$$
(c_K)^{-1}\,d_E(x,y)\leq d_{c}(x,y)\leq c_K\,d_E^{1/k}(x,y),
$$
for all $x,y \in K$, where $k$ is the step of the Carnot group. In this paper, we always consider the case of $k=2$.

We also remark that the approach to the regularity theory of free boundary problems, based on monotonicity formulas, is well understood in the Euclidean setting. For example, it is well known that the so-called Alt-Caffarelli-Fredmann monotonicity formula is one of the key tools used to prove Lipschitz continuity of minimizer to two-phase problems, see \cite{ACF} and also \cite{CSbook}. Nevertheless, these techniques appear difficult to extend  to the noncommutative setting of Carnot groups. Indeed, as recently proved in \cites{FF,FG}, an intrinsic Alt-Caffarelli-Friedman monotonicity formula, written as the natural counterpart to the classical Euclidean one, seems to fail. Furthermore, the viscosity approach developed in the nonlinear framework, see for instance \cites{DP05,GleRic,FL,FL2}, appears more challenging due to the theoretical existence of characteristic points on the free boundary. Nevertheless, with our approach, we overcome these difficulties without relying on monotonicity formulas by dealing with almost minimizers inspired by the \cites{DS,DFFV,FFM}, where Lipschitz regularity has been showed. 

The paper is organized as follows. In Section \ref{sec:preliminary} we introduce the main notation and tools we rely on all the paper. Then in Section \ref{sec:har_rep} we deal with the $(\G,p)$-harmonic proving some estimates you use in our argument. In Section \ref{sec:dichotomy} we prove some of the key tools of the proof associated with a dichotomy procedure of \cite{DS}. Next, in Section \ref{sec:reg} we study some regularity results concerning subellipic equations with variable coefficient, this is a key step to perform a linearization argument as in \cite{DFFV}. Section \ref{sec:impr} concludes the dichotomy argument, here we present an \emph{improvement} of the result in Section \ref{sec:dichotomy} using the regularity estimated obtained in Section \ref{sec:reg}. Finally, in Section \ref{sec:lip1} we conclude with the proof of Theorem \ref{theor-Lipsch-contin-alm-minim}.

\section{Notation and preliminary results}\label{sec:preliminary}

\subsection{Carnot Groups}

A connected and simply connected Lie group $(\G,\cdot)$  (in general non-abelian) is said to be a  {\em Carnot group  of step $k$} if its Lie algebra ${\mathfrak{g}}$ admits a {\it $k$-step stratification}, i.e. there exist linear subspaces $V_1,...,V_k$ such that
\begin{equation}\label{stratificazione}
{\mathfrak{g}}=V_1\oplus\ldots\oplus V_k,\quad [V_1,V_i]=V_{i+1},\quad
V_k\neq\{0\},\quad V_i=\{0\}{\textrm{ if }} i>k,
\end{equation}
where $[V_1,V_i]$ is the subspace of ${\mathfrak{g}}$ generated by the commutators $[X,Y]$ with $X\in V_1$ and $Y\in V_i$. The first layer $V_1$, the so-called {\sl horizontal layer},  plays a key role in the theory since it generates all the algebra $\mathfrak g$ by commutation. We point out that a stratified Lie algebra can admit more than one stratification, however, the stratification turns out to be unique up to isomorphisms, therefore, the related Carnot group structure is essentially unique (see \cite{LD16}*{Proposition 1.17}). Note that when $k=1$ the group $\G$ is Abelian, we return to the Euclidean situation.

Setting $m_i=\dim(V_i)$ and $h_i=m_1+\dots +m_i$, with $h_0=0$, for $i=1,\dots,k$ (so that $h_k=n=\dim \mathfrak g=\dim \G$), we choose a basis $e_1,\dots,e_n$ of $\mathfrak g$ adapted to the stratification , that is such that
$$e_{h_{j-1}+1},\ldots,e_{h_j}\;\text {is a basis of}\; V_j\;\text{for each}\; j=1,\ldots, k.$$ 
Let $X=\{X_1,\dots,X_{n}\}$ be the family of left-invariant vector fields such that $X_i(e)=e_i$, $i=1,\dots,n$, where $e$ is the identity of $(\G,\cdot)$. 
Thanks to \eqref{stratificazione}, the subfamily $\{X_1,\dots,X_{m_1}\}$ generates by commutations all the other vector fields, we will refer to $X_1,\ldots,X_{m_1}$ as {\it generating vector fields} of $(\G,\cdot)$.  

The map $X\mapsto X(e)$, that associated with a left-invariant vector field $X$ its value in $e$, defines an isomorphism from $\mathfrak g$ to $T\G_e$ (in turn identified with $\R^n$). We systematically use these identifications. Furthermore, by the assumption that $\G$ be simply connected, the exponential map is a global real-analytic diffeomorphism from $\mathfrak g$ onto $\G$ (see e.g. \cites{V,CGr}), so any $x\in\G$ can be written in a unique way as $x=\exp(x_1X_1+\dots+x_nX_n)$. Using these {\it exponential coordinates}, we identify $x$ with the $n$-tuple $(x_1,\ldots,x_n)\in\R^n$ and we identify $\G$ with $(\R^n,\cdot)$ where the explicit expression of the group operation $\cdot$ is determined by the Campbell-Hausdorff formula (see e.g. \cites{BLU,FS}). In this coordinates $e=(0,\dots,0)$ and $(x_1,\dots,x_n)^{-1}=(-x_1,\dots,-x_n)$, and the adjoint operator in $L^2(\G)$ of $X_j$, $X_j^{*}$, $j=1,\dots,n$, turns out to be $-X_j$ (see, for instance,\cite{FSSC03}*{Proposition 2.2}). Moreover, if $x\in \G$ and $i=1,\ldots,k$, we set $x^{(i)}:= (x_{h_{i-1}+1},\ldots,x_{h_{i}})\in \R^{m_i}$, so that we can also identify $x$ with $[x^{(1)},\ldots,x^{(k)}]\in \R^{m_1}\times \ldots \times \R^{m_k}=\R^n$.

Two important families of automorphism of $\G$ are the so-called \emph{intrinsic translations} and the \emph{intrinsic dilations} of $\G$. For any $x\in\G$, the {\it (left) translation} $\tau_x:\G\to\G$ is defined as 
$$ z\mapsto\tau_x z:=x\cdot z. $$
For any $\lambda >0$, the {\it dilation} $\delta_\lambda:\G\to\G$, is defined as 
\begin{equation}\label{dilatazioni}
\delta_\lambda(x_1,...,x_n)=(\lambda^{\alpha_1}x_1,...,\lambda^{\alpha_n}x_n),
\end{equation} 
where $\alpha_i\in\N$ is called {\it homogeneity of the variable} $x_i$ in $\G$ (see \cite{FS} Chapter 1) and is defined as
\begin{equation}\label{omogeneita2}
\alpha_j=i \quad\text {whenever}\; h_{i-1}+1\leq j\leq h_{i},
\end{equation}
hence $1=\alpha_1=...=\alpha_m<\alpha_{m+1}=2\leq...\leq\alpha_n=k.$

From the definition \eqref{dilatazioni}, one easily verifies the following properties of intrinsic dilations.
\begin{lem}\label{DilationsProperties}
For all  $\lambda, \mu >0$ one has:
\begin{enumerate}
\item[(1)]  $\delta_{1} = \mathrm{id}_\G$;
\item[(2)] $\delta^{-1}_{\lambda} = \delta_{\lambda^{-1}}$;
  \item[(3)]
  $\delta_{\lambda}\circ \delta_{\mu} =\delta_{\lambda \mu}$;
\item[(4)] for every $p, p'\in \G$ one has $\delta_\lambda(p) \circ \delta_\lambda(p') = \delta_\lambda(p\cdot p')$.
\end{enumerate}
\end{lem}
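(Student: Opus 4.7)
The plan is to verify the four properties directly from the component-wise definition of the dilation in exponential coordinates, namely $\delta_\lambda(x_1,\ldots,x_n) = (\lambda^{\alpha_1}x_1,\ldots,\lambda^{\alpha_n}x_n)$, and to reserve the genuine work for property (4), which is the only nontrivial assertion because it involves the group law.

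For (1), setting $\lambda=1$ gives $\lambda^{\alpha_i}=1$ for all $i$, so $\delta_1(x)=x$. For (2), composing with $\delta_{\lambda^{-1}}$ in coordinates multiplies each entry by $\lambda^{\alpha_i}\lambda^{-\alpha_i}=1$, hence $\delta_\lambda\circ\delta_{\lambda^{-1}} = \mathrm{id}_\G$, which combined with (3) (applied to $\mu=\lambda^{-1}$) gives (2). For (3), the $i$-th coordinate of $\delta_\lambda(\delta_\mu(x))$ is $\lambda^{\alpha_i}(\mu^{\alpha_i}x_i) = (\lambda\mu)^{\alpha_i}x_i$, which is exactly the $i$-th coordinate of $\delta_{\lambda\mu}(x)$. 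These three items require no input beyond the definition of $\delta_\lambda$ and the naturality of powers.

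The substantive item is (4). My approach is to invoke the Campbell–Hausdorff formula: in exponential coordinates the group law $p\cdot p'$ is a polynomial map whose $j$-th component is a sum of iterated bracket expressions obtained from $p^{(1)},\ldots,p^{(k)}$ and $(p')^{(1)},\ldots,(p')^{(k)}$. The key structural fact, which follows from the stratification \eqref{stratificazione} and the homogeneity assignment \eqref{omogeneita2}, is that every monomial contributing to the $j$-th component is $\alpha_j$-homogeneous with respect to the weights $\alpha_i$: an iterated bracket of elements in $V_{i_1},\ldots,V_{i_s}$ lands in $V_{i_1+\cdots+i_s}$, so if it contributes to the coordinate $x_j$ with $h_{i-1}+1\le j\le h_i$, then $i_1+\cdots+i_s=i=\alpha_j$. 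Consequently, replacing $p$ by $\delta_\lambda(p)$ and $p'$ by $\delta_\lambda(p')$ multiplies each such monomial by $\lambda^{\alpha_j}$, which is exactly the effect of applying $\delta_\lambda$ to the product $p\cdot p'$.

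The main obstacle, and the only step beyond bookkeeping, is the homogeneity of the Campbell–Hausdorff polynomial just described; I would cite \cite{BLU} or \cite{FS} for the precise statement that the group law is a polynomial of weighted degree $\alpha_j$ in each component, since the paper already references these sources when introducing exponential coordinates. Once this is in hand, (4) reduces to a one-line comparison of $\alpha_j$-homogeneous polynomial expressions, and the lemma is complete.
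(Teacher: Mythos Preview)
Your proposal is correct. The paper does not actually supply a proof of this lemma; it merely states that the properties are easily verified from the definition \eqref{dilatazioni}. Your argument for (1)--(3) is the obvious coordinate check, and your treatment of (4) via the weighted homogeneity of the Campbell--Hausdorff polynomial is exactly the content of Proposition~\ref{group_operation} (which appears immediately after this lemma in the paper and states precisely that $\mathcal{Q}_i(\delta_\lambda x,\delta_\lambda y)=\lambda^{\alpha_i}\mathcal{Q}_i(x,y)$), so your approach is the natural one and aligns with the paper's setup.
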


By left translation, the horizontal layer defines a subbundle of the tangent bundle $T\G$ over $\G$: the subbundle of the tangent bundle spanned by the family of vector fields $X=(X_1,\dots,X_{m_1})$ plays a crucial role in the theory, it is called the {\it horizontal bundle} $H\G$; the fibres of $H\G$ are 
$$ H\G_x=\mbox{span }\{X_1(x),\dots,X_{m_1}(x)\},\qquad x\in\G.$$
A subriemannian structure is defined on $\G$, endowing each fibre of $H\G$ with a scalar product $\scal{\cdot}{\cdot}_{x}$ and with a norm $\vert\cdot\vert_x$ that make the basis $X_1(x),\dots,X_{m_1}(x)$ an orthonormal basis. That is if $v=\sum_{i=1}^{m_1} v_iX_i(x)=(v_1,\dots,v_{m_1})$ and $w=\sum_{i=1}^{m_1} w_iX_i(x)=(w_1,\dots,w_{m_1})$ are in $H\G_x$, then $\scal{v}{w}_{x}:=\sum_{j=1}^{m_1} v_jw_j$ and $|v|_x^2:=\scal{v}{v}_{x}$.

The sections of $H\G$ are called {\it horizontal sections}, and for any $x\in\G$, a vector of $H\G_x$ is an {\it horizontal vector} while any vector in $T\G_x$ that is not horizontal is a vertical vector. Each horizontal section is identified by its canonical coordinates with respect to this moving frame $X_1(x),\dots,X_{m_1}(x)$. This way, an horizontal section $\varphi$ is identified with a function $\varphi =(\varphi_1,\dots,\varphi_{m_1}) :\R^n \rightarrow\R^{m_1}$. When dealing with two horizontal sections $\phi$ and $\psi$, we drop the index $x$ in the scalar product and in the norm. 

We collect in the following results some properties of the group operation and of the canonical vector fields, see \cite {BLU}.

\begin {prop} The group operation has the following form
   $$ x \cdot y = x + y + \mathcal {Q} (x, y), \quad \text{for all } x, y \in \mathbb {G}$$
where $ \mathcal {Q} = (\mathcal {Q} _1, \ldots, \mathcal {Q} _n): \mathbb {G}\times \mathbb {G} \rightarrow \mathbb {G}$
and every $ \mathcal {Q} _i $, for $ i = 1, \ldots, n $, is a homogeneous polynomial of degree $ \alpha_i $ respecting the intrinsic dilations of $ \mathbb {G} $, that is
   $$ \mathcal {Q} _i (\delta_ \lambda x, \delta_ \lambda y) = \lambda ^ {\alpha_i} \mathcal {Q} _i (x, y) \quad \text{for all } x, y \in \mathbb {G}\quad \text{and} \quad \text{for } \lambda> 0. $$ Moreover, for all $x, y \in \mathbb {G} $ we have
\begin {itemize}
\item [(i)] $ \mathcal {Q} $ is antisymmetric, that is
    $$ \mathcal {Q} _i (x, y) = - \mathcal {Q} _i (-y, -x), \quad \text{for } i = 1, \ldots, n. $$
\item [(ii)] $$ \mathcal {Q} _1 (x, y) = \ldots = \mathcal {Q} _ {m_1} (y, x) = 0 $$
$$ \mathcal {Q} _i (x, 0) = \mathcal {Q} _i (0, y) = 0 \text{ and } \mathcal {Q} _i (x, x) = \mathcal {Q} _i (x, -x) = 0, \quad \text { for } m_1 <i \leq n $$
$$ \mathcal {Q} _i (x, y) = \mathcal {Q} _i (x_1,, x _ {{h_j} -1}, y_1, \ldots, y _ {{h_j} -1}), \quad \text { if } 1 <j \leq k \text { and } i \leq h_j $$
\item [(iii)] $$ \mathcal {Q} _i (x, y) = \sum_ {k, h} {\mathcal {R} _ {h, k} ^ i (x, y) (x_ky_h- x_hy_k)} $$ where the functions $ \mathcal {R} _ {h, k} ^ i $ are homogeneous polynomials of degree $ \alpha_i- \alpha_k- \alpha_h $ which respect the intrinsic dilations and the sum is extended to all $ h $ and $ k $ such that $ \alpha_h + \alpha_k \leq \alpha_i $.
\end {itemize} \label {group_operation}
\end {prop}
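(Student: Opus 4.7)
The plan is to derive the proposition as a direct consequence of the Baker--Campbell--Hausdorff (BCH) formula, combined with the grading induced by the stratification of $\mathfrak{g}$. Since $\G$ is simply connected and nilpotent of step $k$, the exponential map is a global diffeomorphism, so the group operation in exponential coordinates reads
\[
x\cdot y \;=\; x+y+\tfrac{1}{2}[x,y]+\tfrac{1}{12}\bigl([x,[x,y]]-[y,[x,y]]\bigr)+\cdots,
\]
and nilpotency of step $k$ forces this series to terminate after finitely many terms. Setting $\mathcal{Q}(x,y):=x\cdot y-x-y$, each $\mathcal{Q}_i$ becomes a polynomial because iterated Lie brackets are multilinear in the adapted basis $e_1,\ldots,e_n$.

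For the homogeneity statement, the key algebraic input is $[V_i,V_j]\subseteq V_{i+j}$ from \eqref{stratificazione}. An iterated bracket of basis elements $e_{j_1},\ldots,e_{j_\ell}$ therefore lies in the stratum of degree $\alpha_{j_1}+\cdots+\alpha_{j_\ell}$. Every monomial contributing to $\mathcal{Q}_i$ must accordingly have total weighted degree $\alpha_i$, where the coordinate $x_r$ (or $y_r$) is weighted by $\alpha_r$. Substituting $\delta_\lambda x$ and $\delta_\lambda y$ multiplies each such monomial by $\lambda^{\alpha_i}$, giving the required scaling identity. As a byproduct, for $i\leq m_1$ no bracket term can contribute (brackets lie in $V_2\oplus\cdots\oplus V_k$), so $\mathcal{Q}_i\equiv 0$; for $i>m_1$, each bracket factor has homogeneity at least $1$, so only coordinates of homogeneity strictly less than $\alpha_i$ can appear, giving the support statement in (ii).

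For (i), I will combine $z^{-1}=-z$ (valid in exponential coordinates on a nilpotent group) with $(x\cdot y)^{-1}=y^{-1}\cdot x^{-1}$, which expands into $-x-y-\mathcal{Q}(x,y)=-y-x+\mathcal{Q}(-y,-x)$ and yields $\mathcal{Q}_i(x,y)=-\mathcal{Q}_i(-y,-x)$. The remaining identities of (ii) come at once: $\mathcal{Q}_i(x,0)=\mathcal{Q}_i(0,y)=0$ from $x\cdot 0=x$ and $0\cdot y=y$; $\mathcal{Q}_i(x,-x)=0$ from $x\cdot(-x)=0$; and $\mathcal{Q}_i(x,x)=0$ because every BCH term contains at least one inner factor $[x,y]$ which vanishes on the diagonal.

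The main obstacle, and the step I expect to require the most care, is (iii): organizing every monomial of $\mathcal{Q}_i$ to factor through an antisymmetric pair $(x_ky_h-x_hy_k)$. The plan is to extract inductively the innermost commutator $[e_k,e_h]$ in each iterated BCH bracket and, using antisymmetry $[e_k,e_h]=-[e_h,e_k]$ together with multilinearity, group paired monomials into the asserted antisymmetric form. The residual coefficient $\mathcal{R}_{h,k}^{i}(x,y)$ inherits polynomial dependence and, by the argument of Step 2, the homogeneity degree $\alpha_i-\alpha_k-\alpha_h$; the restriction of the sum to indices with $\alpha_h+\alpha_k\leq\alpha_i$ is then automatic, since otherwise homogeneity forces $\mathcal{R}_{h,k}^{i}\equiv 0$.
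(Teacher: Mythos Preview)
The paper does not give its own proof of this proposition: it is stated as a background result with a reference to \cite{BLU}, so there is no argument in the paper to compare against. Your BCH-based derivation is precisely the standard route taken in \cite{BLU}, and the reasoning you outline for the polynomial form, the $\delta_\lambda$-homogeneity, the vanishing of $\mathcal{Q}_1,\ldots,\mathcal{Q}_{m_1}$, and items (i)--(ii) is correct.

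The only place worth a remark is (iii). Your plan works, but make the mechanism explicit: write each BCH term in right-nested (Dynkin) form so that the innermost factor is $[X,Y]$ (terms with innermost $[X,X]$ or $[Y,Y]$ vanish), expand $[X,Y]=\sum_{j,\ell}x_j y_\ell[e_j,e_\ell]$, and antisymmetrize over $(j,\ell)$ using $[e_j,e_\ell]=-[e_\ell,e_j]$ to obtain $\sum_{j<\ell}(x_j y_\ell-x_\ell y_j)[e_j,e_\ell]$. The remaining outer brackets, once expanded in the basis, furnish the polynomial coefficients $\mathcal{R}^i_{h,k}(x,y)$, and the weighted-degree bookkeeping you describe then forces both their homogeneity $\alpha_i-\alpha_h-\alpha_k$ and the index restriction $\alpha_h+\alpha_k\le\alpha_i$. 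With that one sentence of clarification, the argument is complete.
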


The following result is contained in \cite{FSSC03}*{Proposition 2.2}.

\begin {prop} \label {campi-omogenei0} The vector fields $ X_j $ have polynomial coefficients and are of the form
\begin{equation}\label{campi-omogenei}
 X_j (x) = \partial_j + \sum_ {i> h_l} ^ n {q_ {i, j} (x) \partial_i}, \quad \text{ for } \: j = 1, \ldots, n \: \: \text{ and } \: \: j \leq h_l
\end{equation}
where $ q_ {i, j} (x) = \frac {\partial \mathcal {Q} _i} {\partial y_j} (x, y) \rvert_ {y = 0} $ (the $ \mathcal {Q} _i $ are those defined in the proposition \ref {group_operation}) for $ h_ {l-1} <j \leq h_l $ and $ 1 \leq l \leq k $; so if $ h_ {l-1} <j \leq h_l $ then $ q_ {i, j} (x) = q_ {i, j} (x_1, \ldots, x_ {h_ {l-1}}) $ and $ q_ {i, j} (0) = 0 $.
\end {prop}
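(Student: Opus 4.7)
The plan is to exploit left-invariance. Any left-invariant vector field $X$ on $\G$ is completely determined by its value at the identity through $X(x) = (d\tau_x)_e X(e)$, so I would start by writing $X_j(x) = (d\tau_x)_e \partial_j|_e$, and compute the differential of the left translation in exponential coordinates. Since Proposition \ref{group_operation} gives $\tau_x(y) = x \cdot y = x + y + \mathcal{Q}(x,y)$, the Jacobian entries are $\partial_{y_j}(\tau_x)_i(0) = \delta_{ij} + \partial_{y_j} \mathcal{Q}_i(x,0)$. Setting $q_{i,j}(x) := \partial_{y_j}\mathcal{Q}_i(x,y)|_{y=0}$, which is manifestly a polynomial since the $\mathcal{Q}_i$ are polynomials, I obtain
\[
X_j(x) = \partial_j + \sum_{i=1}^{n} q_{i,j}(x)\,\partial_i.
\]
It remains to show that the nonzero terms in the sum are precisely those with $i$ belonging to a layer strictly higher than the layer of $j$, i.e. $i > h_l$ when $h_{l-1}<j\le h_l$.

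The key tool for this is the homogeneity identity from Proposition \ref{group_operation}. Differentiating $\mathcal{Q}_i(\delta_\lambda x, \delta_\lambda y) = \lambda^{\alpha_i}\mathcal{Q}_i(x,y)$ in $y_j$ and using $\partial_{y_j}(\delta_\lambda y)_m = \lambda^{\alpha_j}\delta_{jm}$, I get $q_{i,j}(\delta_\lambda x) = \lambda^{\alpha_i - \alpha_j} q_{i,j}(x)$, so $q_{i,j}$ is a $\delta_\lambda$-homogeneous polynomial of degree $\alpha_i - \alpha_j$. A polynomial of negative homogeneity must vanish, so $q_{i,j} \equiv 0$ whenever $\alpha_i < \alpha_j$. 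For the borderline case $\alpha_i = \alpha_j$ with $i\neq j$ (and in particular $i=j$ corresponding to $q_{j,j}$, which must vanish so the coefficient of $\partial_j$ stays equal to $1$), I would appeal to the explicit representation (iii) of $\mathcal{Q}_i$: any contribution to $q_{i,j}(x)$ comes from a summand $\mathcal{R}^i_{h,k}(x,0)(x_k\delta_{hj} - x_h\delta_{kj})$ with $\alpha_h + \alpha_k \le \alpha_i$. Taking $h=j$ (or $k=j$) forces $\alpha_k \le \alpha_i - \alpha_j = 0$, which is impossible since every layer index is at least $1$. Hence $q_{i,j}\equiv 0$ for $\alpha_i \le \alpha_j$, leaving precisely the terms with $i > h_l$.

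For the remaining refinements, I would invoke property (ii) of $\mathcal{Q}$: since $\mathcal{Q}_i(x,y)$ with $i$ in layer $\ell$ depends only on the variables of strictly lower layers, the derivative $\partial_{y_j}\mathcal{Q}_i(x,0)$ depends only on $x_1,\ldots, x_{h_{\ell-1}}$, which gives the stated dependence of $q_{i,j}$ on the lower-layer coordinates. Finally, since $\mathcal{Q}_i(0,y) = 0$ by property (ii), we have $\partial_{y_j}\mathcal{Q}_i(0,0) = 0$, so $q_{i,j}(0)=0$.

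The calculation is essentially algebraic and the only subtlety is careful bookkeeping between the degrees $\alpha_i$ and the layer indices $h_l$; the main obstacle is making sure the borderline case $\alpha_i=\alpha_j$ is genuinely ruled out rather than only bounded, which is why one must descend from the abstract homogeneity argument to the explicit structural formula (iii) for $\mathcal{Q}_i$.
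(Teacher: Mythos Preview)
The paper does not supply its own proof of this proposition; it simply cites \cite{FSSC03}*{Proposition~2.2}. Your argument is the standard one and is correct: push forward $\partial_j$ by the left translation, read off the coefficients from the Jacobian of $y\mapsto x\cdot y$ at $y=0$ using the explicit group law of Proposition~\ref{group_operation}, and then use homogeneity together with the structural identity~(iii) to kill all coefficients $q_{i,j}$ with $\alpha_i\le\alpha_j$.

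One minor simplification is available for the borderline case $\alpha_i=\alpha_j$: since $q_{i,j}$ is then $\delta_\lambda$-homogeneous of degree $0$, it is constant, and you have already established $q_{i,j}(0)=0$ independently from $\mathcal{Q}_i(0,y)\equiv 0$; hence $q_{i,j}\equiv 0$ without needing to unpack formula~(iii). Note also that your argument for the dependence of $q_{i,j}$ on lower-layer coordinates correctly yields dependence on $x_1,\dots,x_{h_{\ell-1}}$ with $\ell$ the layer of~$i$; the statement as printed in the paper indexes instead by the layer $l$ of~$j$, which appears to be a typographical slip (already in the Heisenberg group $q_{3,1}(x)=-\tfrac12 x_2$ shows the latter cannot hold).
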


\subsection{Intrinsic distance and gauge pseudo-distance} 
An absolutely continuous curve $ \gamma: [0, T] \rightarrow \mathbb {G} $ is called \emph {sub-unitary} with respect to $ X_1, \ldots, X_ {m_1} $ if it is a \emph {horizontal curve}, that is, if there exist real measurable functions $ c_1, \ldots, c_ {m_1}: [0, T] \rightarrow \mathbb {R} $ such that
$$ \dot {\gamma} (s) = \sum_ {j = 1} ^ {m_1} {c_j (s) X_j (\gamma (s))}, \quad \text {for $ \mathcal {L} ^ 1 $ -a.e.} s \in [0, T] $$ with $ \sum_ {j = 1} ^ {m_1} {c_j(s) ^ 2} \leq 1 $, for $ \mathcal {L} ^ 1 $ -a.e.  $s \in [0, T] $.

\begin {defin}[Carnot-Carath\'eodory distance] Let $ \mathbb {G} $ be a Carnot group. For $ p, q \in \mathbb {G} $ we define their \emph {Carnot-Carath\'eodory distance} $ d_c (p, q) $ as
$$ d_c (p, q): = \mathrm {inf} \{T> 0: \text{it exists an sub-unitary curve } \gamma \text { with },\: \gamma (0) = p, \: \gamma (T) = q \}. $$
\end{defin}\label{def_dcc}

The last definition is well-placed: the set of sub-unitary curves connecting $ p $ and $ q $ is non-empty, by Chow's theorem (\cite{BLU}*{Theorem 19.1.3}), since by \eqref{stratificazione}, the rank of the Lie algebra generated by $ X_1, \ldots, X_ {m_1} $ is $n$; hence, $d_c$ is a distance on $\G$ inducing the same topology as the standard Euclidean distance. We shall denote by $ B_r(p) $ the open balls, centred in $p$ of radius $r>0$, associated with the distance $ d_c $. For the sake of simplicity, if $p=e$ we will use the notation $B_r(e)=B_r$. 

The Carnot-Carath\'eodory metric $d_c$ is equivalent to a more explicitly defined pseudo-distance, called the  \emph{gauge pseudo-distance}, defined as follows.  Let $||\cdot ||$ denote the Euclidean distance to the origin in the Lie algebra $\mathfrak g$. For $u = u_1 + \cdots + u_k \in \mathfrak g$ with $u_i\in V_i$, one defines
\begin{equation}\label{gauge}
|u|_{\mathfrak g} := \left(\sum_{i=1}^k ||u_i||^{2r!/i}\right)^{\frac{1}{2r!}}.
\end{equation}
The \emph{non-isotropic gauge} in $\G$ is defined by letting 
\begin{equation}\label{nig}
|p|_{\G} = |\exp^{-1} p|_{\mathfrak g}, \quad\quad \ \ \ \ \ p\in \G,
\end{equation}
see \cite{Fo} and \cite{FS}. Since the exponential mapping $\exp:\mathfrak g \to \G$ is of class $C^\infty$ (actually analitic) diffeomorphism, the maps $p\to |p|_\G$ is $C^\infty(\G\setminus \{e\})$.
Notice that from \eqref{gauge} and \eqref{dilatazioni} we have for any $\lambda>0$
\begin{equation}\label{gau}
|\delta_\lambda(p)|_\G = \lambda |p|_\G.
\end{equation}
The \emph{gauge pseudo-distance} in $\G$ is defined by
\begin{equation}\label{rho0}
d(p,q):=|p^{-1} \cdot q|_\G.
\end{equation}
The function $d$ has all the properties of a distance, except symmetry and the triangle inequality, which is satisfied with a universal constant, usually different from one on the right-hand side, see \cites{FS,BLU}. Since the dilatations are group automorphisms, from \eqref{rho0} and \eqref{gau}, it follows that $d$ id homogeneous of degree one with respect to the group dilatation, that is for any $\lambda>0$ 
\begin{equation}\label{gauge2}
d(\delta_\lambda(p),\delta_\lambda(p')) = \lambda d(p,p').
\end{equation}
It is well known, see for instance Proposition 5.1.4 in \cite{BLU}, that there exist universal constants $\Lambda_\G$ such that for $p\in \G$ one has
\begin{equation}\label{rhod}
\Lambda_\G^{-1} |p|_\G \le d_c(e,p) \le \Lambda_\G |p|_\G.
\end{equation}


The integer
   $$ Q: = \sum_ {j = 1} ^ n {\alpha_j} = \sum_ {i = 1} ^ k {i\text{dim}(V_i)} $$
is the \emph {homogeneous dimension} of $ \mathbb {G} $. It is the Hausdorff dimension of $ \mathbb {G} \cong \mathbb {R} ^ n $ with respect to the distance $ d_c $, see \cite{Mit}.

The $ n $ -dimensional Lebesgue measure $\mathcal {L} ^ n $, is the Haar measure of the group $ \mathbb {G} $ that is, for every $ \mathcal {L} ^ n $ -measurable set $E \subset \mathbb {G}$ and for each $x\in\G$ it results $\mathcal {L} ^ n \left (x \cdot E \right) = \mathcal {L} ^ n \left (E \right)$. Moreover, if $ \lambda> 0 $ then $ \mathcal {L} ^ n \left (\delta_ \lambda \left (E \right) \right) = \lambda ^ Q \mathcal {L} ^ n \left (E \right)$. In particular, for any $ r> 0 $ and any $ p \in \mathbb {G} $, it holds $$ \mathcal {L} ^ n \left (B_r (p) \right) = r ^ Q \mathcal {L} ^ n \left (B_1 (p)\right) = r ^ Q \mathcal {L} ^ n \left (B_1 \right) $$ where $ Q $ is the homogeneous dimension.

All the spaces $ L ^ p (\mathbb {G}) $ that we will consider are defined with respect to the Lebesgue measure $ \mathcal {L} ^ n $. If $A\subset \G$ is $\mathcal{L}^n$-measurable, we write $|A|=\mathcal{L}^n(A)$.

A map $ L: \mathbb {G} \rightarrow \mathbb {R} $ is \emph{$\mathbb {G} $-linear} if it is a group homeomorphism from $ \mathbb {G} \equiv (\mathbb {R} ^ n, \cdot) $ to $ (\mathbb {R}, +) $ and if it is homogeneous of degree 1 with respect to the intrinsic dilations of $ \mathbb {G} $, that is $L (\delta_ \lambda x) = \lambda Lx$ for $\lambda> 0$ and $x \in \mathbb {G}$. Similarly, we say that a map $\ell:\G\to\R$ is $\G$-affine if there exists a linear map and $L$ and $c\in\R$ such that $\ell(x)=L(x)+c$, for every $x\in\G$.

Given a basis $X_1,\ldots, X_{n}$, all $\G$-linear maps are represented as follows.
\begin{prop}\label{rap-lin}
    A map $L:\G\to\R$ is $\G$-linear if and only if there is $a=(a_1,\ldots,a_{m_1})\in\R^{m_1}$ such that, if $x=(x_1,\ldots,x_n)\in\G$, then $$L(x)=\sum_{i=1}^{m_1}a_ix_i.$$
\end{prop}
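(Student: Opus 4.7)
The plan is to verify the easy direction by direct computation and, for the converse, to show that $L$ vanishes on the vertical subgroup $V:=\{x\in\G: x_1=\cdots=x_{m_1}=0\}$ before determining $L$ on horizontal coordinates. Throughout, let $e_r\in\G$ denote the point with $r$-th exponential coordinate equal to $1$ and all others zero.

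For the sufficiency, assume $L(x)=\sum_{i=1}^{m_1}a_ix_i$. By Proposition \ref{group_operation}(ii), $\mathcal{Q}_j\equiv 0$ for $j\le m_1$, so the first $m_1$ coordinates of $x\cdot y$ are exactly $x_j+y_j$, which gives $L(x\cdot y)=L(x)+L(y)$. Since $\alpha_j=1$ for $j\le m_1$ by \eqref{omogeneita2}, the homogeneity is immediate: $L(\delta_\lambda x)=\sum_{j}a_j\lambda x_j=\lambda L(x)$.

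For the necessity, set $a_i:=L(e_i)$ for $i\le m_1$. The first claim is that $L\equiv 0$ on $V$. In the expansion of Proposition \ref{group_operation}(iii), the summation indices $h,k$ satisfy $\alpha_h+\alpha_k\le\alpha_j\le 2$, forcing $h,k\le m_1$. Hence for $x=te_r$ and $y=se_r$ with $r>m_1$, both $x_k$ and $y_h$ vanish, so $\mathcal{Q}_j(te_r,se_r)=0$ for every $j$, and $te_r\cdot se_r=(t+s)e_r$. Therefore $f(t):=L(te_r)$ is $\R$-additive, while the homogeneity of $L$ yields $f(\lambda^{\alpha_r}t)=\lambda f(t)$ for every $\lambda>0$, i.e.\ $f(s)=s^{1/\alpha_r}f(1)$ for $s>0$. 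Comparing $f(n)=nf(1)$ from additivity with $f(n)=n^{1/\alpha_r}f(1)$ for $n\ge 2$, and using $\alpha_r\ge 2$, forces $f(1)=0$, so $L$ vanishes on each vertical coordinate axis. A second application of Proposition \ref{group_operation}(iii) with both arguments in $V$ shows all contributing differences $x_ky_h-x_hy_k$ vanish (the only admissible $h,k$ are $\le m_1$, where $V$-elements are zero); hence the group law on $V$ reduces to ordinary addition, and the additivity of $L|_V$ combined with its vanishing on axes yields $L|_V\equiv 0$.

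To conclude, decompose an arbitrary $x\in\G$ as $x=h\cdot v$ with $h:=(x_1,\dots,x_{m_1},0,\dots,0)$ and $v\in V$ determined recursively by $v_j=x_j-\mathcal{Q}_j(h,v)$ for $j>m_1$ (solvable stratum by stratum, since $\mathcal{Q}_j$ depends only on strictly lower strata). Then $L(x)=L(h)+L(v)=L(h)$. The analogous additivity/homogeneity argument along each horizontal axis $\{te_i : t\in\R\}$ with $i\le m_1$---where now $\alpha_i=1$ turns additivity plus positive homogeneity into genuine $\R$-linearity, yielding $L(te_i)=ta_i$---combined with the observation that any product $x_1e_1\cdot\ldots\cdot x_{m_1}e_{m_1}$ of horizontal pieces differs from its coordinatewise sum only by a vertical correction (on which $L$ vanishes), produces $L(h)=\sum_{i=1}^{m_1}a_ix_i$. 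The delicate points are the existence of the decomposition $x=h\cdot v$ and the abelianity of $V$; both rely essentially on the step-two bound $\alpha_j\le 2$ in Proposition \ref{group_operation}(iii), which is where the structural hypothesis of the paper is used.
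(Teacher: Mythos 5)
Your proof is correct. The paper itself states Proposition~\ref{rap-lin} without proof (it is a standard fact about stratified groups, essentially a consequence of Pansu's theory of group homomorphisms between Carnot groups), so there is no in-paper argument to compare against; I will therefore assess the argument on its own terms.

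Both directions are sound. For sufficiency you correctly invoke the vanishing of the first $m_1$ components of $\mathcal{Q}$ and $\alpha_j=1$ for $j\le m_1$. For necessity, the reduction to the vertical subgroup $V$, the one-variable additivity-plus-homogeneity argument ($nf(1)=n^{1/\alpha_r}f(1)$ with $\alpha_r\ge 2$ forces $f(1)=0$), and the final horizontal computation all check out; note that you never need continuity because $1$-homogeneity in $\lambda>0$ already linearizes each additive one-variable restriction, which is the right way to avoid Hamel-basis pathologies. One small over-complication: the ``recursion'' $v_j=x_j-\mathcal{Q}_j(h,v)$ is in fact degenerate in step two, since by Proposition~\ref{group_operation}(iii) every term of $\mathcal{Q}_j(h,v)$ with $j>m_1$ carries a factor $h_k v_{h'}-h_{h'}v_k$ with $h',k\le m_1$, and $v$ vanishes in those slots, so $\mathcal{Q}_j(h,v)=0$ and simply $v_j=x_j$. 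Equivalently, $v:=h^{-1}\cdot x$ lands in $V$ directly because $\mathcal{Q}_j\equiv 0$ for $j\le m_1$; that phrasing avoids any recursion and in fact works in any step. Your closing remark slightly overstates the role of the step-two hypothesis: the vertical-axis vanishing only needs $\alpha_r\ge 2$ (true in any Carnot group of step $\ge 2$) and the decomposition $x=h\cdot v$ needs nothing special; only the abelianity of $V$ genuinely exploits the step bound (and even that holds through step three). None of this affects correctness for the paper's step-two setting.
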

Moreover, if $x=(x_1,\ldots,x_n)\in\G$  and $x_0\in\G$ are given, we set 
\begin{equation*}
\pi_{x_0}(x):=\sum_{j=1}^{m_1}x_jX_j(x_0).
\end{equation*}

\subsection{Folland-Stein and horizontal Sobolev classes}
Fixed $\Omega \subseteq \G$, the action of vector fields $X_j$, with $j=1,\ldots,m_1$ on a function $f:\Omega\to\R$ is specified by the Lie derivative: we say that a function $f$ is differentiable along direction $ X_j $, for $ j = 1, \ldots, m_1 $, in $ x_0 \in \G $ if the map $ \lambda \mapsto f (\tau_ {x_0} (\delta_ \lambda (e_j)) $ is differentiable in $ \lambda = 0 $, where $ e_j $ is the $ j $-th vector of the canonical basis of $ \mathbb {R} ^ {m_1} $. In this case, we will write $$ X_j f (x_0) = \left. \frac {d} {d \lambda} f (\tau_ {x_0} (\delta_ \lambda e_j) ) \right | _ {\lambda = 0}.$$ 
If instead $ f \in L ^ 1_ {loc} (\Omega) $, $ X_jf $ exists in a weak sense, if 
  $$ \int_ \Omega {fX_j \varphi} \: d\mathcal{L}^n = - \int_ \Omega {\varphi X_jf} \: d\mathcal{L}^n$$
for each $ \varphi \in C _0 ^ \infty (\Omega) $.

Once a basis $X_1,\ldots, X_{m_1}$ of the horizontal layer is fixed we define, for any function $f:\Omega\to \R$ for which the partial derivatives $X_jf$ exist, the horizontal gradient of $f$, denoted by $\Hnabla f$, is defined as the horizontal section
\begin{equation}\label{horiz-grad}
\Hnabla f:=\sum_{i=1}^{m}(X_if)X_i,
\end{equation}
whose coordinates are $(X_1f,\ldots,X_{m_1}f)$. Moreover, if $\phi=(\phi_1,\dots,\phi_{m_1})$ is an horizontal section such that $X_j\phi_j\in L^1_{loc}(\G)$ for $j=1,\dots,m_1$, we define $\divg\phi$ as the real-valued function
\begin{equation}\label{horiz-div}
\divg\phi:=-\sum_{j=1}^{m_1}X_j^*\phi_j=\sum_{j=1}^{m_1}X_j\phi_j.
\end{equation}

The positive \emph{sub-Laplacian} operator on $\G$ is the second-order differential operator, given by
$$
\mathcal{L}:= \sum_{j=1}^{m_1}X^*_jX_j=-\sum_{j=1}^{m_1} X_j^2.
$$
It is easy to see that
$$
\mathcal{L}u = - \divg (\Hnabla u).
$$
The operator $\mathcal L$ is left-invariant with respect to group translations and homogeneous of degree two with respect to group dilatations, i.e. for any $x\in\G$ and $\lambda>0$ we have
$$\mathcal L (u\circ \tau_x) = (\mathcal Lu)\circ \tau_x, \qquad \mathcal{L}(\delta_\lambda u) = \lambda^2 \delta_\lambda(\mathcal{L} u).$$

Furthermore, by the assumptions \ref{stratificazione}, the system $\{X_1,...,X_{m_1}\}$ satisfies the finite rank condition 
\[
\text {rank Lie} [X_1,\ldots,X_{m_1}] = n,
\] 
therefore by  H\"ormander's theorem \cite{H} the operator $\mathcal{L}$ is hypoelliptic. However, when the step $k$ of $\G$ is greater than one, the operator $\mathcal{L}$ fails to be elliptic at every point of $\G$.

More in general, given $1<p<\infty$ we consider the quasilinear operator, known as the \emph{$p$-sub-Laplacian}, defined by
\begin{equation}\label{plap}
\mathcal{L}_p u:=\sum_{j=1}^{m_1} X_j(|\Hnabla u|^{p-2} X_ju ).
\end{equation}

The H\"older classes $C^{k, \alpha}$ has been introduced by Folland and Stein, see \cites{Fo, FS}. The functions in these classes are H\"older continuous with respect to the metric $d_c$  

\begin{defin}[Folland-Stein classes]\label{eq:fsclass}
Let $\Omega \subseteq \G$ an open set and $0<\alpha\leq 1$ and $u:\Omega\to \R$ a function. We say that
$u\in C^{0, \alpha}(\Omega)$ if there exists a constant $M>0$, such that 
$$|u(x) - u(y)| \leq M\, d_c(x,y)^{\alpha}, \qquad\text{for every }\ x,y\in\Omega.$$Defining the H\"older seminorm of $u\in C^{0,\alpha}(\Omega)$ as 
\begin{equation}\label{eq:holdsemi}
[u]_{C^{0,\alpha}(\Omega)}:= \underset{\underset{x \neq y}{x,y\in \Omega}}{\sup} \frac{|u(x)-u(y)|}{d_c(x,y)^{\alpha}},
\end{equation}
the space $C^{0,\alpha}(\Omega)$ is a Banach space with respect to the norm $$\|u\|_{C^{0,\alpha}
(\Omega)}:= \|u\|_{L^\infty(\Omega)} + [u]_{C^{0,\alpha}(\Omega).}$$ 
For any $k\in \mathbb N$, the spaces $C^{k, \alpha}(\Omega)$ are defined inductively as follows: we say that $u \in C^{k, \alpha}(\Omega)$ if $X_i u \in C^{k-1,\alpha}(\Omega)$ for every $i=1,\ldots, m_1$.
\end{defin}

In the following, we will place with $C^{k,\lambda}(\Omega,H\G)$ the space of all the horizontal sections $\phi:\Omega\to H\G$ with $\phi:=(\phi_1,\ldots,\phi_{m_1})$, such that $\phi_j\in C^{k,\lambda}(\Omega)$, for $j=1,\ldots,m_1$. While if $u\in C^{k,\alpha}(\Omega')$, for any $\Omega'\subset\subset\Omega$ we will set $u\in C^{k,\alpha}_{loc}(\Omega)$.

Finally, note that the class $C^{0,1}(\Omega)$ coincides with the class of Lipschitz continuous function on $\Omega$ with respect to the metric $d_c$.
\begin{defin}[Horizontal Sobolev spaces]\label{def:hsob}
Given an open set $\Omega \subseteq \G$, a function $u:\Omega\to \R$ and $1\leq p<\infty$, the Horizontal Sobolev spaces are defined as 
$$ HW^{1,p}(\Omega):=\left\{u\in L^p(\Omega): \ X_ju\in L^p(\Omega),\ \text{for all }\, j=1,\ldots, m_1\right\},$$ 
which is a Banach space with the norm $$\|u\|_{HW^{1,p}(\Omega)} := \|u\|_{L^p(\Omega)} + \|\Hnabla u\|_{L^p(\Omega, H\G)}.$$
\end{defin}

The subspace $HW^{1,p}_0(\Omega)$ of $HW^{1,p}(\Omega)$ are defined as the closure with respect the norm $\|\cdot\|_{HW^{1,p}(\Omega)}$ of $C^\infty_0(\Omega)$.

\begin{defin}[Morrey-Campanato spaces]\label{defin-Campanato-spaces}
	Let $\Omega \subseteq \G$. For every $1\le p<+\infty$ and $\lambda \in (0,+\infty)$,  an horizontal section $f=(f_1,\ldots,f_{m_1})\in L^p_{loc}(\Omega,H\G)$, is said to be in the Morrey-Campanato space $\mathcal{E}^{\lambda,\,p}(\Omega,H\G)$ if
	$$[f]_{\mathcal{E}^{\lambda,\,p}(\Omega,H\G)}:=\sup_{B\subset\Omega}\left(\frac1{|B|^{1+p\lambda}}\int_B|f(y)-\mathbf{f_B}(y)|^p\,dy\right)^{1/p}<+\infty,$$
	where the supremum is taken over all balls $B\subset \Omega$ and $\mathbf{f_B}$ is the constant horizontal section given by
	$$\mathbf{f_B}(y):=\sum_{i=1}^{m_1}\left(\fint_Bf_i(z)\,dz\right)X_i(y),\quad \text{for } y\in\Omega.$$
\end{defin}

\begin{remark}
	For every $u=(u_1,\ldots,u_{m_1})\in \mathcal{E}^{\lambda,p}(\Omega,H\G)$, the quantity $[u]_{\mathcal{E}^{p,\lambda}(\Omega,H\G)}$ is a seminorm in $\mathcal{E}^{\lambda,p}(\Omega,H\G)$ and it is equivalent to the quantity \begin{equation}\label{equivalent-Campanato-seminorm} 
    \sup_{B\subset\Omega}\left(\frac1{|B|^{1+p\lambda}}\inf_{\boldsymbol{\xi}}\int_{B}\left|u(x)-\bxi(x)\right|^p\,dx\right)^{1/p},
	\end{equation}
	where the infimum is taken over all the constant horizontal sections such that 
	\begin{equation}\label{b-xi}
		\bxi(x):=\sum_{i=1}^{m_1}\xi_iX_i(x), \quad \text{for }x\in\G \text{ and some }\xi:=(\xi_1,\ldots,\xi_{m_1})\in\R^{m_1}.
	\end{equation} 
    For the proof we refer to Remark 3.6 in \cite{FFM}.
    \end{remark}


\begin{thm}\emph{(\cite{MS79}*{Theorem 5})}\label{theo-isomorph-Campanato-spaces-Holder-spaces}
	For every $\Omega \subseteq \G$, $\lambda\in (0,1)$ and $p\in[1,+\infty)$ one has $$ \mathcal{E}^{\lambda,p}(\Omega,H\G)=C^{0,\lambda}(\Omega, H\G).$$
	More precisely, it results that a function $\phi$ belongs to $\mathcal{E}^{\lambda,p}(\Omega,H\G)$ if and only if $\phi$ is equal $\mathcal{L}^n$-almost everywhere to a function $\psi$ with belongs to the H\"older class $C^{0,\lambda}(\Omega,H\G)$. Moreover the seminorms $[\phi]_{\mathcal{E}^{\lambda,p}(\Omega,H\G)}$ and $[\psi]_{C^{0,\lambda}(\Omega,H\G)}$ are equivalent.
\end{thm}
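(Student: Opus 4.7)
The plan is to invoke the classical Campanato--Meyers equivalence adapted to the space of homogeneous type $(\G, d_c, \mathcal{L}^n)$; since $|B_{2r}(x)| = 2^Q|B_r(x)|$, the doubling condition is exact, placing us in the setting of Mac\'ias--Segovia, so their proof applies and only the vector-valued nature of horizontal sections has to be tracked. A horizontal section $\phi$ is uniquely described by its coordinates $(\phi_1,\ldots,\phi_{m_1})$ in the frame $X_1(\cdot),\ldots,X_{m_1}(\cdot)$, and a \emph{constant} horizontal section has the same coordinates at every base point; hence the problem reduces to $m_1$ scalar Campanato equivalences (or, equivalently, one argument run on the $\R^{m_1}$-valued coordinate map), and I would work at the coordinate level throughout.

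For the direct inclusion $C^{0,\lambda}(\Omega,H\G)\subset \mathcal{E}^{\lambda,p}(\Omega,H\G)$, fix $\psi\in C^{0,\lambda}$ and a ball $B=B_r(x_0)\subset\Omega$. Writing $\xi_i=\fint_B\psi_i\,dz$, Jensen's inequality together with the H\"older bound gives $|\xi_i-\psi_i(y)|\le [\psi_i]_{C^{0,\lambda}}(2r)^\lambda$ for every $y\in B$, whence after summing in $i$ and integrating
\[
\int_B |\psi(y)-\mathbf{\psi_B}(y)|^p\,dy \le C\,[\psi]_{C^{0,\lambda}}^p\,r^{p\lambda}\,|B|;
\]
dividing by $|B|^{1+p\lambda}$ yields the Campanato bound (the weight $|B|^{1+p\lambda}$ is normalized against the volume scaling $|B|\sim r^Q$, so that $\lambda$ coincides with the H\"older exponent).

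The reverse inclusion $\mathcal{E}^{\lambda,p}\subset C^{0,\lambda}$ is the substantive direction. Given $\phi\in \mathcal{E}^{\lambda,p}$ and $x\in\Omega$, the core step is a dyadic oscillation estimate: for concentric balls $B_r(x)\subset B_{2r}(x)\subset\Omega$, the coefficients $\xi_i^{(r)}$ and $\xi_i^{(2r)}$ of $\mathbf{\phi_{B_r}}$ and $\mathbf{\phi_{B_{2r}}}$ satisfy
\[
|\xi_i^{(r)}-\xi_i^{(2r)}| = \Bigl|\fint_{B_r}\bigl(\phi_i-\xi_i^{(2r)}\bigr)\,dz\Bigr| \le \frac{1}{|B_r|}\int_{B_{2r}}|\phi-\mathbf{\phi_{B_{2r}}}|\,dz \le C\,[\phi]_{\mathcal{E}^{\lambda,p}}\,r^\lambda,
\]
by H\"older's inequality, doubling, and the defining bound of $\mathcal{E}^{\lambda,p}$ on $B_{2r}$. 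Iterating along $r_k=2^{-k}r_0$ shows $\{\mathbf{\phi_{B_{r_k}(x)}}\}$ is Cauchy with limit $\psi(x)$, and Lebesgue differentiation in spaces of homogeneous type gives $\psi=\phi$ $\mathcal{L}^n$-a.e.

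To close, the pointwise H\"older estimate $|\psi(x)-\psi(y)|\le C [\phi]_{\mathcal{E}^{\lambda,p}} d_c(x,y)^\lambda$ follows by splitting
\[
\psi(x)-\psi(y)=\bigl(\psi(x)-\mathbf{\phi_{B_\rho(x)}}\bigr)+\bigl(\mathbf{\phi_{B_\rho(x)}}-\mathbf{\phi_{B_\rho(y)}}\bigr)+\bigl(\mathbf{\phi_{B_\rho(y)}}-\psi(y)\bigr)
\]
with $\rho=2d_c(x,y)$; the outer terms are controlled by the telescoping sum produced by the dyadic lemma, while the middle one is controlled by comparing both averages with the average over a common doubled ball $B_{4\rho}(x)\supset B_\rho(x)\cup B_\rho(y)$ via the same technique. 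The main obstacle is essentially notational: distinguishing a horizontal section evaluated pointwise from its coordinate vector, and verifying that $|B|^{1+p\lambda}$ indeed encodes H\"older scaling of order $\lambda$. Once this bookkeeping is handled, the Mac\'ias--Segovia argument transports with no essential modification.
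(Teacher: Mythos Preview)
The paper does not prove this theorem; it is quoted directly from Mac\'ias--Segovia \cite{MS79}*{Theorem 5} and used as a black box. Your sketch is precisely the standard Mac\'ias--Segovia argument (doubling, reduction to scalar coordinates, dyadic Cauchy estimate on ball averages, Lebesgue differentiation, telescoping for the pointwise H\"older bound), so your approach coincides with what the cited reference does.

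One caveat worth flagging: your parenthetical claim that the weight $|B|^{1+p\lambda}$ ``is normalized so that $\lambda$ coincides with the H\"older exponent'' deserves more care. With $|B_r|\sim r^Q$, the dyadic step actually yields $|\xi^{(r)}-\xi^{(2r)}|\lesssim [\phi]_{\mathcal{E}^{\lambda,p}}\,r^{Q\lambda}$, not $r^\lambda$; in Mac\'ias--Segovia the H\"older exponent is measured against the measure-distance $\delta(x,y)\sim d_c(x,y)^Q$, which accounts for the factor $Q$. The paper's own applications of the theorem (e.g.\ setting $\gamma=\lambda/Q$ at the end of the proof of Theorem~\ref{thm:c1au}) are consistent with this, so the discrepancy is a notational wrinkle in the statement rather than a flaw in your strategy, but you should track the exponent honestly rather than assert the identification.
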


\section{The $(\G,p)$-harmonic replacement} \label{sec:har_rep}
In this section, we recall the notion of $(\G,p)$-harmonic replacement.

\begin{defin} Let $\Omega\subseteq \G$ an open set and $u\in HW^{1,p}( \Omega)$. We say that $v\in HW^{1,p}( \Omega)$ is the $(\G,p)$-harmonic replacement of $u$ in $ \Omega$ if
\begin{equation}\label{p-g-minim}
    \int_{ \Omega}\left|\Hnabla v\right|^p \,dx=\min_{u-w\in HW^{1,p}_0( \Omega)}\int_{ \Omega}\left|\Hnabla w\right|^p \,dx.
\end{equation}
\end{defin}
	
	In particular, if $v$ is the $(\G,p)$-harmonic replacement of $u$ in $ \Omega$, it satisfies the critical condition
	\begin{equation}\label{weak-definition-of-p-harmonic-function}
		\int_{ \Omega}\left|\Hnabla v(x)\right|^{p-2} \left \langle \Hnabla v(x), \Hnabla\varphi(x) \right \rangle\,dx=0
	\end{equation}
	for all $\varphi\in HW^{1,p}_{0}( \Omega)$, i.e.  $v$ is the unique weak solution for the Dirichlet problem 
 \begin{equation}\label{p-dir-prob}
     \left\{\begin{matrix}
        \mathcal{L}_pv=0\qquad \text{in } \Omega
        \vspace{0.1cm} \\
        u-v\in HW^{1,p}_0( \Omega)
        \end{matrix}\right. 
\end{equation}
	
    In the following Lemma, we provide some energy estimates for the $(\G,p)$-harmonic replacement, which will be useful in the sequel.
	
    \begin{lem}\label{lemma-p-harm-replac}
		Let $\Omega\subseteq \G$ open and $u\in HW^{1,p}( \Omega)$.
		Let $v$ be the $(\G,p)$-harmonic replacement of $u$ in $ \Omega$.
		Then,
		\begin{itemize}
			\item[\emph{(}i\emph{)}] if $1<p<2,$ then
			\begin{equation}\label{first-ineq-lemma-p-harm-repl}\begin{split} &
					\int_{ \Omega}\left|\Hnabla u(x)-\Hnabla v(x)\right|^p\,dx\\
					&\qquad\le C\left(\int_{ \Omega}\left(\left|\Hnabla u(x)\right|^p-\left|\Hnabla v(x)\right|^p\right)\,dx\right)^{\frac{p}{2}}
					\left(\int_{ \Omega}\big(\left|\Hnabla u(x)\right|+\left|\Hnabla v(x)\right|\big)^p\,dx\right)^{1-\frac{p}{2}},\end{split}
			\end{equation}
			for some positive constant $C$ depending only on $\G$ and $p$;
			\item[\emph{(}ii\emph{)}]if $p\ge 2,$ then
			\begin{equation}\label{second-ineq-lemma-p-harm-repl}
				\int_{ \Omega}\left|\Hnabla u(x)-\Hnabla v(x)\right|^p\,dx
				\le C\int_{ \Omega}\big(\left|\Hnabla u(x)\right|^p-\left|\Hnabla v(x)\right|^p\big)\,dx,
			\end{equation}
			for some positive constant $C$ depending only on $\G$ and $p$.		\end{itemize}		
	\end{lem}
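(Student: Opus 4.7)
The starting point for both cases is the Euler--Lagrange characterization \eqref{weak-definition-of-p-harmonic-function} of the $(\G,p)$-harmonic replacement. Since $u-v\in HW^{1,p}_0(\Omega)$ is an admissible test function, choosing $\varphi=u-v$ one obtains the orthogonality identity
\begin{equation}\label{EL-uv}
\int_{\Omega}|\Hnabla v|^{p-2}\,\scal{\Hnabla v}{\Hnabla u-\Hnabla v}\,dx=0.
\end{equation}
This identity will be used to cancel the linear term in the second-order expansion of the convex function $\xi\mapsto|\xi|^p$ around $\Hnabla v$. The strategy is therefore purely pointwise: the vector inequalities we need hold in every Euclidean fiber $H\G_x\cong\R^{m_1}$, so the whole proof reduces to integrating a pointwise bound.

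For case (ii), the idea is to invoke the classical uniform convexity of $\xi\mapsto|\xi|^p$ when $p\ge 2$, namely
\[
|a|^p-|b|^p-p|b|^{p-2}\scal{b}{a-b}\ge C|a-b|^p,\qquad a,b\in\R^{m_1},
\]
with $C=C(p)>0$. Setting $a=\Hnabla u(x)$, $b=\Hnabla v(x)$ and integrating over $\Omega$, the term linear in $a-b$ vanishes thanks to \eqref{EL-uv}, and one is left precisely with \eqref{second-ineq-lemma-p-harm-repl}.

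For case (i), the analogous pointwise inequality is weaker: for $1<p<2$ one has
\[
|a|^p-|b|^p-p|b|^{p-2}\scal{b}{a-b}\ge C\,\frac{|a-b|^2}{(|a|+|b|)^{2-p}},
\]
(the right-hand side being interpreted as $0$ when $a=b=0$). Integrating this bound and using \eqref{EL-uv} to kill the linear term yields
\[
\int_{\Omega}\frac{|\Hnabla u-\Hnabla v|^2}{(|\Hnabla u|+|\Hnabla v|)^{2-p}}\,dx\le C\int_{\Omega}\bigl(|\Hnabla u|^p-|\Hnabla v|^p\bigr)\,dx.
\]
To pass from this weighted $L^2$-type estimate to the desired $L^p$-bound, I would then write
\[
|\Hnabla u-\Hnabla v|^p=\Bigl(\frac{|\Hnabla u-\Hnabla v|^2}{(|\Hnabla u|+|\Hnabla v|)^{2-p}}\Bigr)^{p/2}\bigl(|\Hnabla u|+|\Hnabla v|\bigr)^{p(2-p)/2}
\]
and apply H\"older's inequality with conjugate exponents $2/p$ and $2/(2-p)$. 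Inserting the previous estimate in the first factor gives exactly \eqref{first-ineq-lemma-p-harm-repl}.

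No real obstacle is expected: the whole argument is the standard variational comparison for $p$-Laplace-type operators, and it carries over to the Carnot-group setting verbatim because the pointwise convexity inequalities live on the horizontal fibers and the Euler--Lagrange identity \eqref{EL-uv} is already supplied by \eqref{weak-definition-of-p-harmonic-function}. The only point where one has to be slightly careful is the case $1<p<2$, where one must verify that the H\"older splitting above is consistent when $|\Hnabla u|+|\Hnabla v|$ vanishes on a set of positive measure; this is handled by the usual convention that the integrand vanishes on $\{\Hnabla u=\Hnabla v=0\}$, as the inequality on $a,b$ is trivially satisfied there.
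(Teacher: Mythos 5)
Your proposal is correct and in substance the same as the paper's proof: both hinge on the orthogonality identity $\int_\Omega |\Hnabla v|^{p-2}\scal{\Hnabla v}{\Hnabla u-\Hnabla v}\,dx=0$ together with the convexity structure of $\xi\mapsto|\xi|^p$ captured by \eqref{stime-p-1}--\eqref{stime-p-2}, finishing the subquadratic case with the very same weighted H\"older splitting. The paper reaches the pointwise convexity bound by inserting the homotopy $u_s=su+(1-s)v$, the identity $u_s-v=s(u-v)$ and the $1/s$ reparametrization, and then applying the monotonicity inequalities \eqref{stime-p-1}--\eqref{stime-p-2} to the integrand; you instead invoke the equivalent Taylor-type estimate $|a|^p-|b|^p-p|b|^{p-2}\scal{b}{a-b}\ge c|a-b|^p$ (resp.\ $\ge c|a-b|^2(|a|+|b|)^{p-2}$) directly, which is just the integrated form of the same monotonicity bound, so the two derivations are essentially a reordering of the same steps.
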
	
	
	\begin{proof}
		For all $s\in[0,1]$, we consider the family of functions $u_s(x):=su(x)+(1-s)v(x)$, so that  $u_0=v$ and $u_1=u$. As a consequence,
		\begin{align*}
			\int_{ \Omega}\Big(\left|\Hnabla u(x)\right|^p-\left|\Hnabla v(x)\right|^p\Big)\,dx
			&=\int_{ \Omega}\left(\int_0^1\frac{d}{ds}\left|\Hnabla u_s(x)\right|^p\,ds\right)\,dx\\
                &=\int_{ \Omega}\left(\int_0^1 p\left|\Hnabla u_s(x)\right|^{p-2}\left\langle\Hnabla u_s(x),\Hnabla(u-v)(x)\right\rangle\,ds\right)\,dx.
		\end{align*}
		Since $u-v\in HW^{1,p}_0(\Omega)$, using it as test function in  \eqref{weak-definition-of-p-harmonic-function}, by Fubini's Theorem we get
		\begin{align*}
			\int_{ \Omega}\Big(&\left|\Hnabla u(x)\right|^p-\left|\Hnabla v(x)\right|^p\Big)\,dx
			\\ &=\;
                p\,\left[\int_{ \Omega}\left(\int_0^1\left|\Hnabla u_s(x)\right|^{p-2}
			\left\langle\Hnabla u_s(x), \Hnabla(u-v)(x)\right\rangle\,ds\right)\,dx\right.
			\\&\qquad\qquad\qquad \left.
			-\int_0^1\left(\int_{ \Omega}\left|\Hnabla v(x)\right|^{p-2}\left\langle\Hnabla v(x), \Hnabla(u-v)(x)\right\rangle\,dx\right)\,ds\right]\\
			&=\;p\int_0^1\bigg(\int_{ \Omega\cap A}\left\langle\left|\Hnabla u_s(x)\right|^{p-2}\Hnabla u_s(x)
			-\left|\Hnabla v(x)\right|^{p-2}\Hnabla v(x),\Hnabla(u-v)(x)\right\rangle\,dx\bigg)\,ds,
		\end{align*}
        where $A=\left\{x\in\Omega:\; |\Hnabla u(x)-\Hnabla v(x)|>0\right\}.$
		Then, recalling that, 
		\begin{equation}\label{difference-u-^-s-v}
			u_s(x)-v(x)=s(u(x)-v(x)) \qquad \text{for all }x\in\Omega,
		\end{equation}
		it results
		\begin{equation}\begin{split}\label{lower-bound-integral-nabla-u-nabla-v-1}
				\int_{ \Omega}\Big(&\left|\Hnabla u(x)\right|^p-\left|\Hnabla v(x)\right|^p\Big)\,dx\\
				=\;&p\int_0^1\frac{1}{s}\bigg(\int_{ \Omega\cap A}\left\langle\left|\Hnabla u_s(x)\right|^{p-2}\Hnabla u_s(x)
			-\left|\Hnabla v(x)\right|^{p-2}\Hnabla v(x),\Hnabla(u_s-v)(x)\right\rangle\,dx\bigg)\,ds.
	\end{split}\end{equation}

		By apply some well-known inequalities (see e.g. Section 3 of \cite{DP05} or Lemma 2.1 in \cite{DAM98}), for any horizontal sections $\bxi,\bzeta:\G\to H\G$ such that $|\bxi|+|\bzeta|>0$ in $\G$ any for any $1<p<\infty$ we have 
	\begin{equation}\label{stime-p-1}
	    \left\langle\left|\bxi\right|^{p-2}\bxi-\left|\bzeta\right|^{p-2}\bzeta,\bxi-\bzeta\right\rangle \ge \gamma \left|\bxi-\bzeta\right|^2\left(\left|\bxi\right|+\left|\bzeta\right|\right)^{p-2}.
        \end{equation}
		for some positive constant $\gamma$ depending only on $\G$ and $p$. Moreover, for $p\ge 2$, for any horizontal sections $\bxi,\bzeta:\G\to H\G$
        \begin{equation}\label{stime-p-2}
	    \left\langle\left|\bxi\right|^{p-2}\bxi-\left|\bzeta\right|^{p-2}\bzeta,\bxi-\bzeta\right\rangle \ge \gamma \left|\bxi-\bzeta\right|^p        
        \end{equation}
        for some positive constant $\gamma$ of \eqref{stime-p-1}.

    \noindent{\em Step 1: the case $1<p<2$.} At first, we notice that we can apply inequalities \eqref{stime-p-1} with the choices $\bxi:=\Hnabla u$ and $\bzeta:=\Hnabla v$ on $A$. Indeed, by \eqref{difference-u-^-s-v}, for any $x\in A$, it results
    $$|\Hnabla u_s(x)|+|\Hnabla v(x)|\geq |\Hnabla u_s(x)-\Hnabla v(x)|=s|\Hnabla u(x)-\Hnabla v(x)|>0.$$   
    Thus, by inequality \eqref{stime-p-1} and \eqref{lower-bound-integral-nabla-u-nabla-v-1}, we obtain that
		\begin{equation}\label{lower-bound-integral-nabla-u-nabla-v-bis-1}\begin{split}
				&\int_{ \Omega}\Big(\left|\Hnabla u(x)\right|^p-\left|\Hnabla v(x)\right|^p\Big)\,dx\\
				&\qquad \qquad \quad \ge\;p\,\gamma\int_0^1\frac{1}{s}\left(\int_{ \Omega\cap A}\left|\Hnabla u_s(x)
					-\Hnabla v(x)\right|^2(\left|\Hnabla u_s(x)\right|
					+\left|\Hnabla v(x)\right|)^{p-2}\,dx\right)\,ds \\
                    &\qquad \qquad \quad =\;p\,\gamma\int_0^1 {s}\left(\int_{ \Omega\cap A}\left|\Hnabla u(x)
					-\Hnabla v(x)\right|^2(\left|\Hnabla u_s(x)\right|
					+\left|\Hnabla v(x)\right|)^{p-2}\,dx\right)\,ds\\
                    &\qquad \qquad \quad =\;p\,\gamma\int_0^1 {s}\left(\int_{ \Omega}\left|\Hnabla u(x)
					-\Hnabla v(x)\right|^2(\left|\Hnabla u_s(x)\right|
					+\left|\Hnabla v(x)\right|)^{p-2}\,dx\right)\,ds,            \end{split}\end{equation}
            where in the first equality we used the identity in \eqref{difference-u-^-s-v}. 

		
		Furthermore, recalling that $s\in[0,1]$, for any $x\in \Omega$ we have that \vspace{0.2cm}
		\begin{equation*}\begin{split}
		    \left|\Hnabla u_s(x)\right|+\left|\Hnabla v(x)\right|&\le s\left|\Hnabla u(x)\right|+(1-s)\left|\Hnabla v(x)\right|+\left|\Hnabla v(x)\right| \\
            &=s\left|\Hnabla u(x)\right|+(2-s)\left|\Hnabla v(x)\right|\\
            &\leq 2\,\big(\left|\Hnabla u(x)\right|+\left|\Hnabla v(x)\right|\big),
	     \end{split}\end{equation*}  
	   which together with \eqref{lower-bound-integral-nabla-u-nabla-v-bis-1}, leads to 		
		\begin{equation}\label{lower-bound-integral-nabla-u-nabla-v-1-<-p-leq-2-1}\begin{split}
				\int_{ \Omega}\Big(&\left|\Hnabla u(x)\right|^p-\left|\Hnabla v(x)\right|^p\Big)\,dx\\
				&\ge\; p\,\gamma\, 2^{p-2}\int_0^1 {s}\left(\int_{ \Omega} \left|\Hnabla u(x)-\Hnabla v(x)\right|^2
				\big(\left|\Hnabla u(x)\right|+\left|\Hnabla v(x)\right|\big)^{p-2}\,dx\right)\,ds\\
				&=\;p\,\gamma\, 2^{p-3} \int_{ \Omega}\left|\Hnabla u(x)-\Hnabla v(x)\right|^2\big(
				\left|\Hnabla u(x)\right|+\left|\Hnabla v(x)\right|\big)^{p-2}\,dx.
		\end{split}\end{equation}
		
		Moreover, one has
		\begin{align}\label{ineq-with-holder}
			\nonumber\int_{ \Omega}&\left|\Hnabla u(x)-\Hnabla v(x)\right|^p\,dx\\ \nonumber
			&=\;\int_{ \Omega}\left|\Hnabla u(x)-\Hnabla v(x)\right|^p\big(\left|\Hnabla u(x)\right|+\left|\Hnabla v(x)\right|\big)^{\frac{p(p-2)}{2}}\big(
			\left|\Hnabla u(x)\right|+\left|\Hnabla v(x)\right|\big)^{-\frac{p(p-2)}{2}}\,dx\\ 
			&\le\;\left(\int_{ \Omega}\left|\Hnabla u(x)-\Hnabla v(x)\right|^{2}\big(
			\left|\Hnabla u(x)\right|+\left|\Hnabla v(x)\right|\big)^{p-2}\,dx\right)^{\frac{p}{2}}\\ \nonumber
                &\qquad\qquad\qquad \qquad \qquad \qquad\qquad\qquad \qquad \qquad\quad\cdot\left(\int_{ \Omega}\big(\left|\Hnabla u(x)\right|+\left|\Hnabla v(x)\right|\big)^p\,dx\right)^{1-\frac{p}{2}}. 
		\end{align}
            where in last inequality we apply the H\"older's inequality with H\"older exponent $2/p$ and conjugate exponent
		\[\left(\frac{2}{p}\right)'=\frac{2/p}{2/p-1}=\frac{2}{2-p}.\]
	Then using estimates \eqref{lower-bound-integral-nabla-u-nabla-v-1-<-p-leq-2-1} for the first term of the right hand side of \eqref{ineq-with-holder}, we obtain
		\begin{align*}
			\int_{ \Omega}&\left|\Hnabla u(x)-\Hnabla v(x)\right|^p\,dx\\
			&\qquad \le \; C\left(\int_{ \Omega}\big(\left|\Hnabla u(x)\right|^p-\left|\Hnabla v(x)\right|^p\big)\, 
			dx\right)^{\frac{p}{2}}\left(\int_{ \Omega}\big(\left|\Hnabla u(x)\right|+\left|\Hnabla v(x)\right|\big)^p\,dx\right)^{1-\frac{p}{2}},
		\end{align*}
		for some suitable constant $C>0$, depending only on $\G$ and $p$. This proves concludes the proof of \eqref{first-ineq-lemma-p-harm-repl} when $1<p<2$.

        \noindent{\em Step 2: the case $p\geq 2$.} Applying inequalities \eqref{stime-p-2} with the choices $\bxi:=\Hnabla u$ and $\bzeta:=\Hnabla v$ and using estimate \eqref{lower-bound-integral-nabla-u-nabla-v-1}, we obtain
      \begin{equation*}
        \begin{split}
				\int_{ \Omega}\Big(\left|\Hnabla u(x)\right|^p-\left|\Hnabla v(x)\right|^p\Big)\,dx
                    &\ge\;p\,\gamma\int_0^1\frac{1}{s}\left(\int_{ \Omega\cap A}\left|\Hnabla u_s(x)-\Hnabla v(x)\right|^p \,dx\right)\,ds \\
                    &=p\,\gamma\int_0^1 {s}^{p-1}\left(\int_{ \Omega\cap A}\left|\Hnabla u(x)-\Hnabla v(x)\right|^p \,dx\right)\,ds,\\
                    &=p\,\gamma\int_0^1 {s}^{p-1}\left(\int_{ \Omega}\left|\Hnabla u(x)-\Hnabla v(x)\right|^p \,dx\right)\,ds,\\
                    &=\gamma \int_{ \Omega}\left|\Hnabla u(x)-\Hnabla v(x)\right|^p\,dx,
            \end{split}\end{equation*}        
	where in the first equality we used the identity in \eqref{difference-u-^-s-v}. This establishes \eqref{second-ineq-lemma-p-harm-repl} when $p\ge2$ and completes the proof of Lemma \ref{lemma-p-harm-replac}.
	\end{proof}

\section{A Dichotomy result}\label{sec:dichotomy}

As a first step, we a prove a \emph{dichotomy} result. Roughly speaking, two situations can occur: either the average of the energy of an almost minimizer decreases in a smaller ball, or the distance of its horizontal gradient and a suitable constant horizontal section becomes as small as we wish. This implies that $\G$-affine functions are the only ones for which the average does not improve in small balls.

Our first result is the following:
\begin{prop}\label{dic} Let $u \in HW^{1,p}(B_1)$ be such that
	\begin{equation}\label{first}J_p(u,B_1) \leq (1+\sigma) J_p(v,B_1)\end{equation}
	for all $v \in HW^{1,p}(B_1)$ such that $u-v\in HW^{1,p}_0(B_1)$.
	Denoting by
	\begin{equation}\label{def_a}a : = \left(\fint_{B_1} |\Hnabla u(x)|^p dx\right)^{1/p},\end{equation}
	there exists $\e_0\in(0,1)$ such that for every $\varepsilon\in(0,\e_0)$ there exist $\eta\in(0,1)$,  $M\ge1$ and $\sigma_0\in(0,1)$, depending on $\e$, $Q$ and $p$, such that, if $\sigma\in[0, \sigma_0]$ and $a\ge M$, then the following dichotomy holds. Either
	\begin{equation}\label{alt1}\left(\fint_{B_{\eta}} |\Hnabla u(x)|^2 dx\right)^{1/p} \leq \frac a 2, \end{equation}
	or
	\begin{equation}\label{alt2}\left( \fint_{B_\eta} |\Hnabla u(x)- \mathbf{q}(x)|^p dx\right)^{1/p} \leq \varepsilon a,\end{equation}
	where $\mathbf{q}:\G \to H\G$ is a constant horizontal section, that is 
	\begin{equation}\label{def-q}
		\mathbf{q}(x):= \sum_{i=1}^{m_1}q_jX_j(x), \quad x\in \G
	\end{equation}
	for some suitable $q=(q_1,\ldots,q_{m_1})\in \R^{m_1}$, with 
	\begin{equation}\label{bound-q}
		\frac a 4 < |\mathbf{q}| \leq C_0 a,
	\end{equation}
	for some universal constant $C_0>0$.
\end{prop}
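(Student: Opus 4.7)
\medskip

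\noindent\textbf{Proof plan.} The strategy is the classical compactness/harmonic replacement scheme of De Silva--Savin, adapted to the $p$-setting as in \cite{DFFV} and to the Carnot framework as in \cite{FFM}: compare $u$ with its $(\G,p)$-harmonic replacement $v$ in $B_1$, exploit the regularity of $v$ to find a good constant horizontal section $\bq$, and transfer that information to $u$.

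\medskip

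\noindent\textbf{Step 1: closeness of $u$ and its $(\G,p)$-harmonic replacement.} Let $v$ be the $(\G,p)$-harmonic replacement of $u$ in $B_1$, as in \eqref{p-g-minim}. Since $v$ is an admissible competitor in \eqref{first} and $\int_{B_1}|\Hnabla v|^p\le \int_{B_1}|\Hnabla u|^p$, applying the almost minimality inequality and discarding the nonnegative term $\chi_{\{u>0\}}$ on the left we get
\[
\int_{B_1}\bigl(|\Hnabla u|^p-|\Hnabla v|^p\bigr)\,dx\;\le\;\sigma\int_{B_1}|\Hnabla u|^p\,dx+(1+\sigma)|B_1|.
\]
Dividing by $|B_1|$ and using the definition \eqref{def_a} of $a$,
\[
\fint_{B_1}\bigl(|\Hnabla u|^p-|\Hnabla v|^p\bigr)\,dx\;\le\;\Bigl(\sigma+\frac{1+\sigma}{a^p}\Bigr)\,a^p.
\]
Provided $\sigma\le\sigma_0$ and $a\ge M$ with $\sigma_0$ small and $M$ large (both to be fixed later in terms of $\e$), the right-hand side is bounded by $\delta\,a^p$ for an arbitrarily small $\delta=\delta(\sigma_0,M)$. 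Plugging this into Lemma \ref{lemma-p-harm-replac} and bounding $\bigl(\fint_{B_1}(|\Hnabla u|+|\Hnabla v|)^p\bigr)^{1/p}\le 2a$ in the case $1<p<2$, one obtains, in both regimes $1<p<2$ and $p\ge 2$,
\begin{equation}\label{eq:closeuvplan}
\Bigl(\fint_{B_1}|\Hnabla u-\Hnabla v|^p\,dx\Bigr)^{1/p}\;\le\;C\,\delta^{\tau}\,a\;=:\;\tilde\delta\,a,
\end{equation}
with $\tau=\min\{1/p,1/2\}$ and $C=C(\G,p)$; here $\tilde\delta$ can be made as small as desired by choosing $\sigma_0$ small and $M$ large.

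\medskip

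\noindent\textbf{Step 2: regularity of $v$ and production of the candidate $\bq$.} By the $C^{1,\alpha}$-regularity theory for $(\G,p)$-harmonic functions in Carnot groups of step two (invoking the estimates of \cite{CM22} announced in the introduction), there exist $\alpha\in(0,1)$ and $C=C(\G,p)$ such that, denoting $\bq:=\Hnabla v(e)$,
\[
\|\Hnabla v\|_{L^\infty(B_{1/2})}\le C\,a,\qquad
|\Hnabla v(x)-\bq|\le C\,a\,d_c(x,e)^\alpha\ \ \text{for }x\in B_{1/4}.
\]
In particular, for $\eta\in(0,1/4)$ to be chosen,
\begin{equation}\label{eq:vregplan}
\Bigl(\fint_{B_\eta}|\Hnabla v-\bq|^p\,dx\Bigr)^{1/p}\le C\,a\,\eta^\alpha.
\end{equation}
Combining \eqref{eq:closeuvplan} with \eqref{eq:vregplan} and the trivial bound $\fint_{B_\eta}\le \eta^{-Q}\fint_{B_1}$,
\[
\Bigl(\fint_{B_\eta}|\Hnabla u-\bq|^p\,dx\Bigr)^{1/p}
\le C\bigl(\eta^{-Q/p}\tilde\delta+\eta^{\alpha}\bigr)\,a.
\]
Choose $\eta$ so that $C\eta^\alpha\le \e/2$, and then $\sigma_0,M$ so that $C\eta^{-Q/p}\tilde\delta\le\e/2$; this yields
\begin{equation}\label{eq:uminusq}
\Bigl(\fint_{B_\eta}|\Hnabla u-\bq|^p\,dx\Bigr)^{1/p}\le \e\,a.
\end{equation}

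\medskip

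\noindent\textbf{Step 3: the dichotomy.} Set $C_0:=C+1$ where $C$ is the constant in the $L^\infty$ bound for $\Hnabla v$. Two cases arise:

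If $|\bq|\le a/4$, the triangle inequality applied to \eqref{eq:uminusq} gives
\[
\Bigl(\fint_{B_\eta}|\Hnabla u|^p\,dx\Bigr)^{1/p}\le |\bq|+\e\,a\le \tfrac{a}{4}+\e\,a\le \tfrac{a}{2}
\]
provided $\e\le \e_0:=1/4$, which yields alternative \eqref{alt1}.

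Otherwise $|\bq|>a/4$, and by the $L^\infty$ bound on $\Hnabla v$ we have $|\bq|=|\Hnabla v(e)|\le C a< C_0 a$, so the size condition \eqref{bound-q} holds, and \eqref{eq:uminusq} gives exactly alternative \eqref{alt2}.

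\medskip

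\noindent\textbf{Main obstacle.} The core difficulty is Step 2: the uniform $C^{1,\alpha}$ estimate (with the explicit scaling in $a$) for $(\G,p)$-harmonic functions on Carnot groups of step two. Unlike the Euclidean case, this is a highly nontrivial subelliptic regularity result, nonlinear \emph{and} noncommutative, and is the reason one has to invoke the estimates from \cite{CM22}; the rest of the argument is a linear recombination of this regularity with the almost minimality inequality and the energy identities of Lemma \ref{lemma-p-harm-replac}.
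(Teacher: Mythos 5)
Your proposal is correct and follows essentially the same path as the paper's proof: comparing $u$ to its $(\G,p)$-harmonic replacement $v$ via Lemma~\ref{lemma-p-harm-replac}, using the $L^\infty$ and $C^{0,\alpha}$ bounds from \cite{CM22} on $v$ to produce $\bq=\Hnabla v(e)$, choosing $\eta$ first (to tame the H\"older term) and then $\sigma_0,M$ (to tame $\tilde\delta$), and finally casing on the size of $|\bq|$. The only cosmetic difference is that you invoke Minkowski's inequality in the final dichotomy while the paper works with $p$-th powers and the constant $2^{p-1}$, which leads to identical conclusions.
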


\begin{proof} We split the proof into several steps.\medskip
	
	\noindent{\em Step 1: pointwise estimates. }
	Let $v\in HW^{1,p}(B_1)$ be the $(\G,p)$-harmonic replacement of $u$ in $B_1$. By Theorem 1.1 in \cite{CM22}, it exists a constant $C_0>0$, depending only on $p$ and $Q$, such that
	\begin{equation}\label{CM1}
		\sup_{B_{1/2}}\left|\Hnabla v\right|\leq C_0 \left( \fint_{B_1} |\Hnabla v|^p dx\right)^{1/p}.
	\end{equation}  
	Then, recalling \eqref{def_a}, by minimality condition of $v$ in Dirichlet energy \eqref{p-g-minim} together with \eqref{CM1}, we conclude that for every $x\in B_{1/2}$
	\begin{equation}\label{stima-grad-v}
		\left|\Hnabla v(x)\right|\leq C_0 a.
	\end{equation}  
	\medskip
	
	\noindent{\em Step 2: oscillation estimates.}
	By Theorem 2.1 in \cite{CM22}, we have the following oscillation estimates: for all $\eta\in(0,1/2)$ 
	\begin{equation}\label{CM2}
		\max_{1\leq i\leq m_1} \mathrm{osc}_{B_\eta}X_i v \leq c \bigg(\frac{\eta}{1/2}\bigg)^{\alpha} \left( \fint_{B_1} |\Hnabla v|^p dy\right)^{1/p},
	\end{equation} 
	for some $\alpha=\alpha(Q,p)\in (0,1]$ and some constant $c>0$ depending only on $p$ and $\G$.
	Therefore, for every $x\in B_\eta$ we have 
	\begin{equation}
		\max_{1\leq i\leq m_1} |X_i v(x)-X_iv(0)|\leq \max_{1\leq i\leq m_1} \mathrm{osc}_{B_\eta}X_i v \leq c \bigg(\frac{\eta}{1/2}\bigg)^{\alpha}\left( \fint_{B_1} |\Hnabla v|^p dy\right)^{1/p},
	\end{equation}
	thus, for any $i=1,\dots, m_1$ denoting by $q_i=X_iv(0)$ and by $\mathbf{q}:\G\to H\G$ the constant horizontal section defined as in \eqref{def-q}, it turns out
	\begin{equation}
		|\Hnabla v(x)-\mathbf{q}(x)|\leq c_1\bigg(\frac{\eta}{1/2}\bigg)^{\alpha} \left( \fint_{B_1} |\Hnabla v|^p dy\right)^{1/p},
	\end{equation}
	for any $x\in B_\eta$ and for some constant $c_1>0$ depending only on $p$ and $\G$. This gives that, for all $\eta\in (0,1/2]$,
	\begin{equation}\label{average-integral-nabla-v-q}\begin{split}
			\fint_{B_\eta}\left|\Hnabla v(x)-\bq(x)\right|^p\,dx\le\;& \fint_{B_\eta}\bigg(c^2_1\bigg(\frac{\eta}{1/2}\bigg)^{p\alpha} \fint_{B_1} |\Hnabla v (y)|^p \,dy\,\bigg)\,dx\\
			=\;& C_1\,\eta^{p\alpha} \fint_{B_1} |\Hnabla v |^p dx\\
			\le\;& C_1\,\eta^{p\alpha}\,a^p,
	\end{split}\end{equation}
	for some $\alpha\in(0,1]$ and $C_1>0$ depending only on $p$ and $\G$.	\medskip
	
	\noindent{\em Step 3: proximity to the $(\G,p)$-harmonic replacement.}
		By hypothesis \eqref{first} and the minimality of $v$ in \eqref{p-g-minim}, for all $\sigma <|B_1|$, we obtain
	\begin{equation}\label{diff-norm-grad}\begin{split}
				\int_{B_1}\big(\left|\Hnabla u(x)\right|^p-\left|\Hnabla v(x)\right|^p\big)\,dx		
				&\le J_p(u,B_1)-\int_{B_1}\left|\Hnabla v(x)\right|^p\,dx \\
				&\le  (1+\sigma)J_p(v,B_1)-\int_{B_1}\left|\Hnabla v(x)\right|^p\,dx \\
				&\le C\left(\sigma\int_{B_1}\left|\Hnabla v(x)\right|^p\,dx+1\right)\\
				&\le C\left(\sigma\int_{B_1}\left|\Hnabla u(x)\right|^p\,dx+1\right)
		\end{split}\end{equation}
    for some suitable positive constant $C$ depending only on $Q$.
    
    Now we distinguish two cases, $p\ge2$ and $1<p<2$. 

 \medskip
	
	\noindent{\em Step 3.1: the case $p\ge2$.}	
		If $p\ge 2$, then from \eqref{second-ineq-lemma-p-harm-repl} and \eqref{diff-norm-grad}
		we deduce that
		\begin{align*}
			\int_{B_1}\left|\Hnabla u(x)-\Hnabla v(x)\right|^p\,dx			
			&\le C\left(\sigma\int_{B_1}\left|\Hnabla u(x)\right|^p\,dx+1\right)
		\end{align*}
		for some suitable constant $C>0$ depending only on $p$ and $\G$. Consequently taking the average over $B_1$, we thereby obtain that
		\begin{equation*}
			\fint_{B_1}\left|\Hnabla u(x)-\Hnabla v(x)\right|^p\,dx\le C(\sigma a^p+1).
		\end{equation*}
		Using this and \eqref{average-integral-nabla-v-q}, we get that
		\begin{equation}\label{average-integral-nabla-u-q-p->-2}\begin{split}
				\fint_{B_\eta}\left|\Hnabla u(x)-\bq(x)\right|^p\,dx &\le2^{p-1}
				\fint_{B_\eta}\big(\left|\Hnabla u(x)-\Hnabla v(x)\right|^p+\left|\Hnabla v(x)-\bq(x)\right|^p\big)\,dx\\
				&\le 2^{p-1}\left(\frac{\left|B_1\right|}{\left|B_\eta\right|}\fint_{B_1}\left|\Hnabla u(x)-\Hnabla v(x)\right|^p\,dx+C_1\,\eta^{p\alpha }\,a^p\right)
				\\&\le 2^{p-1} \Big(C\eta^{-Q}(\sigma a^p+1)+C_1a^p\eta^{p\alpha}\Big),\\
				&= 2^{p-1}C \eta^{-Q}\sigma a^p+2^{p-1}C\eta^{-Q}+2^{p-1}C_1a^p\eta^{p\alpha}.
			\end{split}
		\end{equation}
		This yields that
		\begin{equation}\label{average-integral-nabla-u-to-p-B-eta-p->-2}
			\fint_{B_\eta}\left|\Hnabla u(x)\right|^p\,dx\le 2^{2(p-1)}C\eta^{-Q}\sigma a^p+2^{2(p-1)}C\eta^{-Q}+2^{2(p-1)}C_1a^p\eta^{\alpha p}+2^{p-1}\left|\bq\right|^p.
		\end{equation}
		 \medskip
	
	\noindent{\em Step 3.2: the case $1<p<2$.}		
		If $1<p< 2$, by virtue of \eqref{first-ineq-lemma-p-harm-repl}
		and \eqref{diff-norm-grad}, we obtain that
		\begin{align*}
			&\int_{B_1}\left|\Hnabla u(x)-\Hnabla v(x)\right|^p\,dx\\
			&\qquad\le\; C\left(\sigma
			\int_{B_1}\left|\Hnabla u(x)\right|^p\,dx+1\right)^{\frac{p}{2}}
			\left(\int_{B_1}\big(\left|\Hnabla u(x)\right|+\left|\Hnabla v(x)\right|\big)^p\,dx\right)^{1-\frac{p}{2}}\\
			&\qquad\le\; C \left(\sigma^{\frac{p}{2}}\left(\int_{B_1}\left|\Hnabla u(x)\right|^p\,dx\right)^{\frac{p}{2}}+1\right)\left(\int_{B_1}\big(\left|\Hnabla u(x)\right|^p+\left|\Hnabla v(x)\right|^p\big)\,dx\right)^{1-\frac{p}{2}}.
		\end{align*}
		Thus, since $v$ is the $(\G,p)$-harmonic replacement of $u$ in $B_1$,
		\begin{align*}
			&\int_{B_1}\left|\Hnabla u(x)-\Hnabla v(x)\right|^p\,dx\\
			&\qquad\le\; C\left(\sigma^{\frac{p}{2}}\left(\int_{B_1}\left|\Hnabla u(x)\right|^p\,dx\right)^{\frac{p}{2}}+1\right)
			\left(\int_{B_1}\left|\Hnabla u(x)\right|^p\,dx\right)^{1-\frac{p}{2}}\\
			&\qquad =\; C \left(\sigma^{\frac{p}{2}}\int_{B_1}\left|\Hnabla u(x)\right|^p\,dx+\left(\int_{B_1}\left|\Hnabla u(x)\right|^p\,dx\right)^{1-\frac{p}{2}}\right).
		\end{align*}
		Consequently, taking the average integral, we have that
		\begin{equation*}
			\fint_{B_1}\left|\Hnabla u(x)-\Hnabla v(x)\right|^p\,dx\le C\left(
			\sigma^{\frac{p}{2}}a^p+|B_1|^{-\frac{p}{2}}a^{p\left(1-\frac{p}{2}\right)}\right)
			\le C\left(\sigma^{\frac{p}{2}}a^p+a^{p\left(1-\frac{p}{2}\right)}\right).
		\end{equation*}
		{F}rom this and \eqref{average-integral-nabla-v-q}, we obtain that
		\begin{equation}\label{average-integral-nabla-u-q-1-<p-leq-2}\begin{split}
				\fint_{B_\eta}\left|\Hnabla u(x)-\bq\right|^p\,dx\le\;& 2^{p-1}
				\fint_{B_\eta}\big(\left|\Hnabla u(x)-\Hnabla v(x)\right|^p+\left|\Hnabla v(x)-\bq\right|^p\big)\,dx\\
				\le\;& 2^{p-1}\left(\frac{\left|B_1\right|}{\left|B_\eta\right|}\fint_{B_1}\left|\Hnabla u(x)-\Hnabla v(x)\right|^p\,dx+C_1\,\eta^{\alpha p}\,a^p\right)\\
				\le \;&
				2^{p-1}C\eta^{-Q}\sigma^{\frac{p}{2}}a^p+2^{p-1}C\eta^{-Q}a^{p\left(1-\frac{p}{2}\right)}
				+2^{p-1}C_1a^p\eta^{\alpha p}.
		\end{split}\end{equation}
		This gives that
		\begin{equation}\label{average-integral-nabla-u-to-p-B-eta-1<-p-leq-2}
			\fint_{B_\eta}\left|\Hnabla u(x)\right|^p\,dx\le 2^{2(p-1)}C\eta^{-Q}\sigma^{\frac{p}{2}} a^p+2^{2(p-1)}C\eta^{-Q}
			a^{p\left(1-\frac{p}{2}\right)}+2^{2(p-1)}C_1a^p\eta^{\alpha p}+2^{p-1}\left|\bq\right|^p.
		\end{equation}
		
		  \medskip
	
	\noindent{\em Step 4: perturbative estimates.}		
		Now, 
		given $\varepsilon_0\in(0, 1/4]$, we claim that 
		for every $\varepsilon\in (0,\epsilon_0)$ there exists $\eta$ small enough (depending on $\varepsilon$)
		such that if $\sigma$ is chosen sufficiently small and $a$ sufficiently large (depending on $\eta$,
		and thus on $\e$) then
		\begin{equation}\label{conditions-on-eta-sigma-a-p>2}
				2^{2(p-1)}C\eta^{-Q}\sigma a^p+2^{2(p-1)}C\eta^{-Q}+2^{2(p-1)}C_1a^p\eta^{\alpha p}\le 2^{p-1}\varepsilon^pa^p\le \frac{a^p}{2^{p+1}} \quad
				\mbox{if }p\ge 2,\\
		\end{equation}	
        while
            \begin{equation}\label{conditions-on-eta-sigma-a-1<p<2}				2^{2(p-1)}C\eta^{-Q}\sigma^{\frac{p}{2}} a^p+2^{2(p-1)}C\eta^{-Q}
				a^{p\left(1-\frac{p}{2}\right)}+2^{2(p-1)}C_1a^p\eta^{\alpha p}\le 2^{p-1}\varepsilon^pa^p\le \frac{a^p}{2^{p+1}} \quad 
				\mbox{if }1<p< 2.
		\end{equation}
To prove this we distinguish two cases.  \medskip
	
	\noindent{\em Step 4.1: the case $p\ge2$.}	
If $p\ge 2,$ we pick $\eta>0$ sufficiently small
such that $$\varepsilon^p-2^{p-1}C\eta> 2^{p-1}C_1\eta^{\alpha p}.$$ This allows us to define
		$$
		M:=\left(\frac{2^{p-1}C\eta^{-Q}}{\varepsilon^p-2^{p-1}C\eta-2^{p-1}C_1\eta^{\alpha p}}\right)^{1/p}
		.$$
		Note also that we can suppose $M\ge1$ by taking $\eta$ small enough.
		Let also 
		$$ \sigma_0:=\min\{|B_1|,\eta^{Q+1}\}.$$ With this setting, we obtain that, for every $a\geq M$ and for every $0<\sigma\leq \sigma_0,$
		\begin{eqnarray*}&&
2^{2(p-1)}C\eta^{-Q}\sigma a^p+ 2^{2(p-1)}C\eta^{-Q}+ 2^{2(p-1)}C_1a^p\eta^{\alpha p}\\&\leq
			& a^p\Big(  2^{2(p-1)}C\eta+ 2^{2(p-1)}C_1\eta^{\alpha p}\Big)
			+ 2^{2(p-1)}C\eta^{-Q}\\&=&
	a^p\Big(  2^{2(p-1)}C\eta+ 2^{2(p-1)}C_1\eta^{\alpha p}\Big)
			+2^{p-1}\,M^p\Big(\varepsilon^p-2^{p-1}C\eta-2^{p-1}C_1\eta^{\alpha p}\Big)
			\\
			&\leq& a^p\Big(  2^{2(p-1)}C\eta+ 2^{2(p-1)}C_1\eta^{\alpha p}\Big)
			+2^{p-1}\,a^p\Big(\varepsilon^p-2^{p-1}C\eta-2^{p-1}C_1\eta^{\alpha p}\Big)\\&=&
			2^{p-1}\varepsilon^p a^p+ a^p
			\Big(  2^{2(p-1)}C\eta+ 2^{2(p-1)}C_1\eta^{\alpha p}
			- 2^{2(p-1)} C\eta- 2^{2(p-1)}C_1\eta^{\alpha p}\Big)\\&=&2^{p-1}\varepsilon^p a^p,
		\end{eqnarray*}
		which proves \eqref{conditions-on-eta-sigma-a-p>2}.\medskip
	
	\noindent{\em Step 4.2: the case $1<p<2$.}		
		If instead $1<p<2,$ we pick $\eta>0$ small enough such that $$\varepsilon^p>2^{p-1}C\eta+2^{p-1}C_1\eta^{\alpha p}.$$ In this way, we can define
		$$M:=
		\left(\frac{C2^{p-1}\eta^{-Q}}{\varepsilon^p-2^{p-1}C\eta-2^{p-1}C_1\eta^{\alpha p}}\right)^{2/p^2}.$$
		Let also $\sigma_0:=\eta^{(Q+1)\frac{2}{p}}.$
Then, whenever $a\geq M$ and $0<\sigma\leq \sigma_0$ it follows that
		\begin{eqnarray*}
			&&\hspace{-5em} 2^{2(p-1)}C\eta^{-Q}\sigma^{\frac{p}{2}}a^p+ 2^{2(p-1)}C\eta^{-Q} a^{p\left(1-\frac{p}{2}\right)}+ 2^{2(p-1)}C_1a^p\eta^{\alpha p}\\&\leq& 2^{2(p-1)}C\eta a^p+ 2^{2(p-1)}C\eta^{-Q} a^{p\left(1-\frac{p}{2}\right)
			}+ 2^{2(p-1)}C_1a^p\eta^{\alpha p}\\&=&
			2^{p-1} a^p\Big( 2^{p-1}C\eta + 2^{p-1}C\eta^{-Q} a^{-\frac{p^2}{2}	}+ 2^{p-1}C_1\eta^{\alpha p}\Big)\\&\leq&
2^{p-1} a^p\Big( 2^{p-1}C\eta + 2^{p-1}C\eta^{-Q} M^{-\frac{p^2}{2}	}+ 2^{p-1}C_1\eta^{\alpha p}\Big)\\&=&
			2^{p-1} a^p\Big( 2^{p-1}C\eta + \varepsilon^p- 2^{p-1}C\eta- 2^{p-1}C_1\eta^{\alpha p}
			+ 2^{p-1}C_1\eta^{\alpha p}\Big)\\&=&  2^{p-1}\e^pa^p,
		\end{eqnarray*}
		which establishes \eqref{conditions-on-eta-sigma-a-1<p<2}.
		  \medskip
	
	\noindent{\em Step 5: conclusion of the proof.}		
		In order to complete the proof of Proposition \ref{dic},
		we now distinguish two cases according to the size of $\left|\bq\right|.$ More precisely, we first suppose that
		\[\left|\bq\right|\le\frac{a}{4}.\]
		Then, using \eqref{average-integral-nabla-u-to-p-B-eta-p->-2} and \eqref{conditions-on-eta-sigma-a-p>2} if $p\ge2$ or \eqref{average-integral-nabla-u-to-p-B-eta-1<-p-leq-2} and \eqref{conditions-on-eta-sigma-a-p>2} if $1<p<2$, we conclude that
		\begin{align*}
			\fint_{B_\eta}\left|\Hnabla u(x)\right|^p\,dx\le \frac{a^p}{2^{p+1}}+ 2^{p-1}\frac{a^p}{2^{2p}}=\frac{a^p}{2^{p+1}}+\frac{a^p}{2^{p+1}}=\frac{a^p}{ 2^{p}},
		\end{align*}
		and thus
		\[\left(\fint_{B_\eta}\left|\Hnabla u(x)\right|^p\,dx\right)^{1/p}\le\frac{a}{2},\]
		which is the first alternative in \eqref{alt1}.
		
		Otherwise, it holds that
		\[ \frac{a}{4}<\left|\bq\right|\le C_0\,a,\]
		and therefore, by either \eqref{average-integral-nabla-u-q-p->-2} and \eqref{conditions-on-eta-sigma-a-p>2} if $p\ge2$ or \eqref{average-integral-nabla-u-q-1-<p-leq-2} and \eqref{conditions-on-eta-sigma-a-1<p<2} if $1<p<2$, we conclude that
		\[\left(\fint_{B_\eta}\left|\Hnabla u(x)-\bq(x)\right|^p\,dx\right)^{1/p}\le \varepsilon a,\]
		which is the second alternative in \eqref{alt2}.
		The proof of Proposition \ref{dic} is thereby complete.
	\end{proof}	

 \section{Regularity estimates for a subelliptic equation with variable coefficients}\label{sec:reg} 
 Now, our aim is to prove  that the alternative in \eqref{alt2}
 can be \emph{improved} when $\varepsilon$ and $\sigma$ are sufficiently small.
 On the other hand, differently from what happens when $p=2$ (see Lemma 3.2 in \cite{FFM}), here the main difficulty relies on the fact that the problem is not linear when $p\neq2$. Therefore, for example, if $v_1$ and $v_2$ are the $(\G,p)$-harmonic replacements of $u_1$
	and $u_2$ in $\Omega\subset\G$, then it is not true that $v_1+v_2$
	is the $(\G,p)$-harmonic replacement of $u_1+u_2$, unless $p=2$. 
	
	To overcome this difficulty, our goal is to show that affine perturbation of $(\G,p)$-harmonic replacements satisfied a suitable subelliptic equation. With this purpose, in this section we prove some $C^{1,\alpha}$ regularity estimate that will we useful to obtain the counterpart of Lemma 3.2 in \cite{FFM} in our nonlinear setting.
 
	More precisely, we will show the following result.
	
	\begin{thm}\label{thm:c1au}
		Let $\G$ be a Carnot group of step 2 and let $\Omega\subset\G$ be an open set. Let $u\in HW^{1,2}(\Omega)$, be a weak solution of 
        \begin{equation}\label{eq-div-var-coeff}
                \mathrm{div}_\G \big(A(x)\Hnabla u(x))=0, \quad \text{for } x\in\Omega,
            \end{equation}
        where $A:\Omega \to \R^{m_1\times m_1}$ satisfies the following structure condition, for any $x,y\in\Omega$ 
        \begin{eqnarray}
        \label{st.cond1}
            &\nu |\xi|^2\leq \left\langle A(x)\xi,\xi\right\rangle \leq L|\xi|^2, \quad \text{for all }\xi\in H\G_x \\ \vspace{5em}
        \label{st.cond2}
            &\hspace{-4em}|A(x)-A(y)|\le L'd(x,y)^\alpha
        \end{eqnarray}
        for some $\nu>0$ and $L,L'\geq 1$ and $\alpha\in (0,1]$. Then $\Hnabla u\in C^{0,\gamma}_{loc}(\Omega)$, for some $\gamma=\gamma(\G,\nu,L,\alpha)\in(0,1)$. Moreover, there exists a constant $c=c(\G,\nu,L)>0$ such that for any $x_0\in\Omega$ it exists $\bar R=\bar R(\G,\nu,L,L',\alpha, \mathrm{dist}(x_0,\partial\Omega))>0$ such that for any $x,y\in B_{R}(x_0)\subset\Omega$, with $0<R\leq \bar R$ it follows
\begin{equation}\label{eq:teo-nuovo}
\max_{1\leq i\leq m_1}|X_i(x)-X_i(y)|\leq c\,d_c(x,y)^\gamma \fint_{B_R(x_0)}|\Hnabla u(x)|\, dx.
\end{equation}
\end{thm}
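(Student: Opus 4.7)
The proof will follow the Campanato-type frozen-coefficient strategy of \cite{min,DM,SS}, adapted from the Euclidean and Heisenberg settings to a Carnot group of step $2$. The plan is, for each $x_0\in\Omega$ and each sufficiently small radius $r$, to compare $u$ on $B_r(x_0)$ with the solution $h$ of the equation whose coefficient matrix is frozen at $x_0$, and then to derive a power decay of the horizontal gradient excess of $u$. The Campanato characterisation in Theorem \ref{theo-isomorph-Campanato-spaces-Holder-spaces} will then convert this into the desired H\"older estimate for $\Hnabla u$.

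The first step is to establish interior $C^{1,\alpha_0}$ regularity for the frozen operator $\mathcal{L}_{A_0}v:=\divg(A(x_0)\Hnabla v)$. Symmetrising the principal part if necessary and diagonalising the constant matrix $A(x_0)$, one can write $\mathcal{L}_{A_0}=-\sum_{j=1}^{m_1}\lambda_j Y_j^2$ where $Y_1,\dots,Y_{m_1}$ is a linear change of basis of the horizontal layer. These new fields still bracket-generate the whole Lie algebra, so the resulting constant-coefficient operator is sub-elliptic in H\"ormander's sense and its solutions are smooth. The oscillation estimate of \cite{CM22} (or equivalently the Folland--Stein theory for constant-coefficient sub-Laplacians) then yields, for every $\rho\in(0,r/2]$,
\begin{equation*}
\max_{1\le i\le m_1}\osc_{B_\rho(x_0)} X_i h \;\le\; c\Big(\frac{\rho}{r}\Big)^{\alpha_0}\Big(\fint_{B_r(x_0)}|\Hnabla h|^2\,dx\Big)^{1/2},
\end{equation*}
with $\alpha_0=\alpha_0(\G,\nu,L)\in(0,1]$.

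Next comes the comparison step. Let $h$ solve $\mathcal{L}_{A_0}h=0$ in $B_r(x_0)$ with $h-u\in HW^{1,2}_0(B_r(x_0))$; then $w:=u-h\in HW^{1,2}_0(B_r(x_0))$ and
\begin{equation*}
\divg\bigl(A(x_0)\Hnabla w\bigr)=\divg\bigl((A(x_0)-A(\cdot))\Hnabla u\bigr).
\end{equation*}
Testing with $w$ itself, using ellipticity of $A(x_0)$ on the left and the H\"older bound \eqref{st.cond2} together with Cauchy--Schwarz and Young's inequalities on the right, produces the energy estimate
\begin{equation*}
\int_{B_r(x_0)}|\Hnabla w|^2\,dx \;\le\; C\,r^{2\alpha}\int_{B_r(x_0)}|\Hnabla u|^2\,dx,
\end{equation*}
with $C=C(\nu,L')$. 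Choosing, for each $\rho\in(0,r/2]$, the constant horizontal section $\mathbf{h_{\rho}}$ whose components are the averages of $\Hnabla h$ over $B_\rho(x_0)$, and combining the previous oscillation bound for $h$ with the decomposition $\Hnabla u=\Hnabla h+\Hnabla w$, one obtains the excess inequality
\begin{equation*}
\fint_{B_{\tau r}(x_0)}|\Hnabla u-\mathbf{h_{\tau r}}|^2\,dx \;\le\; c_1\,\tau^{2\alpha_0}\fint_{B_r(x_0)}|\Hnabla u|^2\,dx+c_2\,\tau^{-Q}r^{2\alpha}\fint_{B_r(x_0)}|\Hnabla u|^2\,dx,
\end{equation*}
for every $\tau\in(0,1/2)$. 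A standard Campanato--Giaquinta iteration lemma (fix $\tau$ small enough to dominate the first term, then restrict to $r\le \bar R$ to handle the second, perturbative one) upgrades this to a decay $\rho\mapsto\rho^{2\gamma}$ of the $L^2$ excess, with some $\gamma=\gamma(\G,\nu,L,\alpha)\in(0,1)$. The seminorm representation \eqref{equivalent-Campanato-seminorm} and Theorem \ref{theo-isomorph-Campanato-spaces-Holder-spaces} convert this decay into $\Hnabla u\in C^{0,\gamma}_{\mathrm{loc}}(\Omega,H\G)$ with the quantitative bound \eqref{eq:teo-nuovo}; the $L^1$ average appearing on the right-hand side of \eqref{eq:teo-nuovo}, rather than an $L^2$ one, is recovered by a Caccioppoli inequality for weak solutions of \eqref{eq-div-var-coeff} combined with a sub-Riemannian Poincar\'e--Sobolev estimate on $\G$.

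The main obstacle is precisely the step-$2$ analogue of the constant-coefficient regularity needed in the first step. As pointed out in the introduction, $X_i$ does not commute with $\mathcal{L}_{A_0}$ because $[X_i,X_j]\ne0$ in general, so the Euclidean argument of differentiating the equation and applying De Giorgi--Nash--Moser directly to $X_iu$ fails and produces extra vertical contributions. These must be controlled through horizontal second-order difference quotients in the spirit of Xu--Zuily, as already carried out in \cite{SS} for the Heisenberg group; the estimates thus obtained are quantitatively weaker than in the Euclidean setting but, as the introduction already anticipates, they yield an exponent $\alpha_0$ sufficient to close the iteration and deliver the conclusion with $\gamma<\min\{\alpha_0,\alpha\}$.
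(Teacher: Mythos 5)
Your overall plan — freeze the coefficients at $x_0$, compare $u$ with the constant-coefficient solution $h$, derive an excess-decay inequality, run a Campanato iteration, and translate into H\"older via Theorem \ref{theo-isomorph-Campanato-spaces-Holder-spaces} — is the same strategy the paper follows, and it also cites the same Mingione/Duzaar--Mingione/Mukherjee--Sire line you refer to. Your two building blocks are essentially Proposition \ref{prop:intosc0} and Lemma \ref{lem:frzcomp} of the paper: the CM22 interior oscillation estimate for $X_ih$ in the frozen equation, and the energy comparison $\int_{B_r}|\Hnabla(u-h)|^2\le Cr^{2\alpha}\int_{B_r}|\Hnabla u|^2$. (One stylistic difference: the paper carries out everything at the $L^1$ level, which forces a more delicate De Giorgi-class argument in Proposition \ref{prop:intosc0} to convert $\sup$-oscillation control into $L^1$-excess control; your $L^2$ framework avoids that but defers the $L^2\to L^1$ conversion to the end, as you note.)

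However, the iteration step as you have written it does not close, and this is the substantive gap. Your excess inequality
\begin{equation*}
\fint_{B_{\tau r}(x_0)}|\Hnabla u-\mathbf{h_{\tau r}}|^2\,dx \;\le\; c_1\,\tau^{2\alpha_0}\fint_{B_r(x_0)}|\Hnabla u|^2\,dx+c_2\,\tau^{-Q}r^{2\alpha}\fint_{B_r(x_0)}|\Hnabla u|^2\,dx
\end{equation*}
carries on the right the \emph{full average} $\fint_{B_r}|\Hnabla u|^2$, not the excess. The Campanato--Giaquinta perturbation lemma you invoke requires the same quantity on both sides, or else a perturbation term bounded by $Br^\beta$ with $B$ independent of $r$. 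Neither is the case here: the quantity $r^{2\alpha}\fint_{B_r}|\Hnabla u|^2$ is not a priori under control as $r\to0$, precisely because we have not yet shown $\Hnabla u\in L^\infty_{\mathrm{loc}}$ — that is what we are trying to prove. ``Fix $\tau$ small, then restrict $r$'' makes both prefactors small, but the recursion still drifts because one cannot re-absorb $\fint_{B_r}|\Hnabla u|^2$ into the excess.

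The paper repairs this in two places that your sketch is missing. First, Lemma \ref{lem:amlip} and Proposition \ref{prop:Aml} establish a separate Morrey-type growth estimate $\int_{B_r}|\Hnabla u|\le c(r/R)^{Q-\bar\varepsilon}\int_{B_R}|\Hnabla u|$ for every $\bar\varepsilon\in(0,Q)$, obtained from the $L^\infty$--$L^1$ bound on the frozen solution together with a genuine Giaquinta perturbation lemma (legitimate here because the same quantity $\int_{B_r}|\Hnabla u|$ appears on both sides). Second, in the proof of Theorem \ref{thm:c1au} the two decay rates are combined with a \emph{nested choice of scales} $r=\varrho^\kappa$, $\kappa\in(0,1)$, tuned so that both resulting exponents exceed $Q+\lambda$; this double-scale matching is exactly the device that replaces the direct iteration you were hoping for, and it is what ultimately determines the admissible $\gamma$. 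Without Proposition \ref{prop:Aml} and the $r=\varrho^\kappa$ trick, your argument stalls after the comparison step.
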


\begin{remark}
The regularity and apriori estimates of the homogeneous equation corresponding to \eqref{eq-div-var-coeff} with 
freezing of the coefficients is a key tool. Fixed $x_0\in\Omega$, we consider the equation 
\begin{equation}\label{eq:frzeq}
\divg (A(x_0)\Hnabla u) = 0 \quad\text{in}\ \Omega. 
\end{equation}
The $C^{1,\alpha}$ regularity of sub-elliptic equation of the form \eqref{eq:frzeq} within Carnot Groups of step two, under assumption \eqref{st.cond1} and \eqref{st.cond2}, has been dealt with in \cite{CM22} (see also \cite{MZ} in the case of Heisenberg group). In particular, in Theorem 1.1 in \cite{CM22}, similar as the Euclidean case (see \cites{DiB,Man86}), the following local estimate can be shown by using Sobolev's inequality and
Moser's iteration on the Caccioppoli type inequalities: for any $0<\sigma<1$ and metric ball $B_R=B_R(x_0)\subset \Omega$, it results
\begin{equation}\label{eq:CM22p-2}
\sup_{B_{\sigma R}}\ |\Hnabla u| \leq c (1-\sigma)^{-\frac{Q}{2}} \bigg(\fint_{B_R}|\Hnabla u(x)|^2\,dx\bigg)^\frac{1}{2}
\end{equation}
for some $c=c(\G,\nu,L)>0$. Furthermore, by another standard iteration argument proved in Lemma 3.38. in \cite{M-book}, for any $0<q<2$, $0<\sigma<1$ and metric ball $B_R=B_R(x_0)\subset \Omega$, we obtain the following estimate
\begin{equation}\label{cm-q=1}
\sup_{B_{\sigma R}}\ |\Hnabla u| \leq c (1-\sigma)^{-\frac{Q}{q}} \bigg(\fint_{B_R}|\Hnabla u(x)|^q\,dx\bigg)^\frac{1}{q}
\end{equation}
for some $c=c(\G,\nu,L,q)>0$.
\end{remark} 

We recall the notion of De Giorgi's class of functions in this setting, which would be required for Proposition \ref{prop:intosc0}.

\begin{defin}[De Giorgi's class]
For any $x\in\G$, and any metric ball $B_{\rho_0}(x)\subset \G$, the De Giorgi's class $DG^+(B_{\rho_0})$ consists of functions $w\in HW^{1,2}(B_{\rho_0}(x))\cap L^\infty(B_{\rho_0}(x))$, which satisfy the inequality 
\begin{equation}\label{eq:DG}
\int_{B_{\rho'}(x)}|\Hnabla (w(x)-k)^+|^2\, dx\le
\frac{\gamma}{(\rho-\rho')^2}
\int_{B_\rho(x)}|(w(x)-k)^+|^2\, dx+\chi^2|
A_{k,\rho}^+(w)|^{1-\frac{2}{Q}+\epsilon}
\end{equation}
for some $\gamma, \chi,\epsilon>0$, 
where $ A_{k,\rho}^+(w)=\{ x\in B_\rho: (w(x)-k)^+=\max (w(x)-k,0)>0\}$ for any arbitrary $k\in\R$, and  $0<\rho'<\rho\le \rho_0$. The class $DG^-(B_{\rho_0}(x))$ are similarly defined replacing $(w-k)^+$ with $(w-k)^-$ in \eqref{eq:DG}. We set $DG(B_{\rho_0}(x)):=DG^+(B_{\rho_0}(x))\cap DG^-(B_{\rho_0}(x))$.   
\end{defin}

\subsection{Comparison estimates}
In this subsection, we prove some comparison estimates that will be useful in the proof of Theorem \ref{thm:c1au}. We follow closely the approach of \cite{DM} in the Euclidean case, and of \cite{SS} for the Heisenberg group. From here on out, $\G$ will a Carnot group of step two and we denote $u\in HW^{1,2}(\Omega)$ as a weak solution of \eqref{eq-div-var-coeff}.

As a first step, relying on the regularity results in \cite{CM22}, we prove an integral oscillation decay estimate of the solution of the equation corresponding to \eqref{eq-div-var-coeff} with freezing coefficient, see \eqref{eq-froz} below.

\begin{prop}\label{prop:intosc0}
        Let $x_0\in\Omega$, and $r_0>0$ such that $=B_{r_0}(x_0) \subset \Omega$. Let $u\in C^{1,\sigma}(\Omega)$  for some $\sigma\in (0,1]$ be a weak solution of the equation corresponding to \eqref{eq-div-var-coeff} with \emph{freezing} coefficients in $x_0$, i.e. 
        \begin{equation}\label{eq-froz}
            \divg(A(x_0)\Hnabla u(x))=0, \quad \text{for }x\in \Omega.
        \end{equation}
        Then there exist $c=c(\G,L)>0$, and $\beta=\beta(\G)>0$ and such that for all $0<\varrho<r<r_0$, we have 
        \begin{equation}\label{int-osc-decay-freez-eq}
            \fint_{B_\varrho(x_0)}|\Hnabla u(x) - (\Hnabla u)_{B_\varrho(x_0)}|\,dx \leq c\left(\frac{\varrho}{r}\right)^\beta \Big(\fint_{B_r(x_0)}|\Hnabla u(x) - (\Hnabla u)_{B_r(x_0)}|\,dx + \chi r^\beta\Big)
        \end{equation}
        with $\chi:= M(r_0)/r_0^\beta$, where $$M(r_0)=\max_{1\le i\le m_1}\sup_{B_{r_0}(x_0)} |X_i u|.$$
        \end{prop}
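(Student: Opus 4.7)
The plan is to combine the pointwise H\"older regularity available for $\Hnabla u$ (from \cite{CM22}, since \eqref{eq-froz} has constant coefficients) with a Campanato-type argument to recover the integral oscillation decay. The additive error term $\chi r^\beta$ will arise from the noncommutativity of the generators of $V_1$, which produces second-layer contributions when one differentiates the equation.

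First I would exploit affine invariance: by Proposition \ref{rap-lin}, the horizontal gradient of any $\G$-affine function $\ell$ is a constant horizontal section, and hence $\divg(A(x_0)\Hnabla\ell)=0$ trivially. Therefore, if $u$ solves \eqref{eq-froz}, so does $u-\ell$. For fixed $r<r_0$, select $\ell$ so that $\Hnabla\ell$ equals the average $(\Hnabla u)_{B_r(x_0)}$ (interpreted as a constant horizontal section), and set $\tilde u := u-\ell$. With this reduction, it suffices to prove that $\Hnabla\tilde u$ has Campanato-type decay governed by its own $L^1$-mean on $B_r(x_0)$, plus the inhomogeneity $\chi r^\beta$.

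Next, applying the oscillation estimate of \cite{CM22}*{Theorem 2.1} to $\tilde u$, one obtains an exponent $\beta=\beta(\G)\in(0,1)$ and a constant $c=c(\G,L)$ such that
\begin{equation*}
\max_{1\le i\le m_1}\osc_{B_\varrho(x_0)}X_iu\le c\,(\varrho/r)^\beta\,\Big(\fint_{B_{r/2}(x_0)}|\Hnabla u-(\Hnabla u)_{B_r(x_0)}|^2\,dx\Big)^{1/2}
\end{equation*}
for all $0<\varrho\le r/4$. To pass from the $L^2$-mean to the $L^1$-mean on the right-hand side, each $X_i\tilde u$ must be shown to belong to a De Giorgi class of the form \eqref{eq:DG}; Moser-type iteration in that class (as in \cite{DM,SS}) then controls the $L^\infty$-norm of $X_i\tilde u$ by its $L^1$-mean on a slightly larger ball, and an interpolation of $L^2$ between $L^1$ and $L^\infty$ yields the desired replacement. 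Combining this with the elementary bound $\fint_{B_\varrho}|\Hnabla u-(\Hnabla u)_{B_\varrho}|\le 2\,\osc_{B_\varrho}\Hnabla u$ produces \eqref{int-osc-decay-freez-eq}.

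The main obstacle, and the source of the additive term $\chi r^\beta$, is precisely the verification that $X_i\tilde u$ belongs to the appropriate De Giorgi class. When one differentiates \eqref{eq-froz} along $X_i$ to produce the Caccioppoli inequality for $X_iu$, the step-two noncommutativity generates commutator terms $[X_i,X_j]\in V_2$. These are not purely horizontal and therefore cannot be absorbed into the ellipticity; instead, they contribute an inhomogeneity whose $L^\infty$-norm on $B_{r_0}(x_0)$ is controlled by $M(r_0)$. Tracking this inhomogeneity through the Moser iteration produces the constant $\chi=M(r_0)/r_0^\beta$ in the final estimate. This step-two analogue of the argument in \cite{SS}, which was carried out explicitly for the single commutator $[X_1,X_2]=T$ in the Heisenberg case, is the technical heart of the proof.
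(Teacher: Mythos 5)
Your proposal identifies the correct ingredients and the right philosophy: affine subtraction to reduce to the oscillation of $\Hnabla u$ around its average, the oscillation estimates of \cite{CM22}, De Giorgi classes and Moser-type $L^1\!\to\!L^\infty$ estimates, and the identification of noncommutativity as the source of the additive term $\chi r^\beta$. The iteration of the single-scale oscillation bound to obtain the geometric decay is also in the paper (there via \cite{CM22}*{Lemma~5.9} and Giusti's iteration lemma, rather than via the global Theorem~2.1, but this is an inessential difference).

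However, there is a genuine gap at the technical heart. You assert, essentially unconditionally, that each $X_i\tilde u$ belongs to a De Giorgi class on which Moser iteration applies, and then you close the $L^2\!\to\!L^1$ passage by interpolation. The paper does not claim this unconditionally. The crux of its proof is the inequality $\omega(\delta r)\le c\bigl(I(r)+\chi r^\beta\bigr)$, where $\omega$ is the sup-oscillation and $I$ the $L^1$-oscillation, and this is established through a dichotomy lifted from the proof of \cite{CM22}*{Lemma~5.9}: in Case~1 a measure condition on one of the level sets $\{X_lu\gtrless \pm M/4\}$ is assumed small, and only then is $X_ju\in DG^\pm(B_{2r'})$ deduced (via \cite{CM22}*{Cor.~5.6, Lem.~5.7}), giving the local boundedness estimates \eqref{supdg}--\eqref{infdg} with error $\chi r'^\beta$. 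In Case~2, where the measure condition fails, the De Giorgi class machinery is not invoked at all; instead the argument uses measure lower bounds to relate $\omega(4r')$ to $M(4r')$, distinguishes whether the size of $(\Hnabla u)_{B_r}$ dominates $\omega(4r')$, and in the remaining subcase falls back on the $L^1\!\to\!L^\infty$ bound for $|\Hnabla u|$ (not for a shifted $X_iu-\vartheta$) from \eqref{cm-q=1}. So the dichotomy is not a cosmetic feature — it handles exactly the situation where your proposed De Giorgi class verification would not go through. Your sketch should make explicit how $\omega(\delta r)\le c(I(r)+\chi r^\beta)$ is obtained, rather than presupposing De Giorgi class membership across the board; as written, the proposal silently collapses two cases that the paper must treat by distinct arguments.
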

        \begin{proof}
        Since in the proof we will consider all concentric balls centred in $x_0\in\G$, for the seek of brevity we will denote $B\varrho=B_\varrho(x_0)$ for every $\varrho>0$. Moreover let us denote by 
        \begin{equation}\label{def-M-omega-I}
            M(\varrho):=\max_{1\le i\le m_1}\sup_{B_{\varrho}} |X_i u|, \quad \omega(\varrho):=\max_{1\le i\le m_1}\osc_{B_{\varrho}} X_i u \quad\text{and}\quad I(\varrho) = \fint_{B_\varrho}|\Hnabla u - (\Hnabla u)_{B_\varrho}|\,dx
        \end{equation}
        for every $0<\varrho<r_0$. By definition follows trivially that 
        \begin{equation}\label{prop-omega-m}
            \omega(\varrho)=\max_{1\le i\le m_1}\osc_{B_{\varrho}}X_iu=\max_{1\le i\le m_1}\sup_{B_{\varrho}}X_iu-\inf_{B_{\varrho}}X_iu\leq \max_{1\le i\le m_1}2 \sup_{B_{\varrho}}X_iu=2M(\varrho).
        \end{equation}
      Now, since the operator appearing in \eqref{int-osc-decay-freez-eq} has constant coefficients, the equation \eqref{int-osc-decay-freez-eq} has been dealt with in those in \cite{CM22}. Thus applying the decay oscillation Lemma proved in \cite{CM22}*{Lemma 5.9} we know there there exists a constant $s\geq 0$ depending only on $\G$, such that for every $0<r\leq r_0/16$, we have 
        \begin{equation}\label{lemm5-6-CM}
            \omega(r) \leq (1-2^{-s})\,\omega(8r) + 2^s M({r_0})\Big(\frac{r}{{r_0}}\Big)^{\frac 1 2},
        \end{equation}
        for all $\beta=\beta\in(0,1/2]$. Up to take $\beta\in(0,1/2]$ sufficiently small, since $\omega=\omega(\varrho)$ is a positive and increasing function for every $\varrho>0$, we are in position to apply the standard iteration scheme proved in \cite{G}*{Lemma 7.3} on \eqref{lemm5-6-CM}, which implies that for any $0<\varrho<r\leq r_0$, we get
        \begin{equation}\label{3-4-M-S}
        \begin{aligned}
            \omega(\varrho)\leq c\Big(\left(\frac{\varrho}{r}\right)^\beta\omega(r)+\chi\varrho^\beta\Big)=c\left(\frac{\varrho}{r}\right)^\beta \big(\omega(r)+\chi r^\beta\big)
        \end{aligned}
        \end{equation}
        whit $\chi= M(r_0)/r_0^{\beta}$ and some constant $c>0$ depending only on $\G$. We notice that if $\varrho \leq \delta r$ for some $\delta\in (0,1)$, since, up to some constant $c>0$ depending only on $\G$, we have
        \begin{equation*}
            I(\varrho)=\fint_{B_\varrho}|\Hnabla u(x) - (\Hnabla u)_{B_\varrho}|\,dx \leq\fint_{B_\varrho}\fint_{B_\varrho}|\Hnabla u(x) - \Hnabla u(y)| \,dx\,dy \leq c\, \omega(\varrho)
        \end{equation*}
        we can conclude from \eqref{3-4-M-S} that 
        \begin{equation}\label{3-5-M-S}
            I(\varrho) \leq c\,\omega(\varrho) \leq 
            c\,\delta^{-\beta}\left(\frac{\varrho}{r}\right)^\beta\big(\omega(\delta r)+\chi r^\beta\big).
        \end{equation}
        for some $c>0$ depending only on $\G$. Now we claim that, to conclude the proof it is enough to prove that there exists $\delta\in (0,1)$, depending only on $\G$ such that the inequality  
        \begin{equation}\label{claim-m-s}
            \omega(\delta r) \leq c (I(r) +\chi r^{\beta})
        \end{equation}
    holds for some universal $c>0$. Indeed combing \eqref{claim-m-s} with \eqref{3-5-M-S}, we get \eqref{int-osc-decay-freez-eq}. 

    To prove the claim in \eqref{claim-m-s}, let us fix $r':=\delta r$, where $\delta\in(0,1)$ is to be chosen precisely later on. At first, we notice that there is no loss of generality by assuming that 
    \begin{equation}\label{assMS}
      \omega(r)\geq M(r_0) \left(\frac r {r_0}\right)^{\beta}, 
    \end{equation}
    since, otherwise \eqref{claim-m-s} trivially holds. Similarly, as in \cites{CM22,SS,DM} we consider the following complementary cases. \medskip
	
\noindent{\em Case 1}: For at least one index $l\in\{1,\ldots,m_1\}$, we have 
\begin{equation*}
  \text{either} \quad \Big|B_{4r'}\cap\Big\{X_lu<\frac{M(4r')}{4}\Big\}\Big|
 \leq \theta |B_{4r'}| \ \ \text{or}\ \ \Big|B_{4r'}\cap\Big\{ X_lu>-\frac{M(4r')}{4}\Big\}\Big|
  \leq \theta |B_{4r'}|. 
 \end{equation*}
where in $\theta>0$ is the universal constant in \cite{CM22}*{Corollary 5.6}. Similarly to the proof of Case 1 in the proof of \cite{CM22}*{Lemma 5.9}, combing \cite{CM22}*{Corollary 5.6} and \cite{CM22}*{Lemma 5.7} we conclude that for any $j=1,\ldots,m_1$, $X_ju$ belong to the De Giorgi class $DG^+(B_{2r'})$. Now by replacing $(X_lu-k)^+$ with $(X_lu-k)^-$ and
 $A^+_{k,r}(X_lu)$ with $A^-_{k,r}(X_lu)$, we car apply again \cite{CM22}*{Lemma 5.7} to conclude that for any $j=1,\ldots,m_1$, so $X_ju$ belong to the De Giorgi class $DG^-(B_{2r'})$ as well, $X_ju\in DG(B_{2r'})$ for every $j=1,\ldots,m_1$. Thus, in particular, the following local boundedness estimates hold  
\begin{align}
\label{supdg}
&\sup_{B_{r'}}(X_ju-\vartheta) \leq c\Big(\fint_{B_{2r'}}(X_ju-\vartheta)^+\,dx +\chi r'^\beta\Big), \\
\label{infdg}
&\sup_{B_{r'}}(\vartheta-X_ju) \leq c\Big(\fint_{B_{2r'}}(\vartheta-X_ju(x))^+\,dx +\chi r'^\beta\Big),
\end{align}
for any $\vartheta < M(r')$ and $j=1,\ldots,m_1$. Now, adding \eqref{supdg} and \eqref{infdg} with $\vartheta = (X_iu)_{B_{r'}}$, we get 
$$ \osc_{B_{r'}}X_ju\leq c\Big(\fint_{B_{2r'}}|X_ju(x)-(X_ju)_{B_{r'}}|\,dx +\chi r'^\beta\Big)\leq c\big(I(r)+\chi r^\beta\big)$$
for some $c=c(\G)>0$ and $\delta<1/2$, which completes the proof fo claim \eqref{claim-m-s} in this case. \medskip
	
\noindent{\em Case 2}: If Case 1 does not occur, for every $j=1,\ldots,m_1$, we have 
\begin{equation}\label{case-2-m-s}
  \Big|B_{4r'}\cap\Big\{X_ju<\frac{M(4r')}{4}\Big\}\Big|
 > \theta |B_{4r'}| \ \ \text{and}\ \ \Big|B_{4r'}\cap\Big\{ X_ju>-\frac{M(4r')}{4}\Big\}\Big|
  > \theta |B_{4r'}|. 
 \end{equation}
with $\theta>0$ as in Case 1.  By \eqref{case-2-m-s} we have 
\begin{equation}\label{sup-inf-m-s}
\inf_{B_{4r'}}X_ju\leq M(4r')/4 \quad \text{and}\quad \sup_{B_{4r'}}X_iu\geq -M(4r')/4
\end{equation}
for every $j=1,\ldots,m_1$, thus, by definition, we get  
\begin{equation}\label{ocsup}
\omega(4r')\geq M(4r')-M(4r')/4=3M(4r')/4.
\end{equation}
Now, let us suppose that $$K:=\max_{1\leq j\leq m_1}|(X_ju)_{B_r}| = |(X_ku)_{B_r}| \quad \text{for some}\quad k\in\{1,\ldots,m_1\}.$$ Then we observe that, if $K>2\omega(4r')$, by \eqref{ocsup} and \eqref{sup-inf-m-s},  for $x\in B_{4r'}$ we have
\begin{equation}\label{3-9-bis-m-s}
    \begin{split}
    |(X_ku)_{B_r}|-|X_ku(x)|\geq K-\inf_{B_{4r'}}X_ju >2\omega(4r')-\frac{M(4r')}{4} \geq  \frac{3M(4r')}{2}-\frac{M(4r')}/4 \geq \frac{M(4r')}{2}
    \end{split}
\end{equation}
Finally, choosing of $\delta<1/4$, \eqref{prop-omega-m} and \eqref{3-9-bis-m-s} imply
\begin{equation}\label{c21}
I(r)+\chi r^\beta\geq I(r)\geq c\fint_{B_{4r'}}|X_k u(x)-(X_ku)_{B_r}|\,dx \geq 
\frac{c}{2}\,M(4r')\geq \frac{c}{4}\,\omega(4r')\geq \frac{c}{4}\,\omega(r')
\end{equation}
for some constant $c>0$ depending only on $\G$. This implies the claim \eqref{claim-m-s}, when $K>2\omega(4r')$.

Let's move on to dealing with the case when $K\leq 2\omega(4r')=2\omega(4\delta r)$. Choosing $\delta<1/8$ and $q=1$ in \eqref{cm-q=1}, we conclude
\begin{equation}\label{3-10-bis-m-s}
   \omega(r/2)\leq 2M(r/2)(r/2)\leq c\fint_{B_r}|\Hnabla u(x)|\,dx
\end{equation}   
for some universal constant $c>0$ depending only on $\G$. Now, using \eqref{3-4-M-S}, \eqref{3-10-bis-m-s} and the fact that $K\leq 2\omega(4r')=2\omega(4\delta r)$ we obtain
\begin{equation}\label{llest1}
\begin{aligned}
\omega(4\delta r)&\leq c(8\delta)^\beta (\omega(r/2)+\chi r^\beta)\leq 
c\delta^\beta\Big(\fint_{B_r}|\Hnabla u(x)|\,dx+\chi r^\beta\Big) \\
&\leq c_1\delta^\beta( I(r)+ L+\chi r^\beta)
\leq c_1\delta^\beta( I(r)+ 2\omega(4\delta r)+\chi r^\beta)
\end{aligned}
\end{equation}
for some $c_1)>0$, depending only on $\G$ 
Now up to choosing $\delta$ sufficiently small, such that 
$2c_1\delta^\beta<1$, by \eqref{llest1} we get
\begin{equation}\label{c22}
\omega(4\delta r)\leq \frac{c_1\delta^\beta}{1-2c_1\delta^\beta}
\big(I(r)+\chi r^\beta\big).
\end{equation}
that prove the claim \eqref{claim-m-s} in Case 2, as well. This completes the proof of Proposition \ref{prop:intosc0}.
\end{proof}

Now, we prove some comparison estimate concerning the solution of Dirichlet problem with freezing of the coefficients on metric balls.

Let us fix $x_0\in\Omega$ and $R>0$, such that $B_{2R}(x_0)\subset\Omega$, and $u\in HW^{1,2}(B_{2R}(x_0))$ a weak solution of \eqref{eq-div-var-coeff}. Let us consider the Dirichlet problem
\begin{equation}\label{eq:frzdir}
 \begin{cases}
  \divg (A(x_0)\Hnabla v)=  0\ \ \text{in}\ B_R(x_0),\vspace{0.3em}\\
 \ \hspace{2em} v-u\in HW^{1,2}_0(B_R(x_0)). 
 \end{cases}
\end{equation}
\begin{lem}\label{lem:frzcomp}
Let $x_0\in\Omega$, and $R>0$ such that $B_{2R}=B_{2R}(x_0) \subset \Omega$. Let $u\in HW^{1,2}(B_{2R})$ be weak solution of \eqref{eq-div-var-coeff} and $v\in HW^{1,2}(B_R)$ a solution of Dirichlet problem \eqref{eq:frzdir}. Then there exists a constant $c=c(\G,\nu,L)>0$ such that 
\begin{equation}\label{eq:frzcomp}
\fint_{B_R}|\Hnabla v(x)-\Hnabla u(x)|^2\, dx\leq cL'R^{\alpha}\fint_{B_R}|\Hnabla u(x)|^2\, dx.
\end{equation}
\end{lem}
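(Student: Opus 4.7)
The strategy is the classical freezing-of-coefficients energy comparison. Since $\varphi := v - u$ belongs to $HW^{1,2}_0(B_R)$, it is an admissible test function for both weak formulations. On the one hand, inserting $\varphi$ into \eqref{eq:frzdir} gives
\begin{equation*}
\int_{B_R} \langle A(x_0)\Hnabla v,\Hnabla \varphi\rangle\, dx = 0.
\end{equation*}
On the other hand, extending $\varphi$ by zero to $B_{2R}$ produces an element of $HW^{1,2}_0(B_{2R})$, which is therefore admissible in the weak formulation of \eqref{eq-div-var-coeff} for $u$ on $B_{2R}$, so
\begin{equation*}
\int_{B_R} \langle A(x)\Hnabla u,\Hnabla \varphi\rangle\, dx = 0.
\end{equation*}

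Subtracting the two identities and decomposing $A(x_0)\Hnabla v = A(x_0)\Hnabla \varphi + A(x_0)\Hnabla u$, one arrives at the key identity
\begin{equation*}
\int_{B_R} \langle A(x_0)\Hnabla \varphi,\Hnabla \varphi\rangle\, dx
= \int_{B_R} \langle (A(x)-A(x_0))\Hnabla u,\Hnabla \varphi\rangle\, dx.
\end{equation*}
I would then apply the lower ellipticity bound in \eqref{st.cond1} on the left-hand side, and the H\"older-continuity assumption \eqref{st.cond2} together with $d_c(x,x_0) < R$ for $x\in B_R(x_0)$ on the right-hand side, to obtain
\begin{equation*}
\nu \int_{B_R} |\Hnabla \varphi|^2\, dx
\le L' R^\alpha \int_{B_R} |\Hnabla u|\,|\Hnabla \varphi|\, dx.
\end{equation*}

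A Cauchy--Schwarz inequality (or, equivalently, Young's inequality with parameter $\nu/2$ and absorption of $|\Hnabla\varphi|^2$ into the left-hand side) now yields
\begin{equation*}
\int_{B_R} |\Hnabla \varphi|^2\, dx
\le C\,\nu^{-2}(L')^2 R^{2\alpha}\int_{B_R} |\Hnabla u|^2\, dx.
\end{equation*}
Dividing by $|B_R|$ delivers \eqref{eq:frzcomp} up to universal constants; the form $c L' R^\alpha$ stated in the lemma is then recovered under the natural smallness condition $L' R^\alpha \le 1$ (equivalently, $R$ sufficiently small relative to $L'$ and $\alpha$), which is consistent with the regime in which the rest of Section \ref{sec:reg} operates.

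I do not anticipate a genuine obstacle: this is the standard Campanato--Giaquinta frozen-coefficients comparison, and the noncommutative Carnot-group structure plays no active role at this step, since the argument reduces to pointwise ellipticity on the horizontal bundle and to the $d_c$-H\"older control of $A$. The only mild technical point is ensuring that the zero extension of $\varphi$ outside $B_R$ lies in $HW^{1,2}_0(B_{2R})$ and is therefore a legitimate test function for the equation satisfied by $u$ on the larger ball, which is immediate from the definition of $HW^{1,2}_0$.
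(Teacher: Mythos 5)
Your proof is correct and follows essentially the same frozen-coefficients comparison as the paper: test both weak formulations with $\varphi=v-u$, subtract to obtain the identity $\int\langle A(x_0)\Hnabla\varphi,\Hnabla\varphi\rangle=\int\langle(A(x)-A(x_0))\Hnabla u,\Hnabla\varphi\rangle$, and then use \eqref{st.cond1}, \eqref{st.cond2}. The only deviation is in how you close the estimate. You absorb $|\Hnabla\varphi|^2$ into the left-hand side via Young's inequality, which produces the (quadratic) factor $c\,(L'R^\alpha)^2$; to arrive at the literal form $c\,L'R^\alpha$ of \eqref{eq:frzcomp} you therefore need the extra smallness hypothesis $L'R^\alpha\leq 1$, which you flag. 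The paper avoids this by not absorbing: it applies Young's inequality to get $c\,L'R^\alpha\int_{B_R}\big(|\Hnabla u|^2+|\Hnabla v|^2\big)\,dx$, and then controls $\int_{B_R}|\Hnabla v|^2\,dx\leq c\int_{B_R}|\Hnabla u|^2\,dx$ using the energy minimality of $v$ together with the ellipticity bounds, which gives the linear-in-$L'R^\alpha$ estimate with no size restriction on $R$. Both routes are valid for the use made of the lemma later (where $L'\bar R^\alpha<1$ is imposed anyway), but if you want the lemma exactly as stated, either switch to the energy-bound step instead of absorbing, or add the observation that when $L'R^\alpha>1$ the inequality is trivial from $\int|\Hnabla v-\Hnabla u|^2\leq 2\int|\Hnabla v|^2+2\int|\Hnabla u|^2\leq c\int|\Hnabla u|^2$.
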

\begin{proof}	
As in the previous proof, for all concentric balls centred in $x_0\in\G$, we will denote $B\varrho=B_\varrho(x_0)$ for every $\varrho>0$. First of all, we noticed that from \eqref{eq:frzdir}, $v$ satisfies the variational problem
\begin{equation}\label{v-g-minim}
    \int_{B_R}\left\langle A(x)\Hnabla v(x),\Hnabla v(x)\right\rangle\,dx=\min_{u-w\in HW^{1,p}_0(B_R)}\int_{B_R}\left\langle A(x)\Hnabla w(x),\Hnabla w(x)\right\rangle\,dx.
\end{equation}
So by minimality of $v$ in \eqref{v-g-minim} and using \eqref{st.cond1}, we obtain
\begin{equation*}
\begin{split}
    \nu \int_{B_R}|\Hnabla v(x)|^2\, dx &\leq \int_{B_R}\left\langle A(x)\Hnabla v(x),\Hnabla v(x)\right\rangle\,dx \\
    &\leq\int_{B_R}\left\langle A(x)\Hnabla u(x),\Hnabla u(x)\right\rangle\,dx \leq L \int_{B_R}|\Hnabla u(x)|^2\,dx
\end{split}    
\end{equation*}
that implies
\begin{equation}\label{eq:enbd}
    \int_{B_R}|\Hnabla v(x)|^2\, dx\leq c \int_{B_R}|\Hnabla u(x)|^2\, dx,
\end{equation}
for some $c>0$ depending only on $\nu$ e $L$.

Then we notice that using $u-v$ as test function in \eqref{eq-div-var-coeff} and \eqref{eq:frzdir}, we get
\begin{equation*}
    \int_{B_{R}} \left\langle A(x)\Hnabla u(x),\Hnabla u(x) -\Hnabla v(x)\right\rangle\,dx=0=\int_{B_R}\left\langle A(x_0)\Hnabla v(x),\Hnabla u(x) -\Hnabla v(x)\right\rangle\, dx 
\end{equation*}    
that leads to 
\begin{align}\label{last-lemma-ms}
\nu\int_{B_R}&|\Hnabla u(x)-\Hnabla v(x)|^2\, dx \\ \nonumber
&\qquad \leq \int_{B_R}\left\langle A(x_0)\Hnabla u(x)-A(x_0)\Hnabla v(x),\Hnabla u(x) -\Hnabla v(x)\right\rangle\, dx\\ \nonumber
&\qquad= \int_{B_R}\left\langle A(x_0)\Hnabla u(x)- A(x)\Hnabla u(x),\Hnabla u(x) -\Hnabla v(x)\right\rangle\, dx\\ \nonumber
&\qquad\leq L'R^\alpha \int_{B_R}|\Hnabla u(x)|
|\Hnabla u(x)-\Hnabla v(x)|\, dx\\ \nonumber
&\qquad\leq c\,L'R^\alpha \int_{B_R}|\Hnabla u(x)|^2+|\Hnabla v(x)|^2\, dx \nonumber
\end{align}
where the first and second inequalities are consequences of \eqref{st.cond1} and \eqref{st.cond2} respectively and in the last inequality we use Young's and triangular inequality. Combining \eqref{eq:enbd} and \eqref{last-lemma-ms} we get the proof of \eqref{eq:frzcomp}. 
\end{proof}

\subsection{Proof of the Theorem \ref{thm:c1au}}
In this section, we present the proof Theorem \ref{thm:c1au}. As above, throughout this subsection, we denote by $u\in HW^{1,p}(\Omega)$ a weak solution of the equation \eqref{eq-div-var-coeff}. Furthermore, fixed $x_0\in \Omega$, for all concentric metric balls centered in $x_0$ we set $B_\rho=B_\rho(x_0)$ for every $\rho>0$.

With respect to the given data, let us set 
\begin{equation}\label{eq:r0}
 \bar R=\bar{R}(Q,\lambda,\Lambda,L',\alpha,\dist(x_0,\partial\Omega))>0,
\end{equation}
which shall be chosen as small as required later.
Let $$\bar R\leq \min\{1, \frac{1}{2}\dist(x_0,\partial\Omega), L'^{-1/\alpha}\}$$ to begin with, so that for any $R<\bar R$, we can suppose that 
\begin{equation}\label{scelta-R}
R<1 \quad L'R^\alpha <1 \quad \text{and}\quad B_R\subset \subset\Omega.
\end{equation}

Before dealing with the proof of the Theorem \ref{thm:c1au}, we preface a series of lemmas that are a consequence of the comparison estimates just seen and of regularity results in \cite{CM22}.
\begin{lem}\label{lem:amlip}	
For any $0<\rho\leq R \leq \bar R/2$, we have the estimate
\begin{equation}\label{eq:amlip}
\int_{B_\rho}|\Hnabla u(x)|\, dx\,\leq\, c\left(\frac{\rho}{R}\right)^{Q}\int_{B_R} |\Hnabla u(x)|\, dx 
\,+ c(L'R^\alpha)^\frac{1}{p}\fint_{B_{2R}}|\Hnabla u(x)|\, dx.
\end{equation}
\end{lem}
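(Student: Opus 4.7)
The plan is a frozen-coefficient comparison argument in the spirit of Duzaar-Mingione \cite{DM} and its Heisenberg adaptation \cite{SS}. First I would dispose of the easy range $R/2<\rho\le R$: there $(\rho/R)^{Q}\ge 2^{-Q}$, so the first term on the right-hand side of \eqref{eq:amlip} already swallows $\int_{B_\rho}|\Hnabla u|\,dx$ and there is nothing to prove. The real case is therefore $\rho\le R/2$.

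For $\rho\le R/2$, let $v\in HW^{1,2}(B_R)$ be the solution of the frozen Dirichlet problem \eqref{eq:frzdir}, so that $v$ solves the constant-coefficient equation \eqref{eq:frzeq}. Splitting by the triangle inequality,
\[
\int_{B_\rho}|\Hnabla u|\,dx \,\le\, \int_{B_\rho}|\Hnabla v|\,dx + \int_{B_R}|\Hnabla u-\Hnabla v|\,dx,
\]
the key observation is that $v$ satisfies the Moser-type sup estimate \eqref{cm-q=1} with $q=1$, yielding $\sup_{B_{R/2}}|\Hnabla v|\le c\fint_{B_R}|\Hnabla v|\,dx$. Multiplying by $|B_\rho|$, using $|B_\rho|/|B_R|=(\rho/R)^Q$, and then substituting $\fint_{B_R}|\Hnabla v|\le \fint_{B_R}|\Hnabla u|+\fint_{B_R}|\Hnabla u-\Hnabla v|$ reduces the problem to controlling the single quantity $\int_{B_R}|\Hnabla u-\Hnabla v|\,dx$ by a multiple of the desired error term.

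To estimate this comparison error, Cauchy-Schwarz combined with the frozen comparison bound \eqref{eq:frzcomp} gives
\[
\int_{B_R}|\Hnabla u-\Hnabla v|\,dx \,\le\, c(L'R^\alpha)^{1/2}|B_R|\Big(\fint_{B_R}|\Hnabla u|^{2}\,dx\Big)^{1/2}.
\]
The main obstacle, I expect, is passing this $L^{2}$-average on $B_R$ back to an $L^{1}$-average on $B_{2R}$, because $u$ itself only satisfies the variable-coefficient equation \eqref{eq-div-var-coeff}, not a frozen one. My fix is to apply \eqref{cm-q=1} a second time with $q=1$ directly to $u$ on the twice-larger ball $B_{2R}$, which is legitimate since the structure condition \eqref{st.cond1} holds pointwise; this yields $\sup_{B_R}|\Hnabla u|\le c\fint_{B_{2R}}|\Hnabla u|\,dx$. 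Combining this with the elementary inequality $\fint_{B_R}|\Hnabla u|^{2}\le \sup_{B_R}|\Hnabla u|\cdot \fint_{B_R}|\Hnabla u|$ and absorbing volume ratios using $|B_{2R}|\sim |B_R|$ collapses the comparison error to a term of the form $c(L'R^\alpha)^{1/p}\fint_{B_{2R}}|\Hnabla u|\,dx$, at which point assembling the two parts yields \eqref{eq:amlip}. The delicate point here is really conceptual rather than computational: one must be prepared to invoke the variable-coefficient sup estimate from \cite{CM22} for $u$ on a ball strictly larger than $B_R$, which is precisely why the statement of the lemma requires $R\le \bar R/2$ and why the right-hand side features $B_{2R}$ rather than $B_R$.
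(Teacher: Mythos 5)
Your setup (frozen Dirichlet comparison, triangle inequality, Moser-type sup bound for $v$, and the reduction to $\int_{B_R}|\Hnabla u-\Hnabla v|\,dx$) matches the paper's strategy. The gap is in the last step, and it is a genuine one. You propose to close the $L^2\!\to L^1$ passage by applying \eqref{cm-q=1} with $q=1$ \emph{to $u$ itself} on $B_{2R}$, asserting $\sup_{B_R}|\Hnabla u|\le c\fint_{B_{2R}}|\Hnabla u|$ on the grounds that the pointwise ellipticity \eqref{st.cond1} holds. But \eqref{cm-q=1} is stated (see the remark after Theorem \ref{thm:c1au}) only for solutions of the \emph{frozen} constant-coefficient equation \eqref{eq:frzeq}. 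Uniform ellipticity alone does not give an $L^\infty$ bound for $\Hnabla u$ when the coefficients vary: the horizontal derivatives $X_iu$ do not themselves solve a nice equation, and obtaining $\Hnabla u\in L^\infty_{loc}$ under the H\"older condition \eqref{st.cond2} is essentially the content of Theorem \ref{thm:c1au}, which is precisely what this lemma is a stepping stone towards. So invoking such a bound for $u$ at this stage is circular.

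What the paper actually does at this point is entirely elementary and avoids any a priori gradient bound on $u$: it invokes a Caccioppoli inequality $\int_{B_R}|\Hnabla u|^2\le c\,R^{-2}\int_{B_{2R}}|u-(u)_{B_{2R}}|^2$ (valid for any uniformly elliptic divergence-form equation, frozen or not), then the Poincar\'e--Sobolev inequality to pass to $\|\Hnabla u\|_{L^q(B_{2R})}$ with $q=\tfrac{2Q}{Q+2}<2$, and finally the self-improving property of reverse H\"older inequalities (Lemma 1.4 in \cite{BKL}, with $r=1$, $q=\tfrac{2Q}{Q+2}$, $p=2$) to descend all the way to the $L^1$ average $\fint_{B_{2R}}|\Hnabla u|$. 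This chain produces \eqref{reverseHOLD} without ever invoking pointwise bounds on $\Hnabla u$; that is the missing ingredient in your argument. Your preliminary case split $R/2<\rho\le R$ versus $\rho\le R/2$ is a harmless (and slightly more careful) preliminary that the paper leaves implicit.
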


\begin{proof}
Let $v$ be the solution of \eqref{eq:frzdir}. By choosing $q=1$ in \eqref{cm-q=1}, it follows 
\begin{equation}\label{o0}
   \int_{B_\rho}|\Hnabla v(x)|\, dx \leq c \left(\frac{\varrho}{R}\right)^Q\int_{B_\rho}|\Hnabla v(x)|\, dx.
\end{equation}
for some constant $c>0$ depending only on $Q$.
Then we write
\begin{equation}\label{o1}
\int_{B_\rho}|\Hnabla u(x)|\, dx \leq \int_{B_\rho}|\Hnabla v|\, dx + \int_{B_\rho}|\Hnabla u(x)-\Hnabla v(x)|\, dx. 
\end{equation}
Now using \eqref{o0} and triangular inequality, we estimate the first term \eqref{o1} as
\begin{equation}\label{o2}	
\begin{aligned}
\int_{B_\rho}|\Hnabla v(x)|\, dx &\leq  c\left(\frac{\rho}{R}\right)^Q\int_{B_R}|\Hnabla v(x)|\, dx \\
&\leq c\left(\frac{\rho}{R}\right)^Q\int_{B_R}|\Hnabla u(x)|\, dx +c\left(\frac{\rho}{R}\right)^Q\int_{B_R}|\Hnabla v(x)-\Hnabla u(x)|\, dx.
\end{aligned}
\end{equation}

Then, we notice that H\"older inequality and \eqref{eq:frzcomp} imply 
\begin{equation}\label{eq:frzcomp'}
\fint_{B_R}|\Hnabla v(x)-\Hnabla u(x)|\, dx\leq c(L'R^\alpha)^\frac{1}{2}\Big(\fint_{B_R}|\Hnabla u(x)|^2\, dx\Big)^\frac{1}{2}.
\end{equation}
Now by Caccioppoli-type estimates proved in \cite{BKL}*{Section2} (see also equation (3.26) in \cite{Lu96}) and \eqref{scelta-R}, we have
\begin{equation}\label{Cac-est}
    \int_{B_R}|\Hnabla u(x)|^2\,dx\leq c_0R^2 \int_{B_{2R}}\left | \,u(x)-(u)_{B_{2R}} \right |^2 \,dx \leq c_0 \int_{B_{2R}}\left | \,u(x)-(u)_{B_{2R}} \right |^2 \,dx    \end{equation}
for some constant $c_0>0$. Then by Poincar\'e-Sobolev inequality (see e.g. \cites{FLW95,Lu94}) we have 
\begin{equation}\label{PS-2ast}
    \left(\int_{B_{2R}}\left | \,u(x)-(u)_{B_{2R}} \right |^2\,dx\right)^{\frac 1 2} \leq C \left(\int_{B_{2R}}|\Hnabla u(x)|^q\,dx\right)^{\frac{1}{q}}
\end{equation}
for $q:=\frac{2Q}{Q+2}<2$ and some universal constant $C>0$. Thus combing \eqref{Cac-est} and \eqref{PS-2ast}, up to remaining constant, we obtain 
\begin{equation}
    \left(\int_{B_R}|\Hnabla u(x)|^2\,dx\right)^{\frac 1 2 }\leq C \left(\int_{B_{2R}}|\Hnabla u(x)|^q\,dx\right)^{\frac 1 q }\quad \text{with  }q=\frac{2Q}{Q+2}.
\end{equation}
Recalling that $Q\geq2$, we are in position to apply Lemma 1.4 in \cite{BKL} with $r=1$, $q=\frac{2Q}{Q+2}$ and $p=2$ (so that $0<r<q<p$) to conclude that 
\begin{equation*}
    \left(\int_{B_R}|\Hnabla u(x)|^2\,dx\right)^{\frac 1 2 }\leq C' \int_{B_{2R}}|\Hnabla u(x)|\,dx
\end{equation*}
for some constant $C'>0$. So by \eqref{eq:frzcomp'}, we conclude that 
\begin{equation}\label{reverseHOLD}
    \fint_{B_R}|\Hnabla v(x)-\Hnabla u(x)|\, dx\leq c(L'R^\alpha)^\frac{1}{2}\fint_{B_{2R}}|\Hnabla u(x)|\, dx,
\end{equation}
fro some constant $c>0$ depending only on $Q$.

Finally, putting together \eqref{o1} and \eqref{o2} and using \eqref{reverseHOLD}, we conclude
\begin{equation}
\begin{split}
    \int_{B_\rho}|\Hnabla u(x)|\, dx &\leq c\left(\frac{\rho}{R}\right)^Q\int_{B_R}|\Hnabla u(x)|\, dx +c\left(\frac{\rho}{R}\right)^Q\int_{B_R}|\Hnabla v(x)-\Hnabla u(x)|\, dx\\ & \qquad\qquad\qquad\qquad\qquad\qquad\qquad\qquad\qquad\qquad\quad+\int_{B_\rho}|\Hnabla u(x)-\Hnabla v(x)|\, dx \\
    &\leq c\left(\frac{\rho}{R}\right)^Q\int_{B_R}|\Hnabla u(x)|\, dx + c\int_{B_R}|\Hnabla u(x)-\Hnabla v(x)|\, dx \\
    &\leq c\left(\frac{\rho}{R}\right)^Q\int_{B_R}|\Hnabla u(x)|\, dx +c(L'R^\alpha)^\frac{1}{2}\fint_{B_2R}|\Hnabla u(x)|\, dx,
\end{split}    
\end{equation}
fro some renaming constant $c>0$, depending only on $Q$. This concludes the proof of \eqref{eq:amlip}.
\end{proof}

Now using Lemma \ref{lem:amlip} and a standard perturbation lemma (see Lemma 2.1. in \cite{giaB}*{Chapter III}) we obtain the following regularity estimate.
\begin{prop}\label{prop:Aml}
There exists $c=c(Q,\nu,L)>0$ such that,
\begin{equation}\label{eq:Aml}
\int_{B_r}|\Hnabla u(x)|\, dx\leq c \left(\frac{r}{R}\right)^{Q-\bar\varepsilon}\int_{B_R}|\Hnabla u(x)|\, dx 
\end{equation}
holds for any $0<\bar\varepsilon <Q$ and $0<r\leq R\leq \bar R$.
\end{prop}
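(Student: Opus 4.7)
\emph{Proof proposal.} The estimate \eqref{eq:Aml} is a classical Campanato-type decay, and I would derive it from Lemma \ref{lem:amlip} via the standard iteration lemma of Giaquinta (\cite{giaB}*{Chapter III, Lemma 2.1}). The key is the nondecreasing auxiliary function $\psi(r):=\int_{B_r}|\Hnabla u(x)|\,dx$.

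First, I would recast the conclusion of Lemma \ref{lem:amlip} in the standard perturbative form
\[
\psi(\rho)\leq A\Bigl[\Bigl(\tfrac{\rho}{R}\Bigr)^{Q}+\eta(R)\Bigr]\psi(R),\qquad 0<\rho\leq R\leq \bar R,
\]
with $A=A(Q,\nu,L)>0$ and $\eta(R)=c(L'R^{\alpha})^{1/2}$. This only requires elementary manipulations: rewriting the average $\fint_{B_{2R}}|\Hnabla u|$ in terms of $\psi(2R)$, rescaling $R\mapsto R/2$ to eliminate the $2R$ in the perturbation term, using the monotonicity $\psi(R/2)\leq\psi(R)$, and then extending to the trivial range $R/2<\rho\leq R$ at the cost of an enlarged multiplicative constant.

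Next, I would invoke Giaquinta's iteration lemma in the form: for any nonnegative nondecreasing $\phi$ satisfying
$\phi(\rho)\leq A\bigl[(\rho/R)^{Q}+\eta\bigr]\phi(R)$
on $0<\rho\leq R\leq R_{0}$, if $\eta$ lies below a universal threshold $\eta_{0}=\eta_{0}(A,Q,\beta)$ with $\beta<Q$, then one obtains $\phi(\rho)\leq c(\rho/R)^{\beta}\phi(R)$ with $c=c(A,Q,\beta)$. Choosing $\beta:=Q-\bar\varepsilon$ and further shrinking $\bar R$ (now also depending on $L',\alpha,\bar\varepsilon$ through $\eta_{0}$) so that the monotone quantity $\eta(R)\leq \eta(\bar R)\leq \eta_{0}$ for every $R\leq\bar R$, the hypothesis is met uniformly on $(0,\bar R]$ and the conclusion delivers exactly \eqref{eq:Aml}.

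The main technical point---not deep, but requiring care---is that the ``small parameter'' $\eta(R)$ is $R$-dependent, whereas Giaquinta's lemma is stated for a fixed $\eta$. Monotonicity of $\eta(\cdot)$ resolves this: by tightening the smallness requirement on $\bar R$ already imposed in \eqref{eq:r0}, we uniformly absorb $\eta(R)$ into the universal threshold $\eta_{0}$, after which the iteration runs in its classical form and yields the stated Campanato decay with constant depending only on $Q$, $\nu$, $L$ (and $\bar\varepsilon$ through $\eta_{0}$).
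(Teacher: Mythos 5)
Your proposal matches the paper's proof: both start from Lemma \ref{lem:amlip}, recast it (after the appropriate rescaling that replaces the $B_{2R}$ average with $\phi$ evaluated at the same radius and uses monotonicity of $\phi$) in the standard perturbative form $\phi(\rho)\leq c\bigl[(\rho/r)^{Q}+(L'r^{\alpha})^{1/2}\bigr]\phi(r)$, and then invoke Giaquinta's iteration lemma \cite{giaB}*{Chapter III, Lemma 2.1} with exponent $\beta=Q-\bar\varepsilon$ after shrinking $\bar R$ so that the perturbation falls below the lemma's threshold. You are in fact slightly more careful than the paper on one point: since the Giaquinta threshold $\eta_0$ depends on the target exponent $\beta=Q-\bar\varepsilon$, the required smallness of $\bar R$ also depends on $\bar\varepsilon$, a dependence the paper's notation $\epsilon_0(Q,\nu,L)$ elides.
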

\begin{proof}
At first, let us fix $0<r\leq \bar R$ and denote 
$$ \phi(r):= \int_{B_r}|\Hnabla u(x)|\, dx.$$ 
By \eqref{eq:amlip} with the appropriate rescaling, we have
\begin{equation}\label{scelta-esp0}
\phi(\rho)\leq c\left(\frac{\rho}{r}\right)^{Q}\phi(r)+ c(L'r^\alpha)^\frac{2}{p}\phi(r),
\end{equation}
for any $\rho\leq r$ and $c=c(Q,\nu,L)>0$. So we can apply Lemma 2.1. in \cite{giaB}*{Chapter III} (see also \cite{SS}*{Lemma 4.2} and \cite{camp}*{Lemma 6.I-II)} with $\alpha=Q$, $B=0$ and $\beta=Q-\bar\varepsilon$ for some $0<\bar\varepsilon<Q$ with the appropriate reduction 
$$ (L'{\bar R}^\alpha)^\frac{1}{2}\leq\epsilon_0(Q,\nu,L),$$ to conclude that  
$$ \phi(r)  \leq c\left(\frac{r}{R}\right)^{Q-\bar\varepsilon}\phi(R), $$
for every $0<r\leq R\leq \bar R$, that completes the proof of \eqref{eq:Aml}. 
\end{proof}

Now, using estimate \eqref{eq:Aml2} and integral oscillation decay estimate for $v$ as in \eqref{eq:frzdir}, we prove $C^{1,\gamma}$ regularity of $u$. First, we have the following lemma.
 \begin{lem}\label{lem:intosc}
 There exist $\beta=\beta(Q,\nu,L)\in(0,1)$ and $c=c(Q,\nu,L)>0$ such that, for every $0<\varrho < r/4 < \bar R/2$, the following estimate holds:
\begin{equation}\label{eqlem3.4}
\fint_{B_\varrho}|\Hnabla u(x) - (\Hnabla u)_{B_\varrho}|\, dx \,\leq\, c\Big(\frac{\varrho}{r}\Big)^\beta  \fint_{B_{4r}}|\Hnabla u(x)|\, dx+c\,\Big(\frac{r}{\varrho}\Big)^Q (L'r^\alpha)^\frac{1}{2}\fint_{B_{4r}}|\Hnabla u(x)|\, dx.
\end{equation}
\end{lem}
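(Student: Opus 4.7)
The plan is to compare $u$ with the frozen-coefficient replacement $v$ on a slightly larger ball, use the oscillation decay of Proposition \ref{prop:intosc0} for $v$, and absorb the mismatch with the comparison estimate from Lemma \ref{lem:frzcomp}. Concretely, fix $0 < \varrho < r/4 < \bar R/2$ and let $v \in HW^{1,2}(B_{2r})$ solve the Dirichlet problem
\[
\divg(A(x_0)\Hnabla v) = 0 \ \text{in } B_{2r}, \qquad v - u \in HW^{1,2}_0(B_{2r}),
\]
which is admissible since $2r < \bar R$. Then I would start from the elementary inequality $\fint_{B_\varrho}|f - (f)_{B_\varrho}|\,dx \leq 2\fint_{B_\varrho}|f - c|\,dx$ with $c = (\Hnabla v)_{B_\varrho}$ to split
\[
\fint_{B_\varrho}|\Hnabla u - (\Hnabla u)_{B_\varrho}|\,dx \,\leq\, 2\underbrace{\fint_{B_\varrho}|\Hnabla v - (\Hnabla v)_{B_\varrho}|\,dx}_{=:A} + 2\underbrace{\fint_{B_\varrho}|\Hnabla u - \Hnabla v|\,dx}_{=:B}.
\]

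For the term $A$, I would apply Proposition \ref{prop:intosc0} to $v$ on $B_r$ (which is admissible since $v$ is $C^{1,\sigma}$ inside $B_{2r}$ by \cite{CM22}), choosing the intermediate scale, say $r/2$, so that with $\chi = \sup_{B_r}|\Hnabla v|/r^\beta$,
\[
A \,\leq\, c\left(\frac{\varrho}{r}\right)^\beta\!\left(\fint_{B_{r/2}}|\Hnabla v - (\Hnabla v)_{B_{r/2}}|\,dx + \sup_{B_r}|\Hnabla v|\right).
\]
To bound $\sup_{B_r}|\Hnabla v|$, I would apply \eqref{cm-q=1} with $q = 1$ to get $\sup_{B_r}|\Hnabla v| \leq c\fint_{B_{2r}}|\Hnabla v|\,dx$, and then use the triangle inequality together with \eqref{reverseHOLD} (applied at scale $R = 2r$) and $L'r^\alpha \leq 1$ to conclude
\[
\fint_{B_{2r}}|\Hnabla v|\,dx \,\leq\, \fint_{B_{2r}}|\Hnabla u|\,dx + c(L'r^\alpha)^{1/2}\fint_{B_{4r}}|\Hnabla u|\,dx \,\leq\, c\fint_{B_{4r}}|\Hnabla u|\,dx.
\]
Bounding $\fint_{B_{r/2}}|\Hnabla v - (\Hnabla v)_{B_{r/2}}|\,dx \leq 2\sup_{B_r}|\Hnabla v|$ as well, this yields $A \leq c(\varrho/r)^\beta \fint_{B_{4r}}|\Hnabla u|\,dx$.

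For the term $B$, the estimate is more direct: enlarging the ball and inserting \eqref{reverseHOLD} at scale $R = 2r$,
\[
B \,\leq\, \frac{|B_{2r}|}{|B_\varrho|}\fint_{B_{2r}}|\Hnabla u - \Hnabla v|\,dx \,\leq\, c\left(\frac{r}{\varrho}\right)^Q (L'r^\alpha)^{1/2}\fint_{B_{4r}}|\Hnabla u|\,dx.
\]
Adding the bounds on $A$ and $B$ gives exactly \eqref{eqlem3.4}. The main obstacle, which is technical rather than conceptual, is ensuring that the scales line up cleanly: the oscillation decay for $v$ only lives on the ball where $v$ is defined, the $L^\infty$-estimate \eqref{cm-q=1} requires a buffer, and the comparison estimate \eqref{reverseHOLD} forces us to enlarge averages from $B_R$ to $B_{2R}$; solving the auxiliary Dirichlet problem on $B_{2r}$ (rather than on $B_r$) is the choice that makes everything fit inside $B_{4r}$ as required.
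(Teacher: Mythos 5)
Your proof is correct and follows the same strategy as the paper: freeze the coefficients and solve the Dirichlet problem, split the excess average of $\Hnabla u$ into the $\Hnabla v$-oscillation plus the comparison term $\Hnabla u-\Hnabla v$, decay the former by Proposition \ref{prop:intosc0}, and control the latter (and the constant $\chi$) by the comparison estimate \eqref{reverseHOLD}. In fact the paper's proof of this lemma is truncated at exactly the point where the two pieces are recombined, and your write-up supplies the missing conclusion cleanly; your explicit choice of $B_{2r}$ as the domain of the auxiliary Dirichlet problem, together with the intermediate scale $r/2$ and $r_0=r$ in Proposition \ref{prop:intosc0}, is a tidier bookkeeping than what the paper writes. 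One small slip: the stated hypothesis $\varrho<r/4<\bar R/2$ gives only $r<2\bar R$, not $2r<\bar R$ as you assert; to justify both the Dirichlet problem on $B_{2r}$ and the application of Lemma \ref{lem:frzcomp} at scale $R=2r$ (which needs $B_{4r}\subset\Omega$) one really needs something like $r<\bar R/2$, so either the lemma's range or $\bar R$ should be shrunk accordingly — but this is a defect of the paper's statement rather than of your argument.
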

\begin{proof}
To seek simplicity, in the following, we will denote all constants as $c$ but the values of which may vary from line to line and, unless explicitly specified otherwise, they are positive and depending only on $Q$, $L$ and $\nu$. Let $v$ the $\G$-harmonic replacement of $u$ in $B_R$. Since it results 
\begin{equation*}
    |(\Hnabla u)_{B_\varrho}-(\Hnabla v)_{B_\varrho}|=\left|\fint_{B_\varrho}\Hnabla u(x) - (\Hnabla v)_{B_\varrho}\, dx\right|\leq \fint_{B_\varrho}|\Hnabla u(x) - (\Hnabla v)_{B_\varrho}|\, dx,
\end{equation*}
using triangular inequality, we obtain
\begin{equation}\label{eq:io1}
\begin{aligned}
\fint_{B_\varrho}|\Hnabla u(x) - (\Hnabla u)_{B_\varrho}|\, dx &\leq 
2\fint_{B_\varrho}|\Hnabla u(x) - (\Hnabla v)_{B_\varrho}|\, dx \\
&\leq 2\fint_{B_\varrho}|\Hnabla v(x) - (\Hnabla v)_{B_\varrho}|\, dx 
+2\fint_{B_\varrho}|\Hnabla u(x) - \Hnabla v(x)|\, dx . 
\end{aligned}
\end{equation}
Now, we shall estimate both terms of the right-hand side of \eqref{eq:io1} 
separately. 

At first, we notice that 
\begin{equation*}
    \fint_{B_\varrho}|\Hnabla v(x) - (\Hnabla v)_{B_\varrho}|\, dx \leq \fint_{B_\varrho}|\Hnabla v(x)|\,dx +|(\Hnabla v)_{B_\varrho}|\leq 2 \fint_{B_\varrho}|\Hnabla v(x)|\,dx
\end{equation*}
as a consequence, applying estimate \eqref{int-osc-decay-freez-eq} (note that $v$ solution of \eqref{eq:frzdir} by Theorem 1.3 of \cite{CM22} is of class $C^{1,\beta}$ for some $\beta\in (0,1]$) and choosing $q=1$ in \eqref{cm-q=1} we obtain 
\begin{equation}\label{media-v-pass}
    \begin{split}
        \fint_{B_\varrho}|\Hnabla v(x) - (\Hnabla v)_{B_\varrho}|\, dx &\leq c\left(\frac{\rho}{r}\right)^\beta\left(\int_{B_r}|\Hnabla v(x) - (\Hnabla v)_{B_r}|\,dx+c\sup_{B_{2r}}|\Hnabla v|\left(\frac{1}{2}\right)^\beta\,\right)\\
    &\leq c\left(\frac{\rho}{r}\right)^\beta\left(\int_{B_r}|\Hnabla v(x)|\, dx + c\sup_{B_{2r}}|\Hnabla v|\right) \\
    &\leq c\left(\frac{\rho}{r}\right)^\beta\int_{B_{4r}}|\Hnabla v(x)|\, dx.
    \end{split}
\end{equation}

Using \eqref{media-v-pass}, we estimate 
the first term of \eqref{eq:io1} as 
\begin{equation*}
\begin{aligned}
\fint_{B_\varrho}|\Hnabla v(x) - (\Hnabla v)_{B_\varrho}|\, dx &\leq c\Big(\frac{\varrho}{r}\Big)^\beta 
\fint_{B_{4r}}|\Hnabla v(x)|\, dx \\
&\leq  c\Big(\frac{\varrho}{r}\Big)^\beta \fint_{B_{4r}}|\Hnabla u(x)|\, dx  
+  c\Big(\frac{\varrho}{r}\Big)^\beta \fint_{B_{4r}}|\Hnabla v(x)-\Hnabla u(x)|\, dx 
\end{aligned}
\end{equation*}
 
The second term of \eqref{eq:io1} is estimated simply as 
\begin{equation*} 
\fint_{B_\varrho}|\Hnabla u(x) - \Hnabla v(x)|\, dx \leq 
c\Big(\frac{r}{\varrho}\Big)^Q\fint_{B_{2r}}|\Hnabla u(x) - \Hnabla v(x)|\, dx,
\end{equation*}
that together \eqref{eq:io1} and  \eqref{reverseHOLD} implies 
\end{proof}

Now we are ready to prove Theorem \ref{thm:c1au}.
\begin{proof}[Proof of Theorem \ref{thm:c1au}] Let us consider $0<\varrho<r<R/4<\bar R/4$. From Lemma \ref{lem:intosc}, we get
\begin{equation}\label{eq:hold1}
\begin{aligned}
\int_{B_\varrho}|\Hnabla u(x) - (\Hnabla u)_{B_\varrho}|\, dx \,\leq\, c\Big(\frac{\varrho}{r}\Big)^{Q+\beta} & \int_{B_{4r}}|\Hnabla u(x)|\, dx
 +c(L'r^\alpha)^\frac{1}{2}\int_{B_{4r}}|\Hnabla u(x)|\, dx
\end{aligned}
\end{equation}
and from \eqref{eq:Aml} of Proposition \ref{prop:Aml}, we have, 
\begin{equation}\label{eq:Aml2}
\int_{B_{4r}}|\Hnabla u(x)|\, dx\leq c\left(\frac{r}{R}\right)^{Q-\bar\varepsilon}\int_{B_R}|\Hnabla u(x)|\, dx 
\end{equation}
Now, combing \eqref{eq:Aml2} and \eqref{eq:hold1}, we obtain 
\begin{equation}\label{hl1}
\int_{B_\varrho}|\Hnabla u(x) - (\Hnabla u)_{B_\varrho}|\, dx \,\leq\, c 
\Big(\frac{\varrho^{Q+\beta}R^{\bar\varepsilon}}{r^{\beta+\bar\varepsilon} R^{Q}}\Big)
\int_{B_R}|\Hnabla u(x)|\, dx+ c\,  (L'r^\alpha)^\frac{1}{2}\left(\frac{r}{R}\right)^{Q-\bar\varepsilon}\int_{B_{R}}|\Hnabla u(x)|\, dx.
\end{equation}
 
Moreover, choosing
$$\bar\varepsilon<\alpha/2, \quad \delta<\alpha/2-\bar\varepsilon \quad \text{ and }\quad L' \bar R^{\alpha-(\delta +\bar\varepsilon)}< 1$$ we obtain the estimate
\begin{equation}\label{stima-delta}
\int_{B_\varrho}|\Hnabla u(x) - (\Hnabla u)_{B_\varrho}|\, dx \leq c 
\Big(\frac{\varrho^{Q+\beta}}{r^{\beta+\bar\varepsilon}} + r^{Q+\delta}\Big)\fint_{B_R}|\Hnabla u(x)|\, dx. 
\end{equation}
Now, since $0<\varrho<r<1$, for some $\kappa\in(0,1)$, we can choose $r= \varrho^\kappa$, in \eqref{stima-delta} to obtain
\begin{equation*}
\begin{aligned}
\int_{B_\varrho}|\Hnabla u(x) - (\Hnabla u)_{B_\varrho}|\, dx &\leq c \big(\varrho^{Q+(1-\kappa)\beta-\kappa\bar\varepsilon} + \varrho^{\kappa(Q+\delta)}\big)
\fint_{B_R}|\Hnabla u(x)|\, dx \\
&\leq c\varrho^{Q+\lambda} \fint_{B_R}|\Hnabla u(x)|\, dx,
\end{aligned}
\end{equation*}
where the latter inequality follows when $Q+\lambda \leq \min\{Q+(1-\kappa)\beta-\kappa\bar\varepsilon\,,\, \kappa(Q+\lambda)\}$; 
indeed we can make sure that this is true with the choice 
of $\kappa = \kappa(\lambda)$ such that 
\begin{equation}\label{intervallo-kappa}
    \frac{Q+\lambda}{Q+\delta} \,\leq\, \kappa \,\leq\, \frac{\beta-\lambda}{\beta+\bar\varepsilon},
\end{equation}   
for any $0<\lambda< \beta\delta/(Q+\beta+\delta +\bar\varepsilon)$. Moreover, as soon as $\lambda, \bar\varepsilon$ are small enough, $\kappa=\kappa(\lambda)$ can be chosen close enough to $1$ to make sure that $\varrho^\kappa\leq R$, whenever $0<\varrho<R$. 
Thus, we have obtained 
\begin{equation}\label{final-step-campanato}  
\fint_{B_\varrho}|\Hnabla u - (\Hnabla u)_{B_\varrho}|\, dx \leq c\varrho^\lambda
\fint_{B_R}|\Hnabla u(x)|\, dx,
\end{equation} 
for any $0<\varrho< R\leq \bar R$. By the arbitrariness of of $0<\varrho< R$, it follows that $\Hnabla u\in \mathcal{E}^{\gamma,1}(B_R,H\G)$, with $\gamma=\lambda/Q$. The proof follows by applying Theorem \ref{theo-isomorph-Campanato-spaces-Holder-spaces}.
\end{proof}

\begin{remark}\label{stima-l-inf-rmk}
In the assumption of Theorem \ref{thm:c1au}, fixed $x_0\in\Omega$ and $R>0$ such that $2R<\bar R$, by \eqref{eq:teo-nuovo} and triangular inequality, for any $x,y\in B_R(x_0)$, it exists a constant $C>0$, depending only on $\G$, such that 
\begin{equation}\label{stima-l-inf}
\begin{split}
    |\Hnabla u(x)|-|\Hnabla u(y)|&\leq C\max_{1\leq i\leq m_1}|X_i(x)-X_i(y)|\\ &\leq  c_0\,d_c(x,y)^\gamma \fint_{B_R(x_0)}|\Hnabla u(x)|\, dx
    \leq c_0\,R^\gamma\fint_{B_R(x_0)}|\Hnabla u(x)|\,dx
    \end{split}
\end{equation}
up to renaming constant $c_0=c_0(\G,\nu,L)>0$. This implies for any $x\in B_R(x_0)$,
\begin{equation}\label{stima-l-inf-bis}
    |\Hnabla u(x)|\leq c_0\,R^\gamma\fint_{B_R(x_0)}|\Hnabla u(x)|\,dx+\fint_{B_R(x_0)}|\Hnabla u(z)|\, dz\leq c_1\fint_{B_{2R}(x_0)}|\Hnabla u(x)|\,dx
\end{equation}
up to renaming constant $c_1=c_1(\G,Q,\nu,L,R,\gamma)>0$. Finally, by \eqref{stima-l-inf-bis} and Jensen's inequality, for every $p\in[1,\infty)$, we obtain
\begin{equation}\label{stima-l-inf-tris}
    \sup_{B_R(x_0)}|\Hnabla u(x)|^p\leq c_2\fint_{B_{2R}(x_0)}|\Hnabla u(x)|^p\,dx\end{equation}
for some constant $c_2=c_2(\G,Q,\nu,L,R,\gamma,p)>0$.
\end{remark}

\section{Improvement of dichotomy}\label{sec:impr}
In this section, based on the regularity results presented in Section \ref{sec:reg}, we want to show that the alternative in \eqref{alt2} can be {\em improved} when $\e$ and $\sigma$ are sufficiently
small. 

Fist, we have the following Lemma that we will use later.
\begin{lem}\label{lemma:unifell}
		Let $\bq:\G\to H\G$ a constant horizontal section and let $\eta:\G\to H\G$ an horizontal section be such that $|\eta(x)|<\frac{|\bq|}2$, for any $x\in\G$.
		Let $F:\R^{m_1}\to \R^{m_1}$ the map defined by $F(z):=|z|^{p-2}z$. Let us consider
            \[ A(x):=\int_0^1D_{z}F\big(\bq(x)+t\eta(x) \big)\,dt \]       
            where, for $z\in H\G_x$, the Euclidean vector fields on $H\G_x\cong \R^{m_1}$ are denoted as $D_{z_j}$ for $j=1,\ldots,m_1$ and $D_z=(D_{z_1}\ldots,D_{z_{m_1}})$ is the Euclidean gradient. 
        Then,
		$$ \lambda\,|q|^{p-2} |\xi|^2 \le \left\langle A(x)\xi, \xi \right\rangle \le\Lambda\, |q|^{p-2} |\xi|^2,$$
	for any $x\in\G$ and $\xi\in H\G_x$, and some $\Lambda\ge\lambda>0$, depending on $p$.
\end{lem}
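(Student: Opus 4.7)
The plan is to reduce the lemma to a pointwise estimate on the Jacobian of $F$ combined with a size estimate on $\bq(x)+t\eta(x)$ uniform in $t\in[0,1]$.

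First I would compute $D_zF$ explicitly. Since $F(z)=|z|^{p-2}z$, a direct differentiation gives, for $z\neq 0$,
\[D_zF(z)=|z|^{p-2}\,I + (p-2)|z|^{p-4}\,z\otimes z,\]
and hence for every $\xi$,
\[\scal{D_zF(z)\xi}{\xi}=|z|^{p-2}|\xi|^2+(p-2)|z|^{p-4}\scal{z}{\xi}^2.\]
By Cauchy--Schwarz, $0\leq \scal{z}{\xi}^2\leq |z|^2|\xi|^2$. Splitting into the cases $p\geq 2$ (second summand nonnegative, bounded above by $(p-2)|z|^{p-2}|\xi|^2$) and $1<p<2$ (second summand non-positive, bounded below by $(p-2)|z|^{p-2}|\xi|^2$ since $p-2<0$), I obtain in both cases
\[\min(1,p-1)\,|z|^{p-2}|\xi|^2 \;\leq\; \scal{D_zF(z)\xi}{\xi} \;\leq\; \max(1,p-1)\,|z|^{p-2}|\xi|^2.\]

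Next, I would use the hypothesis $|\eta(x)|<|\bq|/2$. The triangle inequality gives, for every $t\in[0,1]$ and $x\in\G$,
\[\tfrac{1}{2}|\bq|\;\leq\;|\bq|-t|\eta(x)|\;\leq\;|\bq(x)+t\eta(x)|\;\leq\;|\bq|+t|\eta(x)|\;\leq\;\tfrac{3}{2}|\bq|.\]
Raising to the power $p-2$ (with a short case distinction on its sign) yields
\[c(p)\,|q|^{p-2}\;\leq\;|\bq(x)+t\eta(x)|^{p-2}\;\leq\;C(p)\,|q|^{p-2},\]
with $c(p),C(p)>0$ depending only on $p$.

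Combining the two steps gives, for every $t\in[0,1]$, $x\in\G$ and $\xi\in H\G_x$,
\[\lambda\,|q|^{p-2}|\xi|^2 \;\leq\; \scal{D_zF(\bq(x)+t\eta(x))\xi}{\xi} \;\leq\; \Lambda\,|q|^{p-2}|\xi|^2,\]
with $\lambda=\min(1,p-1)\,c(p)$ and $\Lambda=\max(1,p-1)\,C(p)$. Since these bounds are uniform in $t\in[0,1]$, integrating the quadratic form $\scal{D_zF(\bq(x)+t\eta(x))\xi}{\xi}$ in $t$ over $[0,1]$ preserves them and produces the claimed ellipticity for $A(x)$. No genuine obstacle is expected here; the only mild care is the sign of $p-2$, which flips the direction of the comparisons in the two steps but leaves the combined estimate intact.
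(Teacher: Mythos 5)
Your proposal is correct and takes essentially the same route as the paper: compute the Jacobian $D_zF(z)=|z|^{p-2}\,I+(p-2)|z|^{p-4}\,z\otimes z$, bound the resulting quadratic form between $\min(1,p-1)$ and $\max(1,p-1)$ times $|z|^{p-2}|\xi|^2$ via Cauchy--Schwarz with a case split on the sign of $p-2$, use the triangle inequality to trap $|\bq(x)+t\eta(x)|$ in $[\tfrac12|\bq|,\tfrac32|\bq|]$, and integrate in $t$. The paper writes the constants as $1\pm(p-2)^{\pm}$ rather than $\min/\max(1,p-1)$, but these are the same numbers.
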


	\begin{proof}
 First of all, we notice that, for all $t\in(0,1)$ and $x\in\G$, by triangular inequality, we have
	\begin{equation}\begin{split}\label{triang_q-teta}
				&|\bq(x)+t\eta(x)|\le |\bq|+|\eta(x)|< |\bq|+\frac{|\bq|}2=\frac{3|\bq|}2\\
				{\mbox{and }}\quad& |\bq(x)+t\eta(x)|\ge |\bq|-|\eta(x)|>|\bq|-\frac{|\bq|}2=\frac{|\bq|}2.
		\end{split}\end{equation}
		
		Furthermore, for $z\in H\G_x\cong \R^{m_1}$, it results
		$$
		D_zF(z)=(p-2)|z|^{p-4}z\otimes z +|z|^{p-2}{\text{Id}},
		$$
		where ${\text{Id}}$ denotes the identity map on $H\G_x\cong\R^{m_1}$. As a consequence, for all $\xi\in H\G_x$,
		\begin{eqnarray*}
			\left\langle D_zF(z)\xi,\xi\right\rangle&=&(p-2)|z|^{p-4}(\langle z,\xi\rangle)^2+|z|^{p-2}|\xi|^2\\&\ge&
			-(2-p)^+|z|^{p-2}|\xi|^2+|z|^{p-2}|\xi|^2\\&=&\big(1-(2-p)^+\big)\,|z|^{p-2}|\xi|^2,
            \end{eqnarray*}
		that by integration leads to
		\begin{eqnarray*}
			&& \langle A(x)\xi,\xi\rangle \ge \big(1-(2-p)^+\big) |\xi|^2
			\int_0^1|\bq(x)+t\eta(x)|^{p-2} \,dt.
		\end{eqnarray*}
		Then, using \eqref{triang_q-teta} we obtain
		$$ \left\langle A(x)\xi,\xi\right\rangle\ge \frac{1-(2-p)^+}{2^{p-2}}  \,|\bq|^{p-2}|\xi|^2
		$$
		if $p\ge2$, and
		$$ \left\langle A(x)\xi,\xi\right\rangle\ge \big(1-(2-p)^+\big)\left(\frac32\right)^{p-2}|\bq|^{p-2}|\xi|^2
		$$
		if $p\in(1,2)$, for any $\xi\in H\G_x$.
		
		Similarly, for any $z,\xi\in H\G_x$, it results
		\begin{eqnarray*}
			\left\langle D_zF(z)\xi,\xi\right\rangle &=& 
            (p-2)|z|^{p-4}(\langle z,\xi\rangle)^2+|z|^{p-2}|\xi|^2\\   &\le&
			(p-2)^+|z|^{p-2}|\xi|^2+|z|^{p-2}|\xi|^2\\&=&\big(1+(p-2)^+\big)\,|z|^{p-2}|\xi|^2,
		\end{eqnarray*}
		that by integration leads to
		\begin{eqnarray*}
			&& \left\langle A(x)\xi,\xi\right\rangle\le \big(1+(p-2)^+\big) |\xi|^2
			\int_0^1|\bq(x)+t\eta(x)|^{p-2} \,dt.
		\end{eqnarray*}
		Hence, making again use of \eqref{triang_q-teta},
		$$ \left\langle A(x)\xi,\xi\right\rangle\le \big(1+(p-2)^+\big) \left(\frac32\right)^{p-2}|\bq|^{p-2}|\xi|^2
		$$
		if $p\ge2$, and
		$$ \left\langle A(x)\xi,\xi\right\rangle\le\frac{1+(p-2)^+}{2^{p-2}}\,|\bq|^{p-2}|\xi|^2
		$$
		if $p\in(1,2)$, for any $\xi\in H\G_x$.
	 The proof of Lemma \ref{lemma:unifell} is completed by choosing 
  \begin{equation*}
      \lambda:=\left\{\begin{matrix}
\frac{1-(2-p)^+}{2^{p-2}}, \ &\text{ if }p\geq2\vspace{1em}\\ 
\big(1-(2-p)^+\big)\left(\frac32\right)^{p-2}, \ \ &\text{ if }p\in (1,2)
\end{matrix}\right. 
  \end{equation*}
  and
  \begin{equation*}
      \Lambda:=\left\{\begin{matrix}
\big(1+(p-2)^+\big) \left(\frac32\right)^{p-2},\ &\text{ if }p\geq2\vspace{1em}\\ 
\frac{1+(p-2)^+}{2^{p-2}},\ &\text{ if }p\in (1,2).
\end{matrix}\right.
  \end{equation*}
	\end{proof}

Herewith, we can now state the following result that represent the counterpart of Lemma 2.3 \cite{DS} in Euclidean setting and extends, in the more general nonlinear setting dealt with in this paper, Lemma 3.2 in \cite{FFM}.
 
 \begin{lem}\label{lemma-second-alternative-dichotomy-improved}
Suppose that $p>p^\#=p^\#(Q):=\frac{2Q}{Q+2}$. 
Let $u\in HW^{1,p}(B_1)$ with be such that $u\ge0$ a.e. in $B_1$ and
    \begin{equation}\label{hp-bis}
	J_p(u,B_1)\le (1+\sigma) J_p(v,B_1)
    \end{equation}
for all $v\in HW^{1,p}(B_1)$ such that $u-v\in HW^{1,p}_0(B_1)$. Let
\begin{equation}\label{3.15BIS} a:=\left(\fint_{B_1}\left|\Hnabla u(x)\right|^p \,dx\right)^{1/p}\end{equation}
and suppose that $a\in [a_0,a_1]$, for some $a_1>a_0>0$. Assume also that
		\begin{equation}\label{second-alternative-dichotomy}
			\left(\fint_{B_1}\left|\Hnabla u(x)-\bq(x)\right|^p \,dx\right)^{1/p}\le \varepsilon a,
		\end{equation}
		for some constant horizontal section $\mathbf{q}:\G\to H\G$ as in \eqref{def-q} such that
    \begin{equation}\label{estimate-norm-q}
	\frac{a}{8}<|\mathbf{q}|\le 2C_0a,
    \end{equation}
    where $C_0>0$ is the universal constant in Proposition \ref{dic}.		
    
    There exist $\alpha_0\in(0,1]$ depending on $Q$ and $p$, such that for every $\alpha\in(0,\alpha_0)$ there exist
		\begin{itemize}
            \item$\rho\in(0,1)$, depending on $Q$, $p$ and $\alpha$,
		\item $\varepsilon_0\in(0,1)$, depending on $Q$, $p$, $a_0$, $a_1$ and $\alpha$,
		\item $c_0>0$, depending on $Q$, $p$, $a_0$, $a_1$ and $\alpha$,
		\end{itemize}
		such that, if $\varepsilon\in(0, \varepsilon_0]$ and $\sigma\in(0, c_0\varepsilon^P]$, with $P:=\max\{p,2\}$,
    then  
	\begin{equation}\label{tesi-lemma-improv}
		\left(\fint_{B_\rho}\left|\Hnabla u(x)-\widetilde{\mathbf{q}}(x)\right|^p\,dx\right)^{1/p}\le\rho^{\alpha}\varepsilon a,
	\end{equation}
    where $\mathbf{\widetilde{q}}:\G \to H\G$ is a constant horizontal section, i.e.
        \begin{equation}\label{def-q-tilde}
        \mathbf{\widetilde{q}}(x):= \sum_{j=1}^{m_1}\widetilde{q}_jX_j(x), \quad x\in \G
        \end{equation}
    for some suitable $\widetilde{q}=(\widetilde{q}_1,\ldots,\widetilde{q}_{m_1})\in \R^{m_1}$, with
	\begin{equation}\label{stima-q-q-tilde}
		\left|\mathbf{q}-\mathbf{\widetilde{q}}\right|\le \widetilde{C}\varepsilon a.
	\end{equation}
 for some universal constant $\widetilde{C}>0$.	
 \end{lem}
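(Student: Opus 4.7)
I would compare $u$ with its $(\G,p)$-harmonic replacement $v$ in $B_1$, linearize the $p$-sub-Laplace equation satisfied by $v$ around the constant horizontal section $\mathbf{q}$, and then apply the Schauder-type estimate of Theorem~\ref{thm:c1au} to the resulting linear divergence-form equation. From \eqref{hp-bis}, the minimality of $v$ in the Dirichlet energy \eqref{p-g-minim}, and $\chi_{\{u>0\}}\le 1$, I first estimate $\int_{B_1}(|\Hnabla u|^p-|\Hnabla v|^p)\,dx\le C\sigma(a^p+1)$ exactly as in Step~3 of the proof of Proposition~\ref{dic}. Feeding this into Lemma~\ref{lemma-p-harm-replac} (in both regimes $p\ge 2$ and $1<p<2$) and using $a\in[a_0,a_1]$ together with $\sigma\le c_0\varepsilon^{P}$, where $P=\max\{p,2\}$, yields
$$\fint_{B_1}|\Hnabla u-\Hnabla v|^p\,dx\le C c_0\,\varepsilon^p a^p,$$
so that by the triangle inequality with \eqref{second-alternative-dichotomy}, $\fint_{B_1}|\Hnabla v-\mathbf{q}|^p\,dx\le C\varepsilon^p a^p$.

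Next, set $\ell(x):=\sum_{j=1}^{m_1}q_j x_j$, so $\Hnabla\ell\equiv \mathbf{q}$, and let $w:=(v-\ell)/\varepsilon$. Writing $F(z):=|z|^{p-2}z$ and observing that $\divg F(\mathbf{q})=0$ (because $\mathbf{q}$ is constant), the identity $F(\mathbf{q}+\varepsilon \Hnabla w)-F(\mathbf{q})=\varepsilon A(x)\Hnabla w$ turns $\divg F(\Hnabla v)=0$ into
$$\divg\bigl(A(x)\,\Hnabla w(x)\bigr)=0, \qquad A(x):=\int_0^1 D_zF\bigl(\mathbf{q}+t\varepsilon\Hnabla w(x)\bigr)\,dt.$$
To apply Theorem~\ref{thm:c1au} to $w$ I need $A$ to be uniformly elliptic and Hölder continuous on a smaller ball. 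By Lemma~\ref{lemma:unifell}, uniform ellipticity reduces to the pointwise bound $|\varepsilon\Hnabla w|=|\Hnabla v-\mathbf{q}|<|\mathbf{q}|/2$ on, say, $B_{1/2}$, which I secure by interpolating the $L^p$ estimate above against the interior Lipschitz bound $\sup_{B_{1/2}}|\Hnabla v|\le C_0 a$ from \cite{CM22} (already used in Proposition~\ref{dic}); provided $\varepsilon<\varepsilon_0=\varepsilon_0(a_0,a_1,Q,p)$, this forces $|\Hnabla v-\mathbf{q}|\le a/16 \le|\mathbf{q}|/2$. Hölder continuity of $A$ follows from the $C^{1,\alpha}$ regularity of $(\G,p)$-harmonic functions in step-two Carnot groups.

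Theorem~\ref{thm:c1au} then supplies a $\gamma\in(0,1)$, depending only on $\G$ and $p$, such that, for $\rho$ small,
$$\max_{1\le i\le m_1}\osc_{B_\rho}X_i w\le c\,\rho^\gamma\fint_{B_{1/2}}|\Hnabla w|\,dx\le c\,\rho^\gamma a,$$
since $\fint|\Hnabla w|\le\varepsilon^{-1}\bigl(\fint|\Hnabla v-\mathbf{q}|^p\bigr)^{1/p}\le C a$; Remark~\ref{stima-l-inf-rmk} additionally gives $|\mathbf{p}|\le Ca$ for $\mathbf{p}:=\Hnabla w(0)$. Setting $\widetilde{\mathbf{q}}:=\mathbf{q}+\varepsilon\mathbf{p}$ immediately yields $|\widetilde{\mathbf{q}}-\mathbf{q}|=\varepsilon|\mathbf{p}|\le\widetilde{C}\varepsilon a$, i.e.\ \eqref{stima-q-q-tilde}, and $\bigl(\fint_{B_\rho}|\Hnabla v-\widetilde{\mathbf{q}}|^p\bigr)^{1/p}\le c\rho^\gamma\varepsilon a$. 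Combining this with the closeness produced in the first paragraph (which costs a factor $\rho^{-Q/p}$ once restricted to $B_\rho$) gives, for $\alpha_0:=\gamma$ and any $\alpha\in(0,\alpha_0)$,
$$\Bigl(\fint_{B_\rho}|\Hnabla u-\widetilde{\mathbf{q}}|^p\,dx\Bigr)^{1/p}\le c\,\rho^\gamma\varepsilon a+C\,\rho^{-Q/p}c_0^{1/P}\,\varepsilon a.$$
I first fix $\rho$ so small that $c\,\rho^{\gamma-\alpha}\le 1/2$, then pick $c_0$ small (depending on $\rho$, $Q$, $p$) so that the remaining summand is at most $\rho^\alpha\varepsilon a/2$, and \eqref{tesi-lemma-improv} follows.

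The delicate point is the pointwise step in the linearization: upgrading the $L^p$ proximity $\Hnabla v\approx\mathbf{q}$ to a pointwise one, and verifying Hölder continuity of the linearized coefficient matrix $A$. Both rely on the nonlinear subelliptic regularity developed in Section~\ref{sec:reg}, and are precisely where the standing assumption $p>p^\#$ enters, since it is what allows one to pass from the natural $L^p$ energy estimates to the $L^2$-framework in which Theorem~\ref{thm:c1au} is formulated.
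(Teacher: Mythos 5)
Your proposal has a genuine gap in the very first estimate, and it is the crux of the entire lemma. You claim
$$\int_{B_1}\bigl(|\Hnabla u|^p-|\Hnabla v|^p\bigr)\,dx\le C\sigma(a^p+1),$$
attributing it to Step~3 of the proof of Proposition~\ref{dic}, but that step actually yields
$$\int_{B_1}\bigl(|\Hnabla u|^p-|\Hnabla v|^p\bigr)\,dx\le C\Bigl(\sigma\int_{B_1}|\Hnabla u|^p\,dx+1\Bigr),$$
where the $+1$ is \emph{not} multiplied by $\sigma$: it is the contribution of $|\{u=0\}\cap B_1|$ coming from the characteristic-function part of $J_p$, since the $(\G,p)$-harmonic replacement $v$ is a priori positive on the whole ball while $u$ may vanish on a set of positive measure. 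In Proposition~\ref{dic} this nuisance term is absorbed only because one assumes $a\ge M$ with $M$ arbitrarily large; here $a\in[a_0,a_1]$ is fixed, so the term can be comparable to $a^p$ and cannot be made small by shrinking $\sigma$. Consequently your bound $\fint_{B_1}|\Hnabla u-\Hnabla v|^p\le Cc_0\varepsilon^p a^p$ does not follow, and the whole subsequent chain collapses.

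What is missing is precisely the heart of the paper's proof: a measure estimate of the form $|\{u=0\}\cap B_\tau|\le C\varepsilon^{p+\delta}$ for some $\delta>0$. This is obtained by comparing $u$ to the $\G$-affine function $\ell(x)=b+\sum_j q_j x_j$, using the Poincar\'e inequality together with \eqref{second-alternative-dichotomy} to control $\|u-\ell\|_{L^p(B_{1/10})}$, proving a quantitative lower bound $\ell\ge c_1 a$ on $B_\tau$ by a geometric argument (this is where the bounds \eqref{estimate-norm-q} and the positivity of $u$ enter), and finally invoking the Poincar\'e--Sobolev (for $p<Q$), Morrey ($p>Q$), or Moser--Trudinger ($p=Q$) embedding. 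It is also precisely here, and not in passing from the $L^p$ to an $L^2$ framework as you suggest, that the threshold $p>p^\#=\tfrac{2Q}{Q+2}$ is used: the Sobolev exponent gives $\delta=\tfrac{p^2}{Q-p}$, and one needs $(p+\delta)\tfrac{p}{2}>p$ (so that the term $\varepsilon^{(p+\delta)p/2}$ can be absorbed into $\varepsilon^p$ when $1<p<2$), which is equivalent to $p>p^\#$. Your linearization-and-Schauder step, and the choice $\widetilde{\mathbf q}=\mathbf q+\varepsilon\mathbf p$, is essentially the same as the paper's, so once the measure estimate is incorporated the remainder of your outline can be made to work.
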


	\begin{proof}[Proof of Lemma \ref{lemma-second-alternative-dichotomy-improved}]We divide the proof into several steps.\medskip
	
	\noindent{\em Step 1: energy estimates for the $(\G,p)$-harmonic replacement and comparison of energies.}		
	Let us setting $\tau:=\frac 1 2 \min \left\{ \frac{1}{10\Lambda_\G^2},R_0 \right\}$, where $\Lambda_\G$ in the structural constant given by \eqref{rhod} depending only on $\G$, and $R_0>0$ is the constant given by Theorem 2.2 in \cite{Lu95}. Since $\Lambda_\G\geq 1$, it results that $\tau<1$. Let $\bar{v}$ denote the $(\G,p)$-harmonic replacement of $u$ in $B_{\tau}$ and let $v$ be defined as
		\begin{equation}\label{defin-of-special-competitor}
			v:=\begin{cases}
				\bar{v}&\mbox{in }B_{\tau},\\
				u&\mbox{in }B_1\setminus B_{\tau}.
			\end{cases}
		\end{equation}
    By definition \eqref{defin-of-special-competitor}, $v\in HW^{1,p}(B_1)$ and $u-v\in HW^{1,p}_0(B_1)$ thus, by hypothesis \eqref{hp-bis}, we have that \[J_p(u,B_1)\le (1+\sigma)J_p(v,B_1),\] this together with the additivity property of the functional $J_p$ with respect to the reference domain leads to
    \begin{align}\label{J_p-B-tau_lemma}
	J_p(u,B_{\tau})=\;&J_p(u,B_{1})- J_p(u,B_1\setminus B_{\tau})\nonumber \\
        \le\;& (1+\sigma)\,J_p(v,B_1)-J_p(u, B_1\setminus B_{\tau})\nonumber  \\              
        =\;& J_p(v,B_{\tau})+J_p(v,B_1\setminus B_{\tau})+\sigma J_p(v,B_{\tau})+\sigma J_p(v,B_1\setminus B_{\tau})-J_p(u,B_1\setminus B_{\tau})\nonumber \\ 
        =\;& J_p(v,B_{\tau})+J_p(v,B_1\setminus B_{\tau})+\sigma J_p(v,B_1)-J_p(u,B_1\setminus B_{\tau})\\
	=\;&J_p(v,B_{\tau})+J_p(u,B_1\setminus B_{\tau})+\sigma  J_p(v,B_1)-J_p(u,B_1\setminus B_{\tau})\nonumber	\\			=\;&J_p(v,B_{\tau})+\sigma J_p(v,B_1).\nonumber
    \end{align}
    By the definition of $J_p$ in \eqref{def-J-p}, \eqref{J_p-B-tau_lemma} reads as
		\begin{align*}
		  \int_{B_{\tau}}\left|\Hnabla u(x)\right|^p\,dx+\left|\left\{u>0\right\}\cap B_{\tau}\right|\leq &\int_{B_{\tau}}\left|\Hnabla v(x)\right|^p\,dx+\left|\left\{u>0\right\}\cap B_{\tau}\right|+\sigma J_p(v,B_1)\\
            \leq & \int_{B_{\tau}}\left|\Hnabla v(x)\right|^p\,dx+\left|B_{\tau}\right|+\sigma J_p(v,B_1),
		\end{align*}
    which, since $u\geq 0$ a.e. in $B_1$ by assumption, yields that
		\begin{equation}\label{diff-norm-grad-zero-lev-set}
			\int_{B_{\tau}}\big(\left|\Hnabla u(x)\right|^p-\left|\Hnabla v(x)\right|^p\big)\,dx
			\le \left|\left\{u=0\right\}\cap B_{\tau}\right|+\sigma J_p(v,B_1).
		\end{equation}
    Moreover, by definition of $v$ in \eqref{defin-of-special-competitor}, we have
	\begin{equation}\begin{split}\label{J_p-v-B1}
		J_p(v,B_1) =\;& \int_{B_1}\Big( |\Hnabla v(x)|^p+\chi_{\{v>0\}}(x)\Big)\,dx\\
                \le\;&\int_{B_{\tau}}|\Hnabla v(x)|^p\,dx +\int_{B_1\setminus B_{\tau}}|\Hnabla v(x)|^p\,dx+|B_1|\\ 
                =\; & \int_{B_{\tau}}|\Hnabla \bar{v}(x)|^p\,dx +\int_{B_1\setminus B_{\tau}}|\Hnabla u(x)|^p\,dx+|B_1|\\ 
                \le\;&\int_{B_1}\left|\Hnabla u\right|^p\,dx+\left|B_1\right|\le\;\left|B_1\right| (a^p+1),
	\end{split}\end{equation} 	
    where in the second inequality we use the fact that $\bar{v}$ is the $(\G,p)$-harmonic replacement of $u$ in $B_{\tau}$ and therefore $\bar{v}$ minimize $p$-Dirichlet energy on $B_\tau$.

    Furthermore, if $p\ge2$, by \eqref{second-ineq-lemma-p-harm-repl}
	and \eqref{diff-norm-grad-zero-lev-set} we deduce that
		\begin{equation*}\label{second-ineq-condition-almost-minim}
			\int_{B_{\tau}}\left|\Hnabla u(x)-\Hnabla v(x)\right|^p\,dx\le
			C \left|\left\{u=0\right\}\cap B_{\tau}\right|+C\sigma J_p(v,B_1),
		\end{equation*}
		for some positive universal constant $C>0$, depending only on $p$. 
		Consequently, exploiting \eqref{J_p-v-B1}, we obtain that
		\begin{equation}\label{third-ineq-condition-almost-minim}
			\int_{B_{\tau}}\left|\Hnabla u(x)-\Hnabla v(x)\right|^p\,dx
			\le C \left|\left\{u=0\right\}\cap B_{\tau}\right|+C\sigma(a^p+1),
		\end{equation}
		up to renaming constant $C>0$, depending only on $p$ and $Q$.
		
If instead $p^\#<p<2$, using \eqref{first-ineq-lemma-p-harm-repl}, \eqref{diff-norm-grad-zero-lev-set} e \eqref{J_p-v-B1} we get 
			\begin{equation}\label{casonuovo}\begin{split}
					& \int_{B_{\tau}}\left|\Hnabla u(x)-\Hnabla v(x)\right|^p\,dx\\
					&\qquad \quad\le\; C\left(\int_{B_{\tau}}\left(\left|\Hnabla u(x)\right|^p-\left|\Hnabla v(x)\right|^p\right)\,dx\right)^{\frac{p}{2}}\left(\int_{B_{\tau}}\big(\left|\Hnabla u(x)\right|+\left|\Hnabla v(x)\right|\big)^p\,dx\right)^{1-\frac{p}{2}}
					\\ &\qquad \quad\le\; C\Big(\left|\left\{u=0\right\}\cap B_{\tau}\right|+\sigma J_p(v,B_1)\Big)^{\frac{p}{2}}
					\left(\int_{B_{\tau}}2^p\left|\Hnabla u(x)\right|^p\,dx\right)^{1-\frac{p}{2}}	\\&\qquad \quad\le\; C
					\Big(\left|\left\{u=0\right\}\cap B_{\tau}\right|+\sigma(a^p+1)\Big)^{\frac{p}{2}}
					a^{p\left(1-\frac{p}{2}\right)}	
					.\end{split}\end{equation}  
    \medskip
	
	\noindent{\em Step 2: measure estimates for the zero level set.}		
		Now, we claim that
		\begin{equation}\label{estimate-zero-set-almost-minim}
			\left|B_{\tau}\cap \left\{u=0\right\}\right|\le C_1\varepsilon^{2+\delta},
		\end{equation}
		for some $C_1>0$ and $\delta>0$.  \medskip
	
	\noindent{\em Step 2.1: comparison with a linear function.}		
		To prove our claim \eqref{estimate-zero-set-almost-minim}, we consider function $\ell :\G \to \R$ defined by 
		\begin{equation}\label{defin-linear-funct}
			\ell(x):= b+\left \langle \mathbf{q}(x), \pi_x(x) \right \rangle=b+\sum_{j=1}^{m_1}q_jx_j,\quad {\mbox{ with }}\quad x\in \G \quad {\mbox{ and }}\quad b:= \fint_{B_{1/10}}u(x)\,dx.
		\end{equation}
		We remark that
		\begin{equation} \label{av-u-l}
                \begin{split}
			(u-\ell)_{B_{1/10}}:=\fint_{B_{1/10}}\big(u(x)-\ell(x)\big)\,dx&=b-\left(b+\fint_{B_{1/10}}\left \langle \mathbf{q}(x), \pi_x(x) \right \rangle\,dx\right)\\
            &=\fint_{B_{1/10}}\sum_{j=1}^{m_1} q_jx_j\,dx=0
            \end{split}
            \end{equation}
            where last equality is a consequence of the symmetry with respect to the identity element of the Carnot-Carath\'eodory ball $B_{1/10}$. Then by the Poincar\'e inequality (see e.g. \cites{Jer,Lu92}) we have that 
	\begin{equation}\label{poincare}
            \begin{split}      
	   \|u-\ell-(u-\ell)_{B_{1/10}}\|_{L^p(B_{1/10})}=
		\left\|u-\ell\right\|_{L^p(B_{1/10})}&\le C\left\|\Hnabla (u-\ell)\right\|_{L^p(B_{1/10})}\\
      &\leq C \left\|\Hnabla (u-\ell)\right\|_{L^p(B_1)},
        \end{split}
        \end{equation}
        for some $C>0$ universal.
		
		Since by Proposition \ref{campi-omogenei0} $\Hnabla \ell =\mathbf{q}$, \eqref{poincare} together with hypothesis \eqref{second-alternative-dichotomy} leads to
		\begin{equation}\label{stima-lp-u-l}
			\fint_{B_{1/10}}\left|u(x)-\ell(x)\right|^p\,dx\le 
			C\fint_{B_1}|\Hnabla (u-\ell)(x)|^p\,dx= C\fint_{B_1}|\Hnabla u(x)-\bq(x)|^p\,dx\le C\varepsilon^pa^p.
		\end{equation}
		Finally we remark that, since by assumption $u\ge 0$, it holds that $\ell^-\le \left|u-\ell\right|$, so by \eqref{stima-lp-u-l}, we obtain that 
		\begin{equation}\label{ineq-average-negative-part}
			\fint_{B_{1/10}}(\ell^-(x))^p\,dx\le C\varepsilon^pa^p,
		\end{equation}
        for some $C>0$ universal.
		  \medskip
	
	\noindent{\em Step 2.2: lower bounds on the $\G$-affine function.}		
		Now we claim that if $\varepsilon$ is sufficiently small,
		\begin{equation}\label{lower-bound-linear-funct-1}
			\ell\ge c_1a\quad\mbox{in }B_{\tau},
		\end{equation}	
		for some $c_1>0$. To check this, we argue by contradiction assuming that
		$$ \min_{x\in \overline{B_{\tau}}}\ell(x)<ca$$
		for any $c>0$. 
		We notice that, for every $x\in B_{\tau}$, recalling that $\tau<\frac{1}{10\Lambda_\G^2}$, we get
            \begin{equation*}
                \left | \ell(x)-b \right |=\left | \left \langle \mathbf{q}(x),\pi_x(x) \right \rangle \right |\leq \left | \mathbf{q} \right |\left | x^{(1)} \right |\leq \left | \mathbf{q} \right |\left | x \right|_{\G}\leq \left | \mathbf{q} \right | \Lambda_\G \,d_c(e,x)\leq \Lambda_\G \frac{\left | \mathbf{q} \right |}{10\Lambda_\G^2}=\frac{\left | \mathbf{q} \right |}{10\Lambda_\G},
            \end{equation*}
            where $\Lambda_\G\geq 1$ is the constant given by \eqref{rhod} which only depends on the group $\G$. 
        As a consequence 
		$$ -\frac{\left | \mathbf{q} \right |}{10\Lambda_\G}\le \ell(x)-b\le\frac{\left | \mathbf{q} \right |}{10\Lambda_\G} \qquad \text{ for any }x\in B_{\tau}$$
	and therefore
		$$  ca>\min_{x\in \overline{B_{\tau}}}\ell(x)\ge b-\frac{\left | \mathbf{q} \right |}{10\Lambda_\G}$$
	that leads to
	\begin{equation}\label{stima-sopra-b}
		b\le  ca +\frac{|\mathbf{q}|}{10\Lambda_\G},
        \end{equation}
        for any $c>0$.
        Now, taking into account the usual identifications given by exponential coordinates, let us consider 
	\begin{align*}
            {\mathcal{B}}:=\Big\{x=\left [ x^{(1)},x^{(2)} \right ]\in \R^{m_1}\times \R^{m_2}\equiv \G:\;&x^{(1)}_j=-\frac{tq_j}{|\mathbf{q}|}+\eta_j,\; \text{for }j=1,\ldots,m_1; \\
             &x^{(2)}_i=\xi_i,\; \text{for }i=m_1+1,\ldots,m_2 \;{\mbox{ 
                for some }} (t,\eta,\mathbf{\xi})\in\mathcal{A}\Big\},
        \end{align*}
        where we set 
        \begin{equation*}
            \mathcal{A}:=\left \{ (t,\eta,\mathbf{\xi})\in \R\times \R^{m_1}\times \R^{m_2}: \; t\in\left[\frac{1}{4\Lambda_\G},\frac{3}{10\Lambda_\G}\right],\;\sum_{j=1}^{m_1}\eta_j^2\leq \frac{1}{100\Lambda_\G^2}{\mbox{  and  }} \sum_{i=m_1+1}^{m_2}\xi_i^2\leq \frac{1}{100\Lambda_\G^4} \right\}.
        \end{equation*}
	We notice that if $x\in{\mathcal{B}}$ then 
        \begin{equation*}\begin{split}                   d_c(e,x)^4&\leq\Lambda_\G^4|x|_{\G}^4 = \Lambda_\G^4\Big(\sum_{j=1}^{m_1}\big(-\frac{tq_j}{|\bq|}+\eta_j\big)^2\Big)^2+\Lambda_\G^4\sum_{i=m_1+1}^{m_2}\xi_i^2\leq 4\Lambda_\G^4\Big(t^2+\sum_{j=1}^{m_1}\eta_j^2\Big)^2+\Lambda_\G^4\sum_{i=m_1+1}^{m_2}\xi_i^2 \\
        &\leq 4\Lambda_\G^4 \Big(\frac{9}{100\Lambda_\G^2}+\frac1{100\Lambda_\G^2}\Big)^2+\frac{\Lambda_\G^4}{100\Lambda_\G^4}=\frac{4}{100}+\frac{1}{100}=\frac{1}{20}<\frac{1}{10},
        \end{split}
        \end{equation*}
        so we have that
        \begin{equation}\label{calB-in-B1}
			{\mathcal{B}}\subseteq B_{1/10}.
        \end{equation}
        Furthermore, by \eqref{estimate-norm-q} and \eqref{stima-sopra-b}, we have that, for $x\in{\mathcal{B}}$,
		\begin{eqnarray*}
			\ell(x)=b-t|\mathbf{q}|+\sum_{j=1}^{m_1}q_j\eta_j \le ca +\frac{|\mathbf{q}|}{10\Lambda_\G}-\frac{|\mathbf{q}|}{4\Lambda_\G}+\frac{|\mathbf{q}|}{10\Lambda_\G}
            = ca- \frac{1}{20\Lambda_\G}|\mathbf{q}|.
		\end{eqnarray*}
	Now, using hypothesis \eqref{estimate-norm-q}, we get 
        $$ \ell(x)\leq ca - \frac{1}{160\Lambda_\G}a.$$
        Then, taking $c\in\left(0, \frac{1}{320\Lambda_\G}\right)$, we have infer that $$ \ell(x)\le -\frac{a}{320\Lambda_\G}.$$
 
		Accordingly, using this and \eqref{calB-in-B1} into \eqref{ineq-average-negative-part}, we obtain that
		\begin{eqnarray*}
			&& C\,|B_1|\,\varepsilon^pa^p\geq C\,|B_{1/10}|\,\varepsilon^pa^p\geq\int_{B_{1/10}}(\ell^-(x))^p\,dx\int_{{\mathcal{B}}}(\ell^-(x))^p\,dx\ge\int_{{\mathcal{B}}}\left( \frac{a}{320\Lambda_\G}\right)^p\,dx\ge \overline{c} a^p,
		\end{eqnarray*}
		for some positive universal constant $\overline{c}$. This establishes the desired contradiction if $\e$ is sufficiently small, and thus the proof of \eqref{lower-bound-linear-funct-1} is complete.
		
  \medskip
		
    \noindent{\em Step 2.3: conclusion of the proof of \eqref{estimate-zero-set-almost-minim}}.
	We can now address the completion of the proof of the measure estimate in \eqref{estimate-zero-set-almost-minim}. To this end, we distinguish the three following cases: $p\in (1,Q),$ $p=Q$ and $p>Q$.\medskip
	
	\medskip
	
	\noindent{\em Step 2.3.1: the case $p<Q$.}	
	If $p<Q,$ recalling the Poincar\'e-Sobolev inequality (see e.g. \cites{FLW95,Lu94}), we get 
		\begin{equation}\label{Sobolev-Poincare-ineq}
            \begin{split}
		\left(\int_{B_{1/10}}\left|u(x)-\ell(x)\right|^{p^*}\,dx\right)^{1/p^*}&\le C\left(\int_{B_{1/10}}\left|\Hnabla \big(u(x)-\ell(x)\big)\right|^{p}\,dx\right)^{1/p}\\
        &\leq C\left(\int_{B_{1}}\left|\Hnabla \big(u(x)-\ell(x)\big)\right|^{p}\,dx\right)^{1/p}\\            \end{split}
            \end{equation}
		for some $C>0$ universal, where
		\[p^*:= \frac{Qp}{Q-p}.\]		
		Then, by virtue of \eqref{second-alternative-dichotomy}, \eqref{defin-linear-funct} and \eqref{Sobolev-Poincare-ineq}, we get that 
		\begin{eqnarray*}
			&&\left(\displaystyle\int_{B_{1/10}}\left|u(x)-\ell(x)\right|^{p^*}\,dx\right)^{1/p^*}
			\le C	 \left(\int_{B_1}\left|\Hnabla \big(u(x)-\ell(x)\big)\right|^{p}\,dx\right)^{1/p}\\
			&&\qquad \qquad \qquad \qquad \qquad \qquad  \ \ \ \ \,= C \left(\int_{B_1}\left|\Hnabla u(x)-\bq(x)\right|^{p}\,dx\right)^{1/p}
			\le C\varepsilon a.
		\end{eqnarray*}
		that together with \eqref{lower-bound-linear-funct-1} entail that
		\begin{align*}
			C\varepsilon a \ge&\left(\int_{B_{1/10}}\left|u(x)-\ell(x)\right|^{p^*}\,dx\right)^{1/p^*}\\\vspace{1em}
            \ge& \left( \int_{B_{\tau}\cap  \left\{u=0\right\}}\left|\ell(x)\right|^{p^*}\,dx\right)^{1/p^*} 
            \ge c_1a\left|B_{\tau}\cap  \left\{u=0\right\}\right|^{1/p^*},
		\end{align*}
		and thus, up to renaming constants,		
		\begin{equation}\label{Sobolev-Poincare-ineq-p-<-Q-1}
			\left|B_{\tau}\cap  \left\{u=0\right\}\right|\le C\varepsilon^{p^*}.
		\end{equation}
		Now we notice that
		\[p^*=\frac{Qp+p^2-p^2}{Q-p}=p+\frac{p^2}{Q-p}.\]
		Therefore, setting 
		\begin{equation}\label{casepminoren}\delta:=\frac{p^2}{Q-p}>0,
		\end{equation}
		we obtain \eqref{estimate-zero-set-almost-minim} from \eqref{Sobolev-Poincare-ineq-p-<-Q-1}
		in the case $p<Q$.\medskip
	
	\noindent{\em Step 2.3.2: the case $p>Q$.}		
		If instead $p>Q$, by Morrey-type inequality \cite{Lu96}*{Theorem 1.1}(see e.g. also \cite{FLW95}) it result 
		\begin{equation*}
		\sup\limits_{x,y\in B_{1/10}, x\neq y}\left|u(x)-\ell(x)-(u(y)-\ell(y))\right|\le 
			C\left( \int_{B_1}\left|\Hnabla \big(u(x)-\ell(x)\big)\right|^{p}\,dx\right)^{1/p}	\end{equation*}
		for some $C>0$ depending on $p$ and $Q$. Now, by \eqref{av-u-l}, we note that for any $x\in B_{1/10}$
        \begin{equation*}
        \begin{split}
            |u(x)-\ell(x)|&=|u(x)-\ell(x)-(u-l)_{B_{1/10}}|\\&=\big|u(x)-\ell(x)-\fint_{B_{1/10}}u(y)-\ell(y)\,dy\big| =\fint_{B_{1/10}}\left|u(x)-\ell(x)-(u(y)-\ell(y))\right|\, dy\\&\leq\sup\limits_{x,y\in B_{1/10}, x\neq y}\left|u(x)-\ell(x)-(u(y)-\ell(y))\right|,
        \end{split}    
        \end{equation*}
	that implies, making again use of \eqref{second-alternative-dichotomy} and \eqref{defin-linear-funct},
		\begin{align*}
			\sup_{B_{1/10}}\left|u-\ell\right|\le C\left( \int_{B_1}\left|\Hnabla (u(x)-\ell(x))\right|^{p}\,dx\right)^{1/p} \le C\e
			a.
		\end{align*}
		As a consequence, by \eqref{lower-bound-linear-funct-1}, for all $x\in B_{\tau}$,
		\begin{eqnarray*}
			C\e a\ge \sup_{B_{1/10}}\left|u-\ell\right|\ge \sup_{B_{\tau}}(\ell-u)\ge \ell(x)-u(x)\ge
			c_1a-u(x),
		\end{eqnarray*}
		and thus $u(x)\ge c_1a-C\e a>0$ for all $x\in B_{\tau}$, as long as $\e$ is sufficiently small.
		Accordingly, it follows that
		\begin{equation}\label{si3875bvc9876bv546980987-0987-70-986}
		|B_{\tau}\cap  \left\{u=0\right\}|=0,\end{equation}
		and this gives \eqref{estimate-zero-set-almost-minim} in the case $p>Q$ as well.
		\medskip
	
	\noindent{\em Step 2.3.3: the case $p=Q$.}
It remains to analyze the case $p=Q$. For this purpose,
		we point out that $u-\ell\in W^{1,Q}(B_1)$, and so we can apply Theorem 2.2 in \cite{Lu95} to obtain that
		\begin{equation}\label{app-moser-trud}
			\int_{B_\tau}\exp\left(A\frac{\left|u(x)-\ell(x)\right|}{\left\|\Hnabla (u-\ell)
				\right\|_{L^Q(B_1)}}\right)^{\frac{Q}{Q-1}}\,dx
			\le C\left|B_\tau\right|\leq C\left|B_1\right|,
		\end{equation}
		where $A>0$ and $C>0$ are positive universal constants. 
		
		Now, since for all $t\geq 0$, it exist a constant $c_0>0$ sufficiently big such that $e^t\ge c_0 t^{Q}$, by \eqref{app-moser-trud}, \eqref{second-alternative-dichotomy} and \eqref{defin-linear-funct} it follows
		\begin{equation}\label{ineq-case-p-=-Q-2}
			\int_{B_\tau}\left|u(x)-\ell(x)\right|^{\frac{Q^2}{Q-1}}\,dx\le C
			\left\|\Hnabla u-\bq\right\|_{L^Q(B_\tau)}^{\frac{Q^2}{Q-1}}\leq C
                \left\|\Hnabla u-\bq\right\|_{L^Q(B_1)}^{\frac{Q^2}{Q-1}}
                \le C\varepsilon^{\frac{Q^2}{Q-1}}
			a^{\frac{Q^2}{Q-1}},	
            \end{equation}
		for some relabeled constant $C>0$. Thus by \eqref{lower-bound-linear-funct-1} and \eqref{ineq-case-p-=-Q-2},
		\begin{eqnarray*}
			&& C\varepsilon^{\frac{Q^2}{Q-1}}
			a^{\frac{Q^2}{Q-1}}\ge
			\int_{B_\tau}\left|u(x)-\ell(x)\right|^{\frac{Q^2}{Q-1}}\,dx\ge \int_{B_{\tau}\cap\{u=0\}}
			\left|\ell(x)\right|^{\frac{Q^2}{Q-1}}\,dx\\&&\qquad\qquad\ \ \, \,\ge c_1^{\frac{Q^2}{Q-1}}a^{\frac{Q^2}{Q-1}}|B_{\tau}\cap\{u=0\}|
			.\end{eqnarray*}
		
	Now, we point out that 
		\[\frac{Q^2}{Q-1}=\frac{Q^2-Q+Q}{Q-1}=Q+\frac{Q}{Q-1},\]
		and therefore, choosing
		$$\delta:=\frac{Q}{Q-1}>0,$$
		we establish \eqref{estimate-zero-set-almost-minim} in the case $p=Q$.\medskip
	
	\noindent{\em Step 3: energy comparison in Lebesgue spaces.}	
		If $p\ge2$, \eqref{third-ineq-condition-almost-minim} together with \eqref{estimate-zero-set-almost-minim} ensures
		\begin{equation}\label{forth-ineq-condition-almost-minim}
			\int_{B_{\tau}}\left|\Hnabla u(x)-\Hnabla v(x)\right|^p\,dx\le C_1\varepsilon^{p+\delta}
			+C\sigma(a^p+1).
		\end{equation}
If instead $1<p<2$, by \eqref{casonuovo} and \eqref{estimate-zero-set-almost-minim} we get	
			\begin{equation}\label{forth-ineq-condition-almost-minimBIS}
				\int_{B_{\tau}}\left|\Hnabla u(x)-\Hnabla v(x)\right|^p\,dx\le
				C \Big(C_1\e^{p+\delta}+\sigma(a^p+1)\Big)^{\frac{p}{2}}
				a^{p\left(1-\frac{p}{2}\right)}.
			\end{equation}	\medskip
	
	\noindent{\em Step 4: estimates the $\G$-linear perturbation of the $(\G,p)$-harmonic replacement.}	
		Now we aim to show that, even if $v-\langle \bq,\pi_{\cdot}\rangle$ is not the $(\G,p)$-harmonic replacement of $u-\langle \bq,\pi_{\cdot}\rangle$, as in the case $p=2$ (see Lemma 3.2 in \cite{FFM}),  it satisfies an appropriate equation in divergence form, as the ones studied in section \ref{sec:reg}.	\medskip
	
	\noindent{\em Step 4.1: perturbative energy estimates.}	
    If $p\ge2$, by \eqref{second-alternative-dichotomy} and \eqref{forth-ineq-condition-almost-minim}, we obtain
		\begin{equation}\label{soiw39vh43v5834v5660987654321uytr}\begin{split}
				\int_{B_{\tau}}\left|\Hnabla v(x)-\bq(x)\right|^p\,dx \le\;& 2^{p-1}\left(
				\int_{B_{\tau}}\left|\Hnabla u(x)-\bq(x)\right|^p\,dx
				+\int_{B_{\tau}}\left|\Hnabla v(x)-\Hnabla u(x)\right|^p\,dx\right)\\
				\le\; &  C_2\varepsilon^pa^p+C_1\varepsilon^{p+\delta}+C\sigma(a^p+1),
		\end{split}\end{equation}
		up to renaming constants.
		
    While if $1<p<2$, using \eqref{second-alternative-dichotomy}
			and \eqref{forth-ineq-condition-almost-minimBIS} we get
			\begin{equation}\label{soiw39vh43v5834v5660987654321uytrBIS}\begin{split}
					\int_{B_{\tau}}\left|\Hnabla v(x)-\bq(x)\right|^p\,dx \le\;& 2^{p-1}\left(
					\int_{B_{\tau}}\left|\Hnabla u(x)-\bq(x)\right|^p\,dx
					+\int_{B_{\tau}}\left|\Hnabla v(x)-\Hnabla u(x)\right|^p\,dx\right)\\
					\le\; &  C_2\varepsilon^pa^p+C \Big(C_1\e^{p+\delta}+\sigma(a^p+1)\Big)^{\frac{p}{2}}
					a^{p\left(1-\frac{p}{2}\right)},
			\end{split}\end{equation}
		up to renaming constants.		Now in the case of $p\ge2$, we suppose $\sigma\le c_0\varepsilon^p$,  for some constant $c_0>0$ that we will determine precisely later. Since by assumption $a\in[a_0,a_1]$, by \eqref{soiw39vh43v5834v5660987654321uytr}, we infer that
		\begin{equation}\label{first-ineq-p-harm-repl-minus-lin-funct}
			\int_{B_{\tau}}\left|\Hnabla v(x)-\bq(x)\right|^p\,dx\le C_2\varepsilon^pa^p+C_1\varepsilon^{p+\delta}+Cc_0\varepsilon^p(a^p+1)
			\le C\varepsilon^pa^{p},
		\end{equation}
		up to relabeling $C>0$.
		
		Similarly, if $1<p<2$, we take $\sigma\le c_0\varepsilon^2$, with $c_0$ to be made precise later. In this case, we deduce from \eqref{soiw39vh43v5834v5660987654321uytrBIS} that
			\begin{equation}\label{first-ineq-p-harm-repl-minus-lin-functBIS-2000}\begin{split}
				\int_{B_{\tau}}\left|\Hnabla v(x)-\bq(x)\right|^p\,dx&\le C_2\varepsilon^pa^p+C \Big(C_1\e^{p+\delta}+c_0 \e^2(a^p+1)\Big)^{\frac{p}{2}}
				a^{p\left(1-\frac{p}{2}\right)}
				\\&\le C_1\varepsilon^{(p+\delta)\frac{p}{2}} a^{p }+C_2\varepsilon^{p} a^{p },
			\end{split}\end{equation}
			up to renaming the constants.
		Hitherto we have not used the assumption $p>p^\#=p^\#(Q)$, however now we use it \label{lemma-second-alternative-dichotomy-improved:PAGINA}to reabsorb the term $\varepsilon^{(p+\delta)\frac{p}{2}} a^{p }$ into the term $\varepsilon^{p} a^{p }$ appearing in \eqref{first-ineq-p-harm-repl-minus-lin-functBIS-2000}. 
	Indeed, to avoid trivial situations we can suppose that $Q>2$, so we fall the case $p< Q$, and thus, recalling the value of $\delta$ given in \eqref{casepminoren}, we obtain that
		\begin{equation}\label{condizione-su-p}\text{if} \ p>p^\#=\frac{2Q}{Q+2} \quad \text{then}\quad \left(p+\delta\right)\frac p 2 =\left(p+\frac{p^2}{Q-p}\right)\frac p 2 =\frac{Qp^2}{2(Q-p)}>p,
        \end{equation}
		thus if $p\in \left(Q,2\right)$, by \eqref{first-ineq-p-harm-repl-minus-lin-functBIS-2000}, we obtain that, 
				\begin{equation}\label{first-ineq-p-harm-repl-minus-lin-funct-1<p<2}
				\int_{B_{\tau}}\left|\Hnabla v(x)-\bq(x)\right|^p\,dx\le C\varepsilon^{p} a^{p },\end{equation}
			for some constant $C>0$.	
			
			As a consequence of \eqref{first-ineq-p-harm-repl-minus-lin-funct} and \eqref{first-ineq-p-harm-repl-minus-lin-funct-1<p<2} we conclude that, if $p>p^\#$
			\begin{equation}\label{first-ineq-p-harm-repl-minus-lin-functBIS-2}
				\int_{B_{\tau}}\left|\Hnabla v(x)-\bq(x)\right|^p\,dx\le C\varepsilon^{p} a^{p }.
		\end{equation} \medskip

       \noindent{\em Step 4.3: pointwise estimates for the gradient.}	
			Now, we claim that, for all $\e>0$ is sufficiently small,
			for all $x\in B_{\tau/2}$,
			\begin{equation}\label{second-ineq-p-harm-repl-minus-lin-funct}
				\left|\Hnabla v(x)-\bq(x)\right|\le C (\varepsilon a)^{\nu},
			\end{equation}
			for some $C>0$ and $\nu\in(0,1)$. Indeed, suppose by contradiction that
			\begin{equation}\label{assurdo-massimalita-stime-pt-grad}
				\max_{x\in\overline{B_{\tau/2}}}|\Hnabla v(x)-\bq(x)|> C (\varepsilon a)^{\nu}\end{equation}
			for all $C>0$ and $\nu\in(0,1)$. Since $v$ is $(\G,p)$-harmonic in $B_{\tau}$, by Theorem 1.3 in \cite{CM22}, it result that $\Hnabla v\in C^{0,\alpha}_{loc}(B_\tau,H\G)$ (and then $\Hnabla v-\bq \in C^{0,\alpha}_{loc}(B_\tau,H\G)$) for some $\alpha\in (0,1]$ depending on $\G$ and $p$. Thus we can assume that the maximum in \eqref{assurdo-massimalita-stime-pt-grad} is achieved, i.e. it exist $\overline{x}\in\overline{B_{\tau/2}}$
			be such that
			\begin{equation}\label{massimalita-xsgn}
				|\Hnabla v(\overline{x})-\bq(\overline{x})|=\max_{x\in\overline{B_{\tau/2}}}|\Hnabla v(x)-\bq(x)|
				> C (\varepsilon a)^{\nu}.\end{equation}
    Moreover, again for Theorem 1.3 in \cite{CM22} we have that for any $0<r<\tau/4$, and for any $x\in B_{r}(\overline{x})$
    \begin{equation}\label{applic-cm}
        \max_{1\leq i\leq m_1}|X_iv(x)-X_iv(\overline{x})|\leq c \left(\frac{r}{\tau}\right)^\alpha \fint_{B_{\tau/4}(\overline{x})}|\Hnabla v(x)|\,dx \leq c \left(\frac{r}{\tau}\right)^\alpha \|\Hnabla v\|_{L^\infty(B_{\tau/4}(\overline{x}))}\end{equation}
	for some $c>0$ and $\alpha\in(0,1]$, depending only on $\G$ and $p$. Now applying, Theorem 1.1 in \cite{CM22} and \eqref{defin-of-special-competitor}, we have
\begin{equation}\label{applic-cm-2}
    \|\Hnabla v\|_{L^\infty(B_{\tau/4}(\overline{x}))}\le \|\Hnabla v\|_{L^\infty(B_{3\tau/4})}\le  \overline{C}\|\Hnabla v\|_{L^p(B_{\tau})}\le \overline{C} \|\Hnabla u\|_{L^p(B_{\tau})}\le \overline{C} a_1,
\end{equation}
for some $\overline{C}>$ depending only on $\G$ and $p$.
Now we chose $\e>0$ sufficiently small such that $$\left(\frac {C (\e a)^{\nu} }{2 c\,\overline{C}a_1}\right)^{1/\alpha}<\frac{1}{4}.$$ Thus choosing $r=r^\ast:=\tau\,\left(\frac {C (\e a)^{\nu} }{2 c\,\overline{C}a_1} \right)^{1/\alpha}$ in \eqref{applic-cm}, we have that $B_{r^\ast}(\overline{x})\subset B_{\tau/4}(\overline{x})\subset B_\tau$ and combing \eqref{applic-cm} \eqref{applic-cm-2} and \eqref{massimalita-xsgn}, for all $x\in B_{r^\ast}(\overline{x})$ we have
        \begin{eqnarray*}&&
				|X_iv(x)-q_i|\ge |X_iv(\overline{x})-q_i|-|(X_iv(x)-q_i) -(X_iv(\overline{x})-q_i)|
                    \\&&\qquad\qquad \qquad \ge C (\varepsilon a)^{\nu}	-c\,\overline{C}\left(\frac{r^\ast}{\tau}\right)^\alpha a_1
                    \\&&\qquad\qquad \qquad = 
				C (\varepsilon a)^{\nu}- \frac{C (\varepsilon a)^{\nu}}2=\frac{C (\varepsilon a)^{\nu}}2
	\end{eqnarray*}
for all $i=1,\ldots,m_1$. As a consequence, for some constant $\hat{c}$ depending only on $\G$, it result
			\begin{eqnarray*}
				&&\int_{B_{\tau}}\left|\Hnabla v(x)-\bq(x)\right|^p\,dx \ge 
				\int_{ B_{r^\ast}(\overline{x}) }
				\left|\Hnabla v(x)-q\right|^p\,dx \ge \hat{c} \int_{ B_{r^\ast}(\overline{x})} \max_{1\leq i\leq m_1} |X_iv(x)-q_i|\, dx\\
            &&\qquad\qquad\qquad \qquad \qquad \quad \geq\hat{c}\left(\frac{C (\varepsilon a)^{\nu}}2\right)^p \left|B_{r^\ast}(\overline{x})\right|=\hat{c}\left(\frac{C (\varepsilon a)^{\nu}}2\right)^p \tau^Q\left(\frac {C (\e a)^{\nu} }{2 c\,\overline{C}a_1} \right)^{Q/\alpha}|B_1|\\
            &&\qquad\qquad\qquad \qquad \qquad \quad =\hat{c}\,\frac{ C^{p+\frac{Q}{\alpha}}|B_\tau|}{2^{p+\frac{Q}{\alpha}}\, (c\,\overline{C}a_1)^{\frac{Q}{\alpha}}}\, (\e a)^{\nu\left(p+\frac{Q}\alpha\right)}.
			\end{eqnarray*}
			Hence, exploiting \eqref{first-ineq-p-harm-repl-minus-lin-functBIS-2}, we find that
			$$ C\e^p a^{p}\ge \hat{c}\,\frac{C^{p+\frac{Q}{\alpha}} |B_\tau|}{2^{p+\frac{Q}{\alpha}}\, (c\,\overline{C}a_1)^{\frac{Q}{\alpha}} }\,
			(\e a)^{\nu\left( p+\frac{Q}\alpha\right)},$$
			which yields a contradiction as soon as $\e$ is chosen sufficiently small and $\nu\in\left(0,\frac{p}{p+\frac{Q}\alpha}\right).$
						The proof of \eqref{second-ineq-p-harm-repl-minus-lin-funct} is thus complete.\medskip

	\noindent{\em Step 4.4 : the linearized equation and regularity estimates.}	
			Let us define now the function $F:\R^{m_1}\to \R^{m_1}$ as $F(z):=\left|z\right|^{p-2}z$. 
            Let $x\in B_{\tau/4}$, by the usual identification $H\G_x\cong\R^{m_1}$, we have
			\begin{eqnarray}\label{teo-fond}
				&&\left|\Hnabla v(x)\right|^{p-2}\Hnabla v(x)-\left|\bq(x)\right|^{p-2}\bq(x)=
				F(\Hnabla v(x)-F(\Hnabla\langle\bq(x),\pi_x(x)\rangle))\\ \nonumber
                    &&\qquad\qquad\qquad=F(\Hnabla v(x))-F(\bq(x))=\int_0^1\frac{d}{dt}F(t\Hnabla v(x)+(1-t)\bq(x))\,dt\\
				&&\qquad\qquad\qquad=\int_0^1D_zF(t\Hnabla v(x)+(1-t)\bq(x))(\Hnabla v(x)-\bq(x))\,dt, \nonumber
			\end{eqnarray}
            where, for $z\in H\G_x\cong\R^{m_1}$, we set $D_zF=(D_{z_1}F\ldots,D_{z_{m_1}}F)$ the Euclidean gradient of $F$. Now, since $v$ is $(\G,p)$-harmonic in $B_\tau$ taking $\mathrm{div}_\G$ of both sides in \eqref{teo-fond}, we obtain that
			\begin{align}\label{unif-ellip-equat-lemma-dich-improved}
				\mathrm{div}_\G \big(A(x)\Hnabla \big(v(x)-\langle \bq(x),\pi_x (x)\rangle\big)\big)=0\quad \mbox{ in }  B_{\tau/4},
			\end{align}
			with 
			\begin{equation}\label{A-casonostro}
			A(x):=\int_0^1 DF\big(t\Hnabla v(x)+(1-t)\bq(x)\big)\,dt.
			\end{equation}
            Notice that we are in the setting of Lemma \ref{lemma:unifell} with the choice $\eta:=\Hnabla v-\bq$. Indeed, by \eqref{estimate-norm-q} and \eqref{second-ineq-p-harm-repl-minus-lin-funct}, for all $x\in B_{\tau/4}$,
			$$ |\Hnabla v(x)-\bq(x)|\le C (\varepsilon a)^{\nu}\le\frac{a}{16}<\frac{|\bq|}2,
			$$
		as long as $\e$ is sufficiently small. Thus, by exploiting Lemma \ref{lemma:unifell}, for all $x\in B_{\tau/4}$ and $\xi\in H\G_x$ we obtain
			\begin{equation}\label{ellit-A}
			     \lambda\,|q|^{p-2}|\xi|^2\le \langle A(x)\xi,\xi\rangle \le\Lambda\, |q|^{p-2}|\xi|^2,
                \end{equation}
		for some $\Lambda\ge\lambda>0$, depending only on $p$. Recalling \eqref{estimate-norm-q}, \eqref{ellit-A} leads to
		\begin{equation}\label{ellit-A2}
		 \lambda\,a_0^{p-2}|\xi|^2\le\lambda\,a^{p-2}|\xi|^2 \le \langle A(x)\xi,\xi\rangle \le\Lambda\, a^{p-2}|\xi|^2\le \Lambda\, a_1^{p-2}|\xi|^2,
        \end{equation}
			up to renaming $\lambda$ and $\Lambda$, depending on $p$. \medskip
	
	\noindent{\em Step 5: further estimates and conclusion of the proof of Lemma \ref{lemma-second-alternative-dichotomy-improved}.}	
			Now, we point out that we are in the setting of Section \ref{sec:reg}. Indeed, by \eqref{ellit-A2}, we can choose $\nu=\lambda a_0^{p-2}$ and $L=\Lambda a_1^{p-2}$ in \eqref{st.cond1}. Moreover, since by Theorem 1.3 in \cite{CM22}, $\Hnabla v-\bq \in C^{0,\mu}(B_{\tau/8},H\G)$, for some $\mu\in(0,1]$ depending only on $\G$ and $p$, and $A$ as in Lemma \ref{lemma:unifell} is such that $A\in C^{0,\gamma}(B_{\tau/8},H\G)$, for some $\gamma\in(0,1]$ depending only on $p$ (trivially for $p>2$, and by a direct computation in the case $1<p<2$), it follows that $A$ as in \eqref{A-casonostro} is such that $A\in C^{0,\alpha}(B_{\tau/8},H\G)$ for some $\alpha\in(0,1]$ depending only on $\G$ and $p$. This implies straightforwardly that also \eqref{st.cond2} is satisfied. Hence, we are in the position to apply Theorem \ref{thm:c1au}, with $\Omega=B_{\tau/8}$, and $x_0=e$. Let $\bar R>0$ given by Theorem \ref{thm:c1au} with the above choices. Now, up relabeling $\tau$ with $\min\{\tau, \bar R\}$, by \eqref{stima-l-inf-tris}, for every $x\in B_{\tau/8}$, we obtain
			\begin{equation}\label{stima-reg-coeff-var}
				\left|\Hnabla v(x)-\bq(x)\right|^p\le \sup_{B_{\tau/8}}\left|\Hnabla v-\bq(x)\right|^p\le C\fint_{B_{\tau/4}}
				\left|\Hnabla v(y)-\bq\right|^p\,dy\le C\e^p a^{p}, 
			\end{equation}
			for some $C>0$, depending on $Q$ and $p$, where in the last inequality we have also used \eqref{first-ineq-p-harm-repl-minus-lin-functBIS-2}.
			
			Consequently, for all $x\in B_{\tau/8}$,
			\begin{equation}\label{stima_puntuale-gradv-q}
				\left|\Hnabla v(x)-\bq(x)\right|\le 
				C\varepsilon a,
			\end{equation}
			up to renaming $C$, depending on $Q$ and $p$.

        Denoting by $\bar{q}_j:=X_jv(e)-q_j$ for $j=1,\ldots,m_1$, let us define the constant horizontal section $$\bar{\bq}(x):=\sum_{j=1}^{m_1}\bar{q}_jX_j(x), \quad \text{for } x\in \G.$$ By \eqref{stima_puntuale-gradv-q}, for all $x\in B_{\tau/8}$ we have that 
	\begin{equation}\label{stima-bar-q}
		|\bar{\bq}(x)|=|\bar{\bq}(e)|= |\Hnabla v(e)-\mathbf{q}(e)|\le C\varepsilon a.
        \end{equation}
    Hence combining \eqref{stima_puntuale-gradv-q} and \eqref{stima-bar-q}, we deduce that, for all $x\in B_{\tau/8}$,
	\begin{equation*}
		\left|\Hnabla v(x)-\bq(x)-\bar{\bq}(x)\right|\leq \left|\Hnabla v(x)-\bq(x)\right|+\left|\bar{\bq}(x)\right|\le C\varepsilon a,
	\end{equation*}
    up to renaming $C>0$ universal.

	Now, by \eqref{eq:teo-nuovo} of Theorem \ref{thm:c1au}, for all $i=1,\ldots,m_1$ and $x\in B_\rho\subset B_{\tau/8}$, it exists a constant $C>0$ and $\mu\in (0,1]$ depending only on $\G$ and $p$ such that 
    \begin{equation*}
        |X_iv(x)-q_i-\bar{q}_i|=|X_iv(x)-X_iv(e)|\leq C d_c(x,e)^{\mu} \fint_{B_{\tau/8}}|\Hnabla v(x)-\bq(x)|\, dx
    \end{equation*}
    that together Jensen's inequality and \eqref{stima_puntuale-gradv-q} leads to 
    \begin{equation*}
    \begin{split}
        |X_iv(x)-q_i-\bar{q}_i|^p &=|X_iv(x)-X_iv(e)|^p\\&\leq C d_c(x,e)^{p\mu} \fint_{B_{\tau/8}}|\Hnabla v(x)-\bq(x)|^p\, dx\leq C\rho^{p\mu}\e^pa^p.
    \end{split}
    \end{equation*} 
    Thus, to renaming constants, depending only on $\G$ and $p$,  for all $x\in B\rho$, we obtain, 
    \begin{equation*}
        \left|\Hnabla v(x)-\bq(x)-\bar{\bq}(x)\right|^p <C\max_{1\leq i\leq m_1}|X_iv(x)-q_i-\bar q_i|\leq C_2 \rho^{p\mu}\e^pa^p     \end{equation*}
    that implies
    \begin{equation}\label{ineq-average-nabla-p-harm-repl-minus-tilde-q}
    \fint_{B_\rho}\left|\Hnabla v(x)-\bq(x)-\bar{\bq}(x)\right|^p \, dx \leq C_2 \rho^{p\mu}\e^pa^p     \end{equation}		
for some $\mu\in(0,1]$ and $C_2>0$ depending on $\G$ and $p$.
			
Then, if $p\ge 2$, putting together \eqref{forth-ineq-condition-almost-minim} and \eqref{ineq-average-nabla-p-harm-repl-minus-tilde-q}, we obtain
			\begin{equation}\label{ineq-nabla-almost-minim-minus-tilde-q}\begin{split}&
					\fint_{B_{\rho}}\left|\Hnabla u(x)-\bq(x)-\bar{\bq}(x)\right|^p\,dx\\
					&\qquad \ \le\; 2^{p-1}\left(
					\fint_{B_{\rho}}\left|\Hnabla u(x)-\Hnabla v(x)\right|^p\,dx+
					\fint_{B_{\rho}}\left|\Hnabla v(x)-\bq(x)-\bar{\bq}(x)\right|^p\,dx
					\right)\\
					&\qquad \ \le\; 2^{p-1}C_1\varepsilon^{p+\delta}
					\rho^{-Q}+2^{p-1}{C}\sigma(a^p+1)\rho^{-Q}+2^{p-1}C_2\rho^{\mu p}\varepsilon^pa^{p}.\end{split}
			\end{equation}
			Now, setting $\alpha_0:=\mu$ and, for every $\alpha \in (0,\alpha_0)$, we choose
			\begin{equation}\label{jdiet7ub8v9wq43275bvc687c693--ppe5vb}  
				\rho:=\min\left\{ (2^{p+1}C_2)^{\frac{1}{(\alpha- \alpha_0)p}},\frac{\tau}{8}\right\}, \qquad
				\e_0:= \left(\frac{\rho^{\alpha p+Q}a_0^p}{2^{p+1}C_1}\right)^{\frac1{\delta}}
\quad{\mbox{and}}\quad
 c_0:=\frac{\rho^{\alpha p+Q} a_0^p}{2^{p+1}\, {C}(a_1^p+1)},
\end{equation}
we have that,
for every $\e\in(0,\e_0]$ and $\sigma\in(0,c_0\e^p]$, 
			\begin{eqnarray*}
				&&		2^{p-1}C_2\rho^{{\mu} p} \le \frac{1}{4}\rho^{\alpha p},\\
				&&			2^{p-1}C_1\varepsilon^{p+\delta} \rho^{-Q}\le\frac{1}{4}\rho^{\alpha p}\varepsilon^p a^p\\
				{\mbox{and }}&& 2^{p-1} {C}\sigma(a^p+1)\rho^{-Q}\le\frac{1}{4}\rho^{\alpha p}\varepsilon^p a^p.
			\end{eqnarray*}	
			As a consequence of this and \eqref{ineq-nabla-almost-minim-minus-tilde-q},
			\begin{align*}
				&\fint_{B_{\rho}}\left|\Hnabla u(x)-\bq-\bar{\bq}\right|^p\,dx\le
				\frac{1}{4}\rho^{\alpha p}\varepsilon^pa^{p }+\frac{1}{4}\rho^{\alpha p}\varepsilon^pa^p+\frac{1}{4}\rho^{\alpha p}\varepsilon^pa^{p }\le \rho^{\alpha p}\varepsilon^pa^{p },
			\end{align*}
			which gives the desired result in \eqref{tesi-lemma-improv} by setting $\widetilde{q}_i:=q_i+\bar{q}_i$, for all $i=1,\ldots,m_1$ in \eqref{def-q-tilde}
			
				Moreover, from \eqref{stima-bar-q} we have that $$ |\bq-\widetilde{\bq}|=|\bar{\bq}|\le C\e a,$$
			which establishes \eqref{stima-q-q-tilde}.	This completes the proof of Lemma \ref{lemma-second-alternative-dichotomy-improved} when $p\ge2$.
				
		In the case $p^\#<p<2$, we use \eqref{forth-ineq-condition-almost-minimBIS} and \eqref{ineq-average-nabla-p-harm-repl-minus-tilde-q} and we see that
				\begin{eqnarray*}
					&&\hspace{-4em}\fint_{B_\rho} |\Hnabla u(x)-\bq(x)-\bar{\bq}(x)|^p\,dx\\
					&\le&2^{p-1}\left(
					\fint_{B_{\rho}}\left|\Hnabla u(x)-\Hnabla v(x)\right|^p\,dx+
					\fint_{B_\rho} |\Hnabla v(x)-\bq(x)-\bar{\bq}(x)|^p\,dx
					\right)\\&\le& 2^{p-1} C \Big(C_1\e^{p+\delta}+\sigma(a^p+1)\Big)^{\frac{p}{2}}
					a^{p\left(1-\frac{p}{2}\right)}\rho^{-Q}+2^{p-1}C_2\rho^{\mu p}\varepsilon^pa^{p}
					\\&\le& 2^{p-1}C_1\varepsilon^{\frac{(p+\delta)p}2}a^{p\left(1-\frac{p}{2}\right)}
					\rho^{-Q}+2^{p-1}{C}\sigma^{\frac{p}2}(a^p+1)^{\frac{p}2}a^{p\left(1-\frac{p}{2}\right)}\rho^{-Q}+2^{p-1}C_2\rho^{\mu p}\varepsilon^pa^{p} \\
					&\le& 2^{p-1}C_1\varepsilon^{p+\widetilde\delta}a^{p\left(1-\frac{p}{2}\right)}
					\rho^{-Q}+2^{p-1}{C}\sigma^{\frac{p}2}(a^p+1)^{\frac{p}2}a^{p\left(1-\frac{p}{2}\right)}\rho^{-Q}+2^{p-1}C_2\rho^{\mu p}\varepsilon^pa^{p},
				\end{eqnarray*}
				where $\widetilde\delta:=(p+\delta)p/2-p>0$, by \eqref{condizione-su-p} (and up
				to renaming $C$ and $C_1$, depending on $\G$ and $p$).
				
				In this case, setting $\alpha_0:=\mu$ and, for all $\alpha\in(0,\alpha_0)$, we take $\rho$
				as in \eqref{jdiet7ub8v9wq43275bvc687c693--ppe5vb} and   
				\begin{equation*}
				\e_0:= \left(\frac{\rho^{\alpha p+Q}a_0^{\frac{p^2}2}}{2^{p+1}C_1}\right)^{\frac1{\widetilde\delta}}
				\quad{\mbox{and}}\quad
				c_0:=\frac{\rho^{2\alpha +\frac{Q}{p}} a_0^{p}}{4^{\frac{p+1}p}\, {C}^{\frac2{p}}(a_1^p+1)},\end{equation*}		
				obtaining that, for all $\e\in(0,\e_0]$ and $\sigma\in(0,c_0\e^2]$,
				$$ 	\fint_{B_\rho} |\Hnabla u(x)-\bq(x)-\bar{\bq}(x)|^p\,dx\le \rho^{\alpha p}\e^pa^p.$$
Hence, by setting $\widetilde{q}_i:=q_i+\bar{q}_i$ for all $i=1,\ldots,m_1$ in \eqref{def-q-tilde} the desired results in \eqref{tesi-lemma-improv} 
and \eqref{stima-q-q-tilde} follow from this and \eqref{stima-bar-q}.

The proof of Lemma \ref{lemma-second-alternative-dichotomy-improved} is thereby complete.
		\end{proof}
  Iterating Lemma \ref{lemma-second-alternative-dichotomy-improved} we obtain the following estimates:

\begin{cor}\label{corollary-lemma-second-alternative-dichotomy-improved}
	Let $p>p^\#=\frac{2Q}{Q+2}$. Let $u$ be an almost minimizer for $J_p$ in $B_1$ (with constant $\kappa$ and exponent $\beta$) and
	$$ a:=\left(\fint_{B_1}\left|\Hnabla u(x)\right|^p \,dx\right)^{1/p}.$$
	Suppose that it exists $a_1>a_0>0$ such that \begin{equation}\label{A0a1}
		a\in[ a_0,a_1]\end{equation} and that $u$ satisfies \eqref{bound-q} and \eqref{second-alternative-dichotomy}.	
	
	Then there exist $\varepsilon_0$, $\kappa_0$  and $\gamma\in(0,1)$, depending on $Q$, $p$, $\beta$, $a_0$ and $a_1$, such that, for every $\varepsilon\in(0, \varepsilon_0]$ and $\kappa\in(0, \kappa_0\varepsilon^P]$, with $P:=\max\{p,2\}$, then	\begin{equation}\label{first-conclusion-corollary}
		\left\|u-\ell\right\|_{C^{1,\gamma}(B_{1/2})}\le C\varepsilon a.
	\end{equation}
	The positive constant $C$ depends only on $Q$ and $p$ and $\ell$ is a $\G$-affine function of slope $\bq$.
	
	Moreover, 
    \begin{equation}\label{second-conclusion-corollary}
		\left\|\Hnabla u\right\|_{L^{\infty}(B_{1/2})}\le \widetilde{C}a,
	\end{equation}
	with $\widetilde{C}>0$ depending only on $Q$ and $p$.
\end{cor}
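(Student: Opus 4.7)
My plan is to iterate Lemma \ref{lemma-second-alternative-dichotomy-improved} on the sequence of concentric intrinsic balls $\{B_{\rho^k}\}_{k\ge 0}$, where $\rho\in(0,1)$ is the radius provided by that lemma for a suitable $\alpha\in(0,\alpha_0)$. The goal is to construct inductively a sequence of constant horizontal sections $\{\widetilde{\bq}_k\}_{k\ge 0}$, with $\widetilde{\bq}_0 := \bq$, satisfying
\[
\left(\fint_{B_{\rho^k}}|\Hnabla u - \widetilde{\bq}_k|^p\,dx\right)^{1/p} \le (\rho^k)^{\alpha}\,\varepsilon\, a
\quad\text{and}\quad
|\widetilde{\bq}_k - \widetilde{\bq}_{k+1}| \le \widetilde{C}\,(\rho^k)^{\alpha}\,\varepsilon\, a,
\]
with $\widetilde{C}$ as in \eqref{stima-q-q-tilde}. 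The natural way to pass from scale $k$ to scale $k+1$ is to work with the intrinsically rescaled function $u_k(y) := \rho^{-k} u(\delta_{\rho^k}(y))$, which by the homogeneity of $J_p$ under the intrinsic dilations is an almost minimizer of $J_p$ on $B_1$ with exponent $\beta$ and rescaled constant $\kappa_k := \kappa\,\rho^{k\beta}$, and whose horizontal gradient satisfies $\Hnabla u_k(y) = \Hnabla u(\delta_{\rho^k}(y))$.

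At each step I would verify that $u_k$ satisfies the hypotheses of Lemma \ref{lemma-second-alternative-dichotomy-improved} on $B_1$ relative to the slope $\widetilde{\bq}_k$ and the smallness parameter $\varepsilon_k:=\rho^{k\alpha}\varepsilon$. Three verifications are needed: first, the rescaled average $a_k:=(\fint_{B_1}|\Hnabla u_k|^p\,dy)^{1/p}$ stays in a fixed compact interval, say $[a_0/2,2a_1]$, which follows from triangle inequality together with the control on $|\widetilde{\bq}_k|$ and the smallness of $\varepsilon_k$; second, $\widetilde{\bq}_k$ stays in the window \eqref{estimate-norm-q} relative to $a_k$, which propagates from the second displayed estimate after summing a geometric series; third, the rescaled almost-minimality constant must satisfy $\kappa_k\le c_0\,\varepsilon_k^P$, i.e.\ $\kappa\,\rho^{k\beta}\le c_0\,\rho^{kP\alpha}\,\varepsilon^P$. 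This last item is the main constraint and forces the choice $P\alpha\le\beta$ together with $\kappa_0\le c_0$, which is compatible with the hypotheses of the corollary. Once these checks are in place, Lemma \ref{lemma-second-alternative-dichotomy-improved} produces the next slope $\widetilde{\bq}_{k+1}$ and the inductive step closes.

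The second estimate above shows that $\{\widetilde{\bq}_k\}$ is Cauchy, with limit $\widetilde{\bq}_\infty$ satisfying $|\widetilde{\bq}_\infty - \bq|\le C\varepsilon a$. Interpolating between consecutive scales yields, for every $0<r\le 1$,
\[
\left(\fint_{B_r}|\Hnabla u - \widetilde{\bq}_\infty|^p\,dx\right)^{1/p} \le C\,r^{\alpha}\varepsilon a,
\]
which, through the equivalent Campanato seminorm \eqref{equivalent-Campanato-seminorm} and Theorem \ref{theo-isomorph-Campanato-spaces-Holder-spaces}, gives Hölder continuity of $\Hnabla u$ at the origin with exponent $\gamma=\alpha/Q$ and $\Hnabla u(e) = \widetilde{\bq}_\infty$. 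To upgrade this to an estimate on the whole $B_{1/2}$, I would rerun the iteration centered at an arbitrary $x_0\in B_{1/2}$, exploiting that (i) left translations on $\G$ preserve both the almost-minimality property and the horizontal gradient, (ii) the initial closeness hypothesis restricts from $B_1$ to $B_{1/2}(x_0)\subset B_1$ with only a factor $2^Q$ loss, and (iii) the slope $\bq$ and the average $a$ are unchanged. The uniform Hölder estimate on $B_{1/2}$, combined with $(u-\ell)(e)=0$ and the standard bound $|u(x)-\ell(x)|\le d_c(x,e)\,\|\Hnabla(u-\ell)\|_{L^\infty(B_{1/2})}$, establishes \eqref{first-conclusion-corollary}; the $L^\infty$ bound \eqref{second-conclusion-corollary} follows by triangle inequality from $|\Hnabla u|\le|\bq|+|\Hnabla(u-\ell)|\le C_0 a + C\varepsilon a$.

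The main obstacle I expect is the bookkeeping of constants across the iteration. Since $\rho,\varepsilon_0,c_0$ from Lemma \ref{lemma-second-alternative-dichotomy-improved} depend on the window $[a_0,a_1]$ in which the average lives, one must preemptively enlarge the window to, say, $[a_0/2,2a_1]$ and apply the lemma with respect to the enlarged window; $\varepsilon_0$ must then be chosen small enough (depending on $Q,p,\beta,a_0,a_1$) to guarantee that the cumulative drift $\sum_{j\ge 0}\widetilde{C}(\rho^\alpha)^j\varepsilon a$ never pushes $a_k$ or $|\widetilde{\bq}_k|$ out of the enlarged window at any stage. The only other delicate point is the compatibility condition $P\alpha\le\beta$, which dictates the admissible range of $\alpha$ and hence of the final Hölder exponent $\gamma$.
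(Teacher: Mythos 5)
Your proposal is correct and takes essentially the same approach as the paper: both iterate Lemma \ref{lemma-second-alternative-dichotomy-improved} on the dyadic scales $\rho^k$ with the same compatibility constraint $\alpha \le \beta/P$, track the drift of the slopes $\bq_k$ via a geometric series, and then pass through the Morrey--Campanato characterization of Theorem \ref{theo-isomorph-Campanato-spaces-Holder-spaces} to obtain the H\"older bound on $\Hnabla u - \bq$, which yields \eqref{first-conclusion-corollary} via a sub-unitary-curve (or CC-Lipschitz) argument and \eqref{second-conclusion-corollary} by the triangle inequality. The only noticeable difference is that you make the re-centering at an arbitrary $x_0\in B_{1/2}$ explicit in order to control the Campanato seminorm over all balls $B\subset B_{1/2}$, which is actually a more careful rendering of the same step (the paper's Step~2 argument is phrased for balls containing the origin and silently relies on this translation invariance).
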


\begin{remark}
	We point out that a consequence of \eqref{first-conclusion-corollary}
	in Corollary \ref{corollary-lemma-second-alternative-dichotomy-improved}
	is that, if $\e$ is sufficiently small, 
	\begin{equation}\label{rmk-coroll-hgrad-not-zero}
		\Hnabla u\neq \mathbf{0}\quad {\mbox{in }}B_{1/2},\end{equation}
	where $\mathbf{0}$ is the null horizontal section.
	Indeed, by \eqref{first-conclusion-corollary}, we have that, for all $x\in B_{1/2}$,
	\[\left|\Hnabla u(x)-\bq(x)\right| \le C\varepsilon a,
	\]
	which gives that
	\[
	\left|\Hnabla u(x)\right| \ge |\bq(x)|-\left|\Hnabla u(x)-\bq(x)\right| \ge \left|\bq\right|-C\varepsilon a.
	\]
	Finally, by \eqref{bound-q}, we get
	\[\left|\Hnabla u(x)\right| \ge \frac{a}{4}-C\varepsilon a>0,\]
	as soon as $\varepsilon$ is sufficiently small, which yields \eqref{rmk-coroll-hgrad-not-zero}.
	
	Furthermore, we can conclude that
	\begin{equation}\label{rmk-coroll-assurdo}
		u>0\quad {\mbox{in }} B_{1/2}.\end{equation}
	To check this, we suppose by contradiction that
	there exists a point $x_0\in B_{1/2}$ such that $u(x_0)=0$ (by assumption $u\ge0$). As a consequence, since $u\in C^{1,\gamma}(B_{1/2})$, we see that $\Hnabla u(x_0)=0$, and this contradicts \eqref{rmk-coroll-hgrad-not-zero}. This concludes the proof of \eqref{rmk-coroll-assurdo}.
\end{remark}


We also point out the following scaling property
of almost minimizers:

\begin{lem}\label{remark-rescaling-almost minimizer-1}
	Let $u$ be an almost minimizer for $J_p$ in $B_1$ with constant $\kappa$ and exponent $\beta$. 
	For any $r\in(0,1)$, let
	\begin{equation}\label{defin-rescaling}
		u_r(x):=\frac{u(\delta_r(x))}{r}.
	\end{equation} 
	Then, $u_r$ is			
	an almost minimizer for $J_p$ in $B_{1/r}$ with constant $\kappa r^\beta$ and exponent $\beta$, namely
	\begin{equation}\label{THdefin-rescaling} J_p(u_r,B_\varrho(x_0))\leq(1+\kappa r^\beta \varrho^\beta)J_p(v,B_\varrho(x_0)),
	\end{equation}
	for every ball $B_\varrho(x_0)$ such that $\overline{B_\varrho(x_0)}\subset B_{1/r}$ and for every $v\in HW^{1,p}(B_\varrho(x_0))$ such that $u_r-v \in HW^{1,p}_0(B_\varrho(x_0))$.
\end{lem}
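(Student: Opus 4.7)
The plan is to transfer the almost-minimality of $u$ on $B_1$ to $u_r$ on $B_{1/r}$ by a change of variables governed by the intrinsic dilation $\delta_r$. The three main ingredients are: the scaling of the horizontal gradient ($X_j f \circ \delta_r = r^{-1} X_j(f \circ \delta_r)$ for $j=1,\dots,m_1$, since $X_j$ is $\delta_r$-homogeneous of degree one), the behavior of the Lebesgue measure under dilations ($d(\delta_r(x)) = r^Q\, dx$), and the fact that $\delta_r$ maps metric balls to metric balls, namely $\delta_r(B_\varrho(x_0)) = B_{r\varrho}(\delta_r(x_0))$ thanks to \eqref{gauge2} (or equivalently the $1$-homogeneity of $d_c$).

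First, I would compute $J_p(u_r, B_\varrho(x_0))$ in terms of $J_p(u, \cdot)$. Since $u_r(x) = r^{-1} u(\delta_r(x))$, the scaling of the left-invariant horizontal vector fields gives $X_j u_r(x) = (X_j u)(\delta_r(x))$, whence $|\Hnabla u_r(x)| = |\Hnabla u(\delta_r(x))|$; also $\chi_{\{u_r>0\}}(x) = \chi_{\{u>0\}}(\delta_r(x))$. Performing the substitution $y = \delta_r(x)$ and using $\delta_r(B_\varrho(x_0)) = B_{r\varrho}(\delta_r(x_0))$, one gets
\begin{equation*}
J_p(u_r, B_\varrho(x_0)) \,=\, r^{-Q}\, J_p(u, B_{r\varrho}(\delta_r(x_0))).
\end{equation*}
Note that, because $\overline{B_\varrho(x_0)}\subset B_{1/r}$, applying $\delta_r$ yields $\overline{B_{r\varrho}(\delta_r(x_0))}\subset B_1$, so the almost-minimality of $u$ is applicable on $B_{r\varrho}(\delta_r(x_0))$.

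Next, I would lift a competitor $v$ for $u_r$ on $B_\varrho(x_0)$ to a competitor $\tilde v$ for $u$ on $B_{r\varrho}(\delta_r(x_0))$ by the inverse rescaling
\begin{equation*}
\tilde v(y) \,:=\, r\, v\bigl(\delta_{1/r}(y)\bigr), \qquad y\in B_{r\varrho}(\delta_r(x_0)).
\end{equation*}
By the same scaling computations one checks that $u - \tilde v \in HW^{1,p}_0(B_{r\varrho}(\delta_r(x_0)))$ (this follows from $u(\delta_r(x)) - \tilde v(\delta_r(x)) = r(u_r(x) - v(x))$ and from the fact that dilations preserve the zero-trace class on matching balls), and that
\begin{equation*}
J_p(\tilde v, B_{r\varrho}(\delta_r(x_0))) \,=\, r^{Q}\, J_p(v, B_\varrho(x_0)).
\end{equation*}

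Finally, applying the almost-minimality of $u$ on the ball $B_{r\varrho}(\delta_r(x_0)) \subset B_1$ (with radius $r\varrho$ inserted in the factor $1+\kappa(\cdot)^\beta$) and chaining the two identities above delivers
\begin{equation*}
J_p(u_r, B_\varrho(x_0)) \,\le\, r^{-Q}\bigl(1+\kappa (r\varrho)^\beta\bigr)\, J_p(\tilde v, B_{r\varrho}(\delta_r(x_0))) \,=\, \bigl(1+(\kappa r^\beta)\varrho^\beta\bigr) J_p(v, B_\varrho(x_0)),
\end{equation*}
which is exactly \eqref{THdefin-rescaling}. There is no serious obstacle here: everything reduces to bookkeeping with the dilation, and the only point that requires a line of justification is the degree-one scaling of the horizontal vector fields under $\delta_r$, which is immediate from the stratification condition \eqref{stratificazione} and the explicit form of $X_j$ given in Proposition \ref{campi-omogenei0}.
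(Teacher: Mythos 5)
Your proof is correct and follows essentially the same route as the paper's: you lift the competitor $v$ to $\tilde v(y)=r\,v(\delta_{1/r}(y))$ (the paper's $w$), use the degree-one scaling of horizontal vector fields and the Jacobian $r^Q$ of intrinsic dilations to establish $J_p(u_r,B_\varrho(x_0))=r^{-Q}J_p(u,B_{r\varrho}(\delta_r(x_0)))$ and $J_p(\tilde v,B_{r\varrho}(\delta_r(x_0)))=r^Q J_p(v,B_\varrho(x_0))$, and plug into the almost-minimality of $u$ on the ball $B_{r\varrho}(\delta_r(x_0))\subset B_1$. The only minor presentational difference is that the paper verifies the boundary condition $u-\tilde v\in HW^{1,p}_0$ by checking equality on $\partial B_{r\varrho}(y_0)$ rather than by invoking preservation of the zero-trace class under dilations, but this is cosmetic.
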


\begin{proof} 
	By definition of almost minimizers, we know that
	\begin{equation}\label{condition-almost minimality}
		J_p(u,B_\vartheta(y_0))\leq(1+\kappa \vartheta^\beta)J_p(w,B_\vartheta(y_0))
	\end{equation}
	for every ball $B_\vartheta(y_0)$ such that $\overline{B_\vartheta(y_0)}\subset B_{1/r}$ and for every $w\in HW^{1,p}(B_\vartheta(y_0))$ such that $u-w \in HW^{1,p}_0(B_\vartheta(y_0))$.
	
	Now, given $x_0\in B_{1/r}$, we take $\varrho$ and $v$ as in the statement of Lemma \ref{remark-rescaling-almost minimizer-1} and we define
	\begin{equation}\label{defin-v_r}
		w(x):= rv\left(\delta_{1/r}(x)\right).
	\end{equation}	
	Then, using the notation $y_0:=\delta_r( x_0)$ and $\vartheta:=r\varrho$,
	for all $x\in\partial B_\vartheta(y_0)$, we have that $\delta_{1/r}(x)\in\partial B_\varrho(x_0)$ and therefore, 
	\begin{equation*}
		w(x)-u(x)=rv\left(\delta_{1/r}(x)\right)-u(x) =ru_r\left(\delta_{1/r}(x)\right)-u(x)=0.
	\end{equation*}
	Accordingly, we can use $w$ as a competitor for $u$ in \eqref{condition-almost minimality},
	thus obtaining that
	\begin{equation}\label{00PPJnd2}
		\int_{B_{r\varrho}(y_0)}\Big(\left|\Hnabla u(y)\right|^p+\chi_{\left\{u>0\right\}}(y)\Big)\,dy
		\leq(1+\kappa r^\beta\varrho^\beta)
		\int_{B_{r\varrho}(y_0)}\Big(\left|\Hnabla w(y)\right|^p+\chi_{\left\{w>0\right\}}(y)\Big)\,dy.
	\end{equation}			
	
	Furthermore, using, consistently with \eqref{defin-rescaling}, the notation $w_r(x):=\frac{w(\delta_r(x))}{r}$, with the change of variable $x:=\delta_{1/r}(y)$ we see that
	\begin{equation}\label{00PPJnd}\begin{split}&
			\int_{B_{r\varrho}(y_0)}\Big(\left|\Hnabla w(y)\right|^p+\chi_{\left\{w>0\right\}}(y)\Big)\,dy= {r^Q}
			\int_{B_{\varrho}(x_0)}\Big(\left|\Hnabla w(\delta_r(x))\right|^p+\chi_{\left\{w>0\right\}}(\delta_r(x))\Big)\,dx\\
			&\qquad \qquad=r^Q\int_{B_{\varrho}(x_0)}\Big(\left|\Hnabla w_r(x)\right|^p+\chi_{\left\{w_r>0\right\}}(x)\Big)\,dx
	\end{split}\end{equation}
	and a similar identity holds true with $u$ and $u_r$ replacing $w$ and $w_r$.
	
	Also, recalling \eqref{defin-v_r},
	we observe that $v=w_r$. Plugging this information and \eqref{00PPJnd}
	into \eqref{00PPJnd2}, we obtain the desired result in \eqref{THdefin-rescaling}.\end{proof}

\begin{proof}[Proof of Corollary \ref{corollary-lemma-second-alternative-dichotomy-improved}]
	Thanks to Lemma \ref{remark-rescaling-almost minimizer-1}, up to scaling, without loss of generality, we can suppose that
	\begin{equation}\label{riscalamento}
		{\mbox{$u$ is an almost minimizer for $J_p$ in $B_2$ (with constant $\kappa$ and exponent $\beta$).}}
	\end{equation}
	Now, we divide the proof of Corollary \ref{corollary-lemma-second-alternative-dichotomy-improved} into separate steps.
	
	\medskip
	\noindent{\em Step 1: iteration of Lemma \ref{lemma-second-alternative-dichotomy-improved}.}
	We prove that we can iterate Lemma \ref{lemma-second-alternative-dichotomy-improved} indefinitely with $\alpha:=\min\left\{\frac{\alpha_0}2,\frac{\beta}{P}\right\}$, with $P:=\max\{p,2\}$ and $\alpha_0$ is the one given in Lemma\ref{lemma-second-alternative-dichotomy-improved}. More precisely, we claim that, for all $ k\ge 0$, there exists a constant horizontal section $\bq_k:\G \to H\G$ \begin{equation}\label{def-qk}
		\bq_k(x):=\sum_{j=1}^{m_1}q_{k,j}X_j(x), \qquad \text{for some }\ q_k=:(q_{k,1},\ldots,q_{k,m_1})\in\R^{m_1}
	\end{equation}
	such that
	\begin{equation}\label{bounds-qk}\begin{split}&
			|\bq_k|\in\left[ \frac{a}4-\frac{\widetilde C\e a\left( 1-\rho^{k\alpha}\right)}{1-\rho^{\alpha}}, \,C_0a+\frac{\widetilde C\e a\left( 1-\rho^{k\alpha}\right)}{1-\rho^{\alpha}}\right]
			,\quad \text{for all }x\in\G\\&
			\left(\fint_{B_{\rho^k}}\left|\Hnabla u(x)-\bq_{k}(x)\right|^p\,dx\right)^{1/p}\le\rho^{{k\alpha}}\varepsilon a\\{\mbox{and }}\quad&
			\left(\fint_{B_{\rho^k}}\left|\Hnabla u(x)\right|^p\,dx\right)^{1/p}\in
			\left[ \frac{|\bq|}2,2|\bq|\right],
		\end{split}
	\end{equation} 
	where $\rho\in(0,1)$, $C_0>0$ and $\widetilde C>0$ are universal contants.
	
	To prove this, we argue by induction. When $k=0$, we pick $\bq_0:=\bq$. Then, in this case, the desired claims in \eqref{bounds-qk} follow from \eqref{bound-q} and \eqref{second-alternative-dichotomy}.
	
	Now we perform the inductive step by assuming that \eqref{bounds-qk} is satisfied for $k$ and we establish the claim for $k+1$. Setting $r:=\rho^k$, and $u_r(\cdot):=\frac{u(\delta_r(\cdot))}{r}$, by inductive assumption we have
	$$ \left(\fint_{B_{1}}\left|\Hnabla u_r(x)-\bq_{k}(x)\right|^p\,dx\right)^{1/p}\le r^{\alpha}\varepsilon a=\varepsilon_k a,$$
	with $\varepsilon_k:=r^{\alpha}\varepsilon=\rho^{k\alpha}\varepsilon$.
	
	Notice that the inductive assumption also yields \eqref{estimate-norm-q}, as soon as $\varepsilon$ is chosen conveniently
	small. Therefore, thanks to Lemma \ref{remark-rescaling-almost minimizer-1} as well,
	we are in position of using Lemma \ref{lemma-second-alternative-dichotomy-improved}
	on the function $u_r$ with $\sigma:=\kappa r^\beta$.
	We stress that the structural condition $\sigma\le c_0\varepsilon^P$ in Lemma \ref{lemma-second-alternative-dichotomy-improved} translates here into $\kappa\le c_0\varepsilon^P$, which is precisely the requirement in the statement of Corollary \ref{corollary-lemma-second-alternative-dichotomy-improved} (by taking $\kappa_0$ there less than or equal to $c_0$).
	In this way, we deduce from \eqref{tesi-lemma-improv} and \eqref{stima-q-q-tilde}
	that there exists a constant horizontal section $\bq_{k+1}$ such that
	\begin{equation}\label{uso-lemma-imp}\begin{split}
			\left(\fint_{B_\rho}\left|\Hnabla u_r(x)-\bq_{k+1}(x)\right|^p\,dx\right)^{1/p}\le\rho^{\alpha}\varepsilon_k a\qquad{\mbox{and}}\qquad
			\left|\bq_k-{\bq}_{k+1}\right|\le \widetilde{C}\varepsilon_k a.	
	\end{split}\end{equation}
	On the other hand, since $\Hnabla\left(u(\delta_r)\right)=r\Hnabla u(\delta_r(x))$, scaling back, we find that
	$$ \left(\fint_{B_{\rho^{k+1}}}\left|\Hnabla u(x)-\bq_{k+1}(x)\right|^p\,dx\right)^{1/p}\le\rho^\alpha\rho^{{k\alpha}}\varepsilon a=
	\rho^{{(k+1)\alpha}}\varepsilon a.$$
	Moreover, using again inductive assumption and \eqref{uso-lemma-imp}, we have
	\begin{eqnarray*}&& |\bq_{k+1}|\le |\bq_k-\bq_{k+1}|+|\bq_k|\le\widetilde{C}\varepsilon_k a+
		C_0a+\frac{\widetilde C\e a\left( 1-\rho^{k\alpha}\right)}{1-\rho^{\alpha}}\\&&\qquad
		=C_0a+\frac{\widetilde C\e a\left( 1-\rho^{k\alpha}\right)}{1-\rho^{\alpha}}+\widetilde{C}\rho^{k\alpha}\varepsilon a=
		C_0a+\frac{\widetilde C\e a\left( 1-\rho^{(k+1)\alpha}\right)}{1-\rho^{\alpha}}
	\end{eqnarray*}
	and
	\begin{eqnarray*}&& |\bq_{k+1}|\ge |\bq_k|-|\bq_k-\bq_{k+1}|\ge
		\frac{a}4-\frac{\widetilde C\e a\left( 1-\rho^{k\alpha}\right)}{1-\rho^{\alpha}}-
		\widetilde{C}\varepsilon_k a\\&&\qquad
		=\frac{a}4-\frac{\widetilde C\e a\left( 1-\rho^{k\alpha}\right)}{1-\rho^{\alpha}}-
		\widetilde{C}\rho^{k\alpha}\varepsilon  a=\frac{a}4-\frac{\widetilde C\e a\left( 1-\rho^{(k+1)\alpha}\right)}{1-\rho^{\alpha}}.\end{eqnarray*}
	In addition,
	\begin{eqnarray*}&&
		\left|\left(\fint_{B_{\rho^{k+1}}}\left|\Hnabla u(x)\right|^p\,dx\right)^{1/p}-|\bq_{k+1}|\right|=
		\frac{1}{|B_{\rho^{k+1}}|^{1/p}}
		\left|\left(\int_{B_{\rho^{k+1}}}\left|\Hnabla u(x)\right|^p\,dx\right)^{1/p}-|\bq_{k+1}||B_{\rho^{k+1}}|^{1/p}\right|\\&&\qquad=
		\frac{1}{|B_{\rho^{k+1}}|^{1/p}}
		\left| \left\|\Hnabla u\right\|_{L^p(B_{\rho^{k+1}})}-\left\|\bq_{k+1}\right\|_{L^p(B_{\rho^{k+1}})}\right|\le
		\frac{1}{|B_{\rho^{k+1}}|^{1/p}}
		\left\|\Hnabla u-\bq_{k+1}\right\|_{L^p(B_{\rho^{k+1}})}\\
		&&\qquad=\left(\fint_{B_{\rho^{k+1}}}\left|\Hnabla u(x)-\bq_{k+1}(x)\right|^p\,dx\right)^{1/p}\le\e a,
	\end{eqnarray*}
	which yields that
	\begin{eqnarray*}&&
		\left|\left(\fint_{B_{\rho^{k+1}}}\left|\Hnabla u(x)\right|^p\,dx\right)^{1/p}-|\bq|\right|=
		\left|\left(\fint_{B_{\rho^{k+1}}}\left|\Hnabla u(x)\right|^p\,dx\right)^{1/p}-|\bq_0|\right|\\&&\qquad\le
		\left|\left(\fint_{B_{\rho^{k+1}}}\left|\Hnabla u(x)\right|^p\,dx\right)^{1/p}-|\bq_{k+1}|\right|+\sum_{j=0}^k\left||\bq_{j+1}|-|\bq_j|\right|\\&&\qquad\le
		\e a+\widetilde{C}a\sum_{j=0}^k\varepsilon_j
		\le \e a+\widetilde{C}\varepsilon a\sum_{j=0}^{+\infty}\rho^{j\alpha}=
		\left(1+\frac{\widetilde{C}}{1-\rho^\alpha}\right)\e a\\&&\qquad\le
		\left(1+\frac{\widetilde{C}}{1-\rho^\alpha}\right)\e |\bq|\le\frac{|\bq|}p.
	\end{eqnarray*}
	These observations conclude the proof of the inductive step and establish \eqref{bounds-qk}.\medskip
	
	
	\noindent{\em Step 2: Morrey-Campanato estimates.}
	We now want to exploit the Morrey-Campanato estimate of Theorem \ref{theo-isomorph-Campanato-spaces-Holder-spaces}
	here applied to the function $\Hnabla u-\bq$,
	with the following choices $$
 \Omega=B_{1/2},\quad \text{and} \quad \lambda=\alpha/Q.$$ In order to do this, we claim that, for every $B\subset B_{1/2}$,
	\begin{equation}\label{primo-claim-campanato}
		\frac{1}{|B|^{1+2\lambda}}\inf_{\boldsymbol{\xi}}\int_{B}\left|\Hnabla u(x)-\bq(x)-\boldsymbol{\xi}(x)\right|^2\,dx\,\le C\e^2 a^2,
	\end{equation}
	up to renaming $C>0$, where the infimum is taken over all the constant horizontal sections $\bxi:\G\to H\G$ as in \eqref{b-xi}.
	
	To prove the claim \eqref{primo-claim-campanato}, we distinguish two cases: either $|B|\ge1$ or $|B|\in(0,1)$.
	
	If $|B|\ge1$, we use \eqref{second-alternative-dichotomy}
	and we get that
	\begin{eqnarray*}&&
		|B|^{-(1+p\lambda)}\inf_{\bxi}\int_{B}\left|\Hnabla u(x)-\bq(x)-\bxi(x)\right|^p\,dx\le
		\int_{B_{1/p}}\left|\Hnabla u(x)-\bq(x)\right|^p\,dx
		\\
		&&\qquad\qquad\qquad\qquad\le|B_1|\fint_{B_{1}}\left|\Hnabla u(x)-\bq(x)\right|^p\,dx
		\le |B_1|\,\varepsilon^p a^p,
	\end{eqnarray*}
	which gives \eqref{primo-claim-campanato} in the case of $|B|\geq 1$.
	
	If instead $|B|\in(0,1)$, let be $k_0=k_0(B)\in\N$ such that $B\subseteq B_{\rho^{k_0}}$ and $ B_{\rho^{k}}\subseteq B$ for all $k>k_0$. By \eqref{bounds-qk} we have that
	\begin{eqnarray*}&&
		|B|^{-(1+p\lambda)}\inf_{\bxi}\int_{B}\left|\Hnabla u(x)-\bq(x)-\bxi(x)\right|^p\,dx\le
		|B_{\rho^{k_0+1}}|^{-(1+p\lambda)}\int_{B_{\rho^{k_0}}}\left|\Hnabla u(x)-\bq_{k_0}(x)\right|^p\,dx\\&&\qquad\qquad\qquad\qquad=
		|B_1|\,\rho^{-Q(k_0+1)(1+p\lambda)}\,|B_{\rho^{k_0}}|
		\fint_{B_{\rho^{k_0}}}\left|\Hnabla u(x)-\bq_{k_0}(x)\right|^p\,dx\\&&\qquad\qquad\qquad\qquad
		\leq C|B_1|\,\rho^{-Q(k_0+1)(1+p\lambda)+Qk_0+pk_0\alpha }\,\varepsilon^p a^p\\&&\qquad\qquad\qquad\qquad= C|B_1|\,\rho^{(-Q-p\alpha)}\,\varepsilon^p a^p,
	\end{eqnarray*}
	which gives \eqref{primo-claim-campanato} up to renaming $C$. The proof of \eqref{primo-claim-campanato}
	is thereby complete.\medskip
	
	\noindent{\em Step 4: conclusion of the proof.}
	Since, by \eqref{second-alternative-dichotomy}
	$$ \left\|\Hnabla u-\bq\right\|_{L^p(B_{1/2})}\le C\varepsilon a,$$
	and by \eqref{equivalent-Campanato-seminorm} and \eqref{primo-claim-campanato}, we have
	$$ [\Hnabla u-\bq]_{\mathcal{E}^{p,\lambda}(B_{1/2},H\G)}\le C\varepsilon a,$$
	up to renaming $C$, we can apply the Theorem \ref{theo-isomorph-Campanato-spaces-Holder-spaces}, from which it follows that
	\begin{equation}\label{stiama-holder-hgradu-q}
		\left[\Hnabla u-\bq\right]_{C^{0,\lambda}(B_{1/2},H\G)}\le
		C\varepsilon a,\end{equation} up to renaming constant $C$ with $\lambda=\alpha/Q\in(0,1)$.
	
	Now we define the $\G$-affine function $\ell:\G\to \R$ as $\ell(x):=u(e)+\left \langle \bq(x),\pi_x(x) \right \rangle$, for $x\in\G$. By Proposition \ref{campi-omogenei0}, we have $\Hnabla\ell=\bq$. For all $x\in B_{1/2}$, let $\delta_x:= d_c(0,x)$ and $\gamma_x\colon[0,\delta_x]\to B_{1/2}$ a sub-unitary curve 
	\[
	\gamma_x(0)=e,\quad \gamma_x (\delta_x)=x\quad \text{ and }\quad \dot \gamma_x(t)=\sum_{i=1}^{m_1}h_i(t)X_i(\gamma_x(t))\quad \text{ for a.e.\ $t\in [0,\delta_x]$},
	\]
	with $\sum_{j=1}^{m_1}(h_j(t))^2\leq 1$, for a.e.\ $t\in [0,\delta_x]$, (such $\gamma$ exists thanks to Chow's Theorem, \cite{BLU}*{Theorem 19.1.3}). By \eqref{stiama-holder-hgradu-q} we conclude
	\begin{align*}
		|u(x)- \ell(x)|&=\left |u(x)-u(e)-\left \langle \bq(x),\pi_x(x)\right\rangle-\left\langle\bq(e),\pi_e(e) \right \rangle\right|\\
		&=\left|\int_0^{\delta_x} \frac{d}{dt}\left(u(\gamma_x(t))-\left \langle \bq(\gamma_x(t)),\pi_{\gamma_x(t)}(\gamma_x(t)) \right \rangle\right)\,dt\right | \\
		&\leq C\int_0^{\delta_x}|\Hnabla u(\gamma_x(t))-\bq(\gamma_x(t))|\,dt \leq C\varepsilon a,
	\end{align*}
	up to renaming $C>0$ a universal constant. By arbitrariness of $x\in B_{1/2}$, we conclude $$\|u-\ell\|_{L^\infty(B_{1/2})}\le C\varepsilon a.$$ This and \eqref{stiama-holder-hgradu-q} establish \eqref{first-conclusion-corollary}.
\end{proof}

\section{Lipschitz continuity of almost minimizers and proof of Theorem \ref{theor-Lipsch-contin-alm-minim}}\label{sec:lip1}

We are now in position of establishing the Lipschitz regularity result in Theorem \ref{theor-Lipsch-contin-alm-minim}. 

\begin{proof}[Proof of Theorem \ref{theor-Lipsch-contin-alm-minim}]
	In the light of Lemma \ref{remark-rescaling-almost minimizer-1}, up to a rescaling,
	we can assume that $u$ is an almost minimizer with constant \begin{equation}\label{1-e2LS}
		\widetilde{\kappa}:=\kappa s^{\beta},\end{equation} which can be made arbitrarily small by an appropriate choice of $s>0$.
	
    Let us set $P:=\max\{p,2\}$. Let also $\alpha_0\in(0,1]$ be the structural constant given by Lemma \ref{lemma-second-alternative-dichotomy-improved} and define
	$$ \alpha:=\frac12\min\left\{\alpha_0,\frac\beta{P}\right\}.$$
	We also consider $\e_0$ as given by Proposition \ref{dic} and take $\eta\in(0,1)$ and $M\ge1$ as in Proposition \ref{dic} (corresponding here to choice $\varepsilon:=\varepsilon_0/2$).
	
	Let us define 
	\begin{equation}\label{def-a-tau}
		a(\tau):=\bigg(\fint_{B_{\tau}}\left|\Hnabla   u(x)\right|^p\,dx\bigg)^{1/p}.
	\end{equation}
	We now divide the argument into separate steps.			\medskip
	
	\noindent{\em Step 1: estimating the average.}
	We claim that, for every $r\in(0,\eta]$,
	\begin{equation}\label{ineq-a(r)-r<1}
		a(r)\le C(M,\eta)(1+a(1)),
	\end{equation}
	for some $C(M,\eta)>0$, possibly depending on $Q$ and $p$ as well.

	To prove this, we consider the set ${\mathcal{K}}\subseteq\N=\{0,1,2,\dots\}$ containing all the integers $k\in\N$ such that
	\begin{equation}\label{ineq-a(eta^k)}
		a(\eta^k)\le C(\eta)M+2^{-k}a(1),
	\end{equation}
	where
	\begin{equation}\label{def-C-eta}
		C(\eta):= 2\eta^{-Q/p} . \end{equation}
	We stress that for $k=0$ formula \eqref{ineq-a(eta^k)} is clearly true, hence
	\begin{equation}\label{k-not-empty}
		0\in{\mathcal{K}}\ne\varnothing.\end{equation}
	We then distinguish two cases, namely whether \eqref{ineq-a(eta^k)} holds for every $k$ (i.e. ${\mathcal{K}}=\N$) or not (i.e. ${\mathcal{K}}\subsetneqq\N$). 
	\medskip
	
	\noindent{\em Step 1.1: the case $\mathcal{K}=\N$}. For every $r\in(0,\eta],$ we define $k_0\in\N={\mathcal{K}}$ such that $\eta^{k_0+1}<r\le \eta^{k_0}.$ Hence, according to \eqref{def-a-tau} and \eqref{ineq-a(eta^k)}, we get that
	\begin{equation*}\begin{split}
			&a(r)\le \bigg(\frac{1}{|B_1|\,\eta^{(k_0+1)Q} }\int_{B_{\eta^{k_0}}}\left|\Hnabla u(x)\right|^p\,dx\bigg)^{1/p}=\eta^{-Q/p}a(\eta^{k_0})\le \eta^{-Q/p}( C(\eta)M+a(1))\\
			&\qquad\qquad\le \eta^{-Q/p}\max\left\{ C(\eta)M,1\right\}(1+a(1))\le C(M,\eta)(1+a(1)),
	\end{split}\end{equation*}
	provided that $C(M,\eta)
	\ge\eta^{-Q/p}\max\left\{ C(\eta)M,1\right\}$.
	The proof of \eqref{ineq-a(r)-r<1} is thereby complete in this case.
	\medskip
	
	\noindent{\em Step 1.2: the case ${\mathcal{K}}\subsetneqq\N$}			
	By \eqref{k-not-empty}, there exists $k_0\in\N$ such that $\{0,\dots,k_0\}\in{\mathcal{K}}$
	and
	\begin{equation}\label{prop-k0+1}
		k_0+1\not\in{\mathcal{K}}.\end{equation} We notice that, by \eqref{def-a-tau}
	and \eqref{def-C-eta},
	$$\eta^{-Q/p}M< C(\eta)M\le C(\eta)M+2^{-(k_0+1)}a(1)<
	a(\eta^{k_0+1})\le\eta^{-Q/p}a(\eta^{k_0})$$
	and therefore
	\begin{equation}\label{DALS:iskcd0}a(\eta^{k_0})> M.
	\end{equation}
	Furthermore, from \eqref{prop-k0+1},
	\begin{equation}\label{DALS:iskcd}
		a(\eta^{k_0+1})>C(\eta)M+2^{-(k_0+1)}a(1)\ge\frac{C(\eta)M+2^{-k_0}a(1)}{2}\ge \frac{a(\eta^{k_0})}2.
	\end{equation}
	
	Now, we claim that
	\begin{equation}\label{second-alternative-dichotomy-B_eta^k}
		\left(\fint_{B_{\eta^{k_0+1}}}\left|\Hnabla u(x)-\bq(x)\right|^p \,dx\right)^{1/p}\le \varepsilon a(\eta^{k_0}),
	\end{equation}
	for some constant horizontal section $\bq:\G\to H\G$ such that $$\frac{a(\eta^{k_0})}{4}<\left|\bq\right|\le C_0a(\eta^{k_0})$$ and being $C_0$ the constant given by Proposition \ref{dic}.
	
	To prove \eqref{second-alternative-dichotomy-B_eta^k}, we apply the dichotomy result in Proposition \ref{dic} rescaled in the ball $B_{\eta^{k_0}}$.
	In this way, we deduce from \eqref{alt1}, \eqref{alt2} and \eqref{bound-q} that \eqref{second-alternative-dichotomy-B_eta^k} holds true, unless it occurs \[ a(\eta^{k_0+1})=\left(\fint_{B_{\eta^{k_0+1}}}\left|\Hnabla u(x)\right|^p \,dx\right)^{1/p}\le \frac{ a(\eta^{k_0}) }2, \]
	but this contradicts \eqref{DALS:iskcd}.
	This ends the proof of \eqref{second-alternative-dichotomy-B_eta^k}.
	
	Now we apply Corollary \ref{corollary-lemma-second-alternative-dichotomy-improved} rescaled in the ball $B_{\eta^{k_0+1}}$: namely, we use here Corollary \ref{corollary-lemma-second-alternative-dichotomy-improved} with $B_1$ replaced by $B_{\eta^{k_0+1}}$ and $a$ replaced by $a(\eta^{k_0+1})$. 
	To this end, we need to verify that the assumptions of Corollary \ref{corollary-lemma-second-alternative-dichotomy-improved} are fulfilled in this rescaled situation. Specifically, we note that, by \eqref{DALS:iskcd0} and \eqref{DALS:iskcd},
	$$ a(\eta^{k_0+1})\ge\frac{M}{2}.$$
	
	Also, since $k_0\in{\mathcal{K}}$,
	$$ a(\eta^{k_0+1})\le\eta^{-Q/p}a(\eta^{k_0})\le \eta^{-Q/p}\big(C(\eta)M+2^{-k_0}a(1)
	\big)\le \eta^{-Q/p}\big(C(\eta)M+a(1)\big).$$
	These observations give that \eqref{A0a1} is satisfied in this rescaled setting with
	\begin{equation}\label{speriamobene}
		a_0:=\frac{M}2\qquad {\mbox{ and }}\qquad a_1:=\eta^{-Q/p}\big(C(\eta)M+a(1)\big).\end{equation}
	
	Moreover, by \eqref{DALS:iskcd} and \eqref{second-alternative-dichotomy-B_eta^k},
	$$\left(\fint_{B_{\eta^{k_0+1}}}\left|\Hnabla u(x)-\bq(x)\right|^p \,dx\right)^{1/p}\le 2\varepsilon a(\eta^{k_0+1}),
	$$ showing that \eqref{second-alternative-dichotomy} is satisfied (here with $2\varepsilon$ instead of $\varepsilon$).
	
	We also deduce from \eqref{DALS:iskcd} and \eqref{second-alternative-dichotomy-B_eta^k} that
	$$			\frac{\eta^{Q/p}a(\eta^{k_0+1})}{4}\le\frac{a(\eta^{k_0})}{4}<\left|\bq\right|\le C_0a(\eta^{k_0})\le2C_0a(\eta^{k_0+1}),
	$$
	which gives that \eqref{bound-q} is satisfied here (though with different structural constants).
	
	We can thereby exploit Corollary \ref{corollary-lemma-second-alternative-dichotomy-improved}
	in a rescaled version
	to obtain the existence of $\e_0$ (possibly different from the one given by Proposition \ref{dic})
	and $\kappa_0$, depending on $Q$, $p$, $\beta$, $a_0$ and $a_1$,
	such that, if
	\begin{equation}\label{condaggforseokroetu}
		\widetilde\kappa\in(0,\kappa_0\e_0^P],\end{equation} 
	we have that
	\begin{equation}\label{swqevu3576435432647328528765uhgfhdf}
		\left\|\Hnabla u\right\|_{L^{\infty} (B_{\eta^{k_0+1}/2} )}\le \bar{C}a(\eta^{k_0}),
	\end{equation}
	for some structural constant $\bar{C}>0$.
	
	We remark that condition \eqref{condaggforseokroetu} is satisfied by taking $s$ in \eqref{1-e2LS}
	sufficiently small, namely taking $s:= \left(\frac{\kappa_0 \e_0^P}{2\kappa}\right)^{\frac1{\beta}}$.
	Notice in particular that
	\begin{equation}\label{noticeinpartlsworeteuyti8787686}
		{\mbox{$s$ depends on $Q$, $p$, $\kappa$,
				$\beta$ and $\|\Hnabla u\|_{L^p(B_1)}$,}}\end{equation} due to \eqref{speriamobene}.	
	
	As a result of \eqref{swqevu3576435432647328528765uhgfhdf}, for all $r\in\left(0,\frac{\eta^{k_0+1}}2\right)$,
	\begin{equation}\label{ineq-a(r)-r-le-eta^k_0/2}\begin{split}&
			a(r)=\left( \frac1{|B_r|}\int_{B_r}|\Hnabla u(x)|^p\,dx\right)^{1/p}\le \bar{C}a(\eta^{k_0})\le
			\bar{C}\big( C(\eta)M+2^{-k_0}a(1)\big)\\&\qquad\qquad\le\bar{C}\big( C(\eta)M+a(1)\big)\le C(M,\eta)(1+a(1)),
	\end{split}\end{equation}
	as long as $C(M,\eta)\ge\bar{C}(C(\eta)M+1)$.
	
	Moreover, if $r\in\left[\frac{\eta^{k_0+1}}2,\eta\right]$ then we take $k_r\in\N$ such that $\eta^{k_r+1}<r\le\eta^{k_r}$.
	Note that
	$$\frac1{\eta^{k_r}}\le\frac1r\le\frac{2}{\eta^{k_0+1}},$$
	whence
	\begin{equation}\label{L23tSx32}
		k_r \le k_0+C_\star,
	\end{equation}
	where $C_\star:=1+\frac{\log2}{\log(1/\eta)}$.
	
	We now distinguish two cases: if $k_r\in\{0,\dots,k_0\}$ then $k_r\in{\mathcal{K}}$ and therefore
	\begin{equation*}
		a(\eta^{k_r})\le C(\eta)M+2^{-k_r}a(1).
	\end{equation*}
	From this, we obtain that
	\begin{equation}\label{kpd012}
		\begin{split}&
			a(r)\le\left( \frac1{|B_{\eta^{k_r+1}}|}\int_{B_{\eta^{k_r}}}|\Hnabla u(x)|^p\,dx\right)^{1/p}
			=\eta^{-Q/p}a(\eta^{k_r})\\&\qquad\qquad\le\eta^{-Q/p}\big( C(\eta)M+2^{-k_r}a(1)\big)\le C(M,\eta)(1+a(1)).\end{split}
	\end{equation}
	If instead $k_r>k_0$, we employ \eqref{L23tSx32} to see that
	\begin{eqnarray*}&&
		a(r)\le\left( \frac1{|B_{\eta^{k_0+C_\star+1}}|}\int_{B_{\eta^{k_0}}}|\Hnabla u(x)|^p\,dx\right)^{1/p}
		=\eta^{-Q(C_\star+1)/p} a(\eta^{k_0})\\ &&\qquad\qquad\quad\le
		\eta^{-Q(C_\star+1)/p}\big( C(\eta)M+2^{-k_0}a(1)\big)\le\eta^{-Q(C_\star+1)/p}\big( C(\eta)M+a(1)\big)\\&&\quad\qquad\qquad\le
		C(M,\eta)(1+a(1)),
	\end{eqnarray*}
	as long as $C(M,\eta)$ is chosen large enough.
	
	This and \eqref{kpd012} give that for all $r\in\left[\frac{\eta^{k_0+1}}2,\eta\right]$,
	we have that $a(r)\le C(M,\eta)(1+a(1))$. Combining this with \eqref{ineq-a(r)-r-le-eta^k_0/2},
	we deduce that \eqref{ineq-a(r)-r<1} holds true, as desired.\medskip
	
	\noindent{\em Step 2: conclusions.}
	Now, up to scaling and translations, we can extend \eqref{ineq-a(r)-r<1} to all balls with centre $x_0$ in $B_{1/2}$
	and sufficiently small radius. Namely, we have that, for all $r\in(0,\eta]$,
	\begin{equation*}
		a(r,x_0)\le C(M,\eta)(1+a(1)),
	\end{equation*}
	where
	$$ 				a(r,x_0):= \left(\fint_{B_{r}(x_0)}\left|\Hnabla u(x)\right|^p\,dx\right)^{1/p}.$$
	Therefore, according to the Lebesgue Differentiation Theorem (see e.g. \cite{T04}), recalling that $u\in HW^{1,p}(B_1),$ we find that, for almost every $x_0\in B_{1/2}$,
	\begin{align*}
		&\left|\Hnabla u(x_0)\right|=\lim_{r\rightarrow 0}a(r,x_0)\le C(M,\eta)(1+a(1))=C\Big(1+\left\|\Hnabla u\right\|_{L^p(B_1)}\Big),
	\end{align*}
	for some $C>0$,
	which yields
	\begin{equation}\label{first-claim-theorem-Lipsch-contin}
		\left\|\Hnabla u\right\|_{L^{\infty}(B_{1/2})}\le C\Big(1+\left\| \Hnabla u\right\|_{L^p(B_1)}\Big).
	\end{equation}
	So, the first claim in Theorem \ref{theor-Lipsch-contin-alm-minim} is proved.
	
	We now show that the second claim in Theorem \ref{theor-Lipsch-contin-alm-minim}
	holds. For this, we can assume that
	\begin{equation}\label{PROCSH}\{u=0\}\cap B_{s/100}\ne\varnothing\end{equation}
	and we take a Lebesgue point $\bar{x}\in B_{s/100}$ for $\Hnabla u$.
	Up to a left-translation, we can suppose that $\bar{x}=0$ and change our assumption \eqref{PROCSH} into
	\begin{equation}\label{PROCSH2}\{u=0\}\cap B_{s/50}\ne\varnothing.\end{equation}
	We claim that, in this situation,
	\begin{equation}\label{alt-c-PK}
		{\mathcal{K}}=\N.
	\end{equation}
	Indeed, suppose not and let $k_0$ as above (recall \eqref{prop-k0+1}).
	Then, in light of \eqref{second-alternative-dichotomy-B_eta^k},
	we can apply Corollary \ref{corollary-lemma-second-alternative-dichotomy-improved} (rescaled as before)
	and conclude, by \eqref{rmk-coroll-assurdo}, that $u>0$
	in $B_{s/2}$, in contradiction with our hypothesis in \eqref{PROCSH2}.
	This establishes \eqref{alt-c-PK}.
	
	Therefore, in view of \eqref{ineq-a(eta^k)}  and \eqref{alt-c-PK}, for all $k\in\N$,
	$$	a(\eta^k)\le C(\eta)M+2^{-k}a(1),$$
	and as a result
	$$ |\Hnabla u(\bar{x})|=|\Hnabla u(0)|=\lim_{k\to+\infty}a(\eta^k)\le
	\lim_{k\to+\infty}\big(C(\eta)M+2^{-k}a(1)\big)=C(\eta)M.$$
	Recalling also \eqref{noticeinpartlsworeteuyti8787686},
	the proof of the second claim in Theorem \ref{theor-Lipsch-contin-alm-minim} is thereby complete.
\end{proof}

\bibliographystyle{alpha}
 	
\end{document}